\newtheorem{theorem}{Theorem}[section]
\newtheorem{lemma}[theorem]{Lemma}
\newtheorem{proposition}[theorem]{Proposition}
\theoremstyle{definition}
\newtheorem{definition}[theorem]{Definition}
\numberwithin{equation}{section}
\begin{document}


\baselineskip=17pt


\title{Sharp Weighted convolution inequalities and some applications}

\author{
WEICHAO GUO\\
School of Mathematics and Information Sciences\\
Guangzhou University\\
Guangzhou, 510006, P.R.China\\
E-mail: weichaoguomath@gmail.com
\and
DASHAN FAN\\
Department of Mathematics\\
University of Wisconsin-Milwaukee\\
Milwaukee, WI 53201, USA\\
E-mail: fan@uwm.edu
\and
HUOXIONG WU\\
School of Mathematical Sciences\\
Xiamen University\\
Xiamen, 361005, P.R.China\\
E-mail: huoxwu@xmu.edu.cn
\and
GUOPING ZHAO\\
School of Applied Mathematics\\
Xiamen University of Technology\\
Xiamen, 361024, P.R.China\\
E-mail: guopingzhaomath@gmail.com
}

\date{}

\maketitle


\renewcommand{\thefootnote}{}

\footnote{2010 \emph{Mathematics Subject Classification}: Primary 42B15; Secondary 42B35.}

\footnote{\emph{Key words and phrases}:
weighted convolution inequalities, fractional integrals, discrete analogue, characterization, modulation spaces.}

\renewcommand{\thefootnote}{\arabic{footnote}}
\setcounter{footnote}{0}


\begin{abstract}
In this paper, the index groups for which the weighted Young's inequalities hold in both continuous case and discrete case are characterized.
As applications, the index groups for the product inequalities on modulation spaces are characterized, we also
obtain the weakest conditions for the boundedness of bilinear Fourier multipliers on modulation spaces in some sense.
For the fractional integral operator, the sharp conditions
for the boundedness of power weighted $L^p-L^q$ estimates in both continuous case and discrete case are obtained.
By a unified approach different from others, we complete some previous results which are committed to finding sharp conditions for
some classical inequalities.
\end{abstract}

\section{Introduction}
This paper is devoted to studying some convolution inequalities on weighted Lebesgue spaces, for both discrete and continuous versions,
with the aim of finding the sharp conditions for boundedness about the Young convolution inequality and the fractional integral operator on these function spaces.

The convolution of two measurable functions on $\mathbb{R}^n$ is defined by
\begin{equation*}
(f\ast g)(x)=\int_{\mathbb{R}^n}f(x-y)g(y)dy,~x\in\mathbb{R}^n.
\end{equation*}
If $1\leq p,q,r \leq \infty$ and $1+1/q=1/p+1/r$, then it is well known that the classical Young inequality
\begin{equation}\label{Introduction 1}
\|f\ast g\|_{L^q}\leq \|f\|_{L^p}\|g\|_{L^r}
\end{equation}
plays a fundamental role in studying the convolution operator.

In this paper, we adopt the notation
$L^p\ast L^r\subset L^q$ to denote (\ref{Introduction 1}), for the sake of simplicity.
More generally, for function spaces $X,Y$ and $Z$,
the expression of form $X\ast Y\subset Z$ means that whenever $f\in X$, $g\in Y$, then
$f\ast g\in Z$ and
\begin{equation}\label{Introduction 2}
\|f\ast g\|_Z\lesssim \|f\|_X\|g\|_Y.
\end{equation}
Inequalities of the form (\ref{Introduction 2}) are usually called the Young-type (convolution) inequalities.

In this paper, we will focus on the Young-type inequalities on the power weighted Lebesgue spaces.
Let \ $s$ be a real number and $1\leq p\leq \infty.$
We use $\mathbb{L}(p,s)$ to denote the weighted $L^p$ Lebesgue space with power weight $|x|^s$,
and $L(p,s)$ to denote the weighted $L^p$ Lebesgue space with power weight (without a singularity at the origin) $\langle x \rangle^s=(1+|x|^2)^{s/2}$.
Also, the space $l(p,s)$ denotes the discrete counterpart of $L(p,s)$.
From a technical point of view, if we ignore the possible singularity of the weight $|x|^s$ at the origin
as in Proposition \ref{Implication method}, $l(p,s)$ can be also regarded as the discrete counterpart of $\mathbb{L}(p,s)$.
In fact, the relationship between $l(p,s)$ and $\mathbb{L}(p,s)$ are quite important for our proof.
Since \ $L(p,0)=$ $\mathbb{L}(p,0)=L^{p},$ one easily expects an
immediate extension of the classical Young inequality: the inclusion
\begin{equation*}
\mathbb{L}(q_{1},s_{1})\ast \mathbb{L}(q_{2},s_{2})\subset \mathbb{L}(q,s)
\end{equation*}
holds for appropriate indices $q_{1},q_{2},q$ and $s_{1},s_{2},s.$\ To this
end, finding sharp conditions on these indices to ensure the Young
inequality
\begin{equation*}
\Vert f\ast g\Vert _{\mathbb{L}(q,s)}\precsim \Vert f\Vert _{\mathbb{L}
(q_{1},s_{1})}\Vert g\Vert _{\mathbb{L}(q_{2},s_{2})}
\end{equation*}
is considerable and interesting, and this inequality and its varieties might
play a pivotal role when we study the convolution operators in the weighted
Lebesgue spaces. This problem of course motivated a lot of research works.
In the following we briefly review the historical development, by listing a
few of research articles related to the topic in this paper.

The study of Young's inequality on the spaces $\mathbb{L}(p,s)$ can be dated
back as early as thirty years ago. In 1983, Kerman obtained the following
theorem.
\\
\textbf{Theorem A} (Kerman \cite{Kerman}).\label{Kerman's result}
Let $1\leq q,q_1,q_2\leq \infty$, $s,s_1,s_2\in \mathbb{R}.$
Suppose that $(\mathbf{q},\mathbf{s})=(q, q_1, q_2, s, s_1, s_2)$ satisfies
\begin{equation*}
\begin{cases}
s\leq s_1,~s\leq s_2,~0\leq s_1+s_2,\\
1+\frac{1}{q}+\frac{s}{n}=\frac{1}{q_1}+\frac{s_1}{n}+\frac{1}{q_2}+\frac{s_2}{n},~\frac{1}{q}\leq \frac{1}{q_1}+\frac{1}{q_2},\\
\frac{1}{q}+\frac{s}{n}<\frac{1}{q_1}+\frac{s_1}{n},~\frac{1}{q}+\frac{s}{n}<\frac{1}{q_2}+\frac{s_2}{n},~\frac{1}{q}+\frac{s}{n}>0,\\
q\neq \infty,~q_1,q_2\neq 1.
\end{cases}
\end{equation*}
Then
\begin{equation}\label{Introduction 3}
\mathbb{L}(q_1, s_1)\ast \mathbb{L}(q_2, s_2) \subset \mathbb{L}(q, s).
\end{equation}

The above convolution inequality was also studied by Bui \cite{Bui}, among other
authors. Further weighted inequalities for convolutions can be found in
\cite{Biswas_Swanson, Kerman_Sawyer, NT_jfaa}.
In \cite{Bui}, Bui obtained some necessary conditions
for the inclusion (\ref{Introduction 3}). However, these necessary conditions
are not matched the sufficient conditions in Theorem A. Bui thus posed the
question for finding the sharp conditions on (\ref{Introduction 3}). This
question was solved just very recently by Nursultanov and Tikhonov \cite{NT}
in the ranges $1<q_{1},q_{2}<\infty $ and $1<q\leq \infty $, but with an
extra assumption ${1}/{q}\leq {1}/{q_{1}}+{1}/{q_{2}}$. We note
that the result of Nursultanov and Tikhonov does not imply the end point
cases $q_{1},\,q_{2}=1,\infty $ \ and $q=1,$ while these cases sometime are
notably important in applications. Also, the extra condition  ${1}/{q}
\leq {1}/{q_{1}}+{1}/{q_{2}}$ \ seems little odd. Therefore, based
on these observations, in this paper we will give a complete answer to Bui's
question by establishing sharp (sufficient and necessary) conditions of (\ref{Introduction 3}) in the full ranges $1\leq q_{1},q_{2}\leq \infty $ and $%
1\leq q\leq \infty $. More significantly, our result removes the extra
assumption ${1}/{q}\leq {1}/{q_{1}}+{1}/{q_{2}}$ (this condition
actually, in many cases, is implicitly contained in the necessary
conditions).

Since the method used by Nursultanov and Tikhonov is based on an extra
assumption and it also raises some difficulties to treat the end point
cases, in this paper we will use a quite different approach. We first study
the convolution inequalities in the discrete weighted Lebesgue spaces $%
l(q,s) $. Then we reduce the continuous case to the discrete one to reach
our target. On the other hand, we find that the convolution inequalities in
the discrete case itself is of interest. We will show that the discrete form
of weighted convolution inequality not only has a closed relation to its continuous
counterpart, but also is a powerful tool to study the algebraic property of
the modulation spaces (see Theorem 1.4).

We also notice that a recent paper \cite{Toft} also addresses the Young inequality on the spaces  $L(p,s).$
The authors establish some sufficient conditions on  $L(q_1, s_1)\ast L(q_2, s_2) \subset L(q, s)$.
They also find some partial necessary conditions. However, there is a big distance between sufficiency and necessity.
Again, their methods are different from ours.

As a conclusion, in the full range $1\leq q,\,q_{1},\,q_{2}\leq \infty $, using
different methods from others we will find the sharp conditions for the
convolution inequalities in both discrete and continuous weighted Lebesgue
spaces.

Let us first list 4 important relations among the indices $q,\,q_{1},\,q_{2},$
and $\ s,\,s_{1},\,s_{2}.$

\begin{eqnarray}
&&(\mathcal {A}_1)\begin{cases}
s\leq s_1,~s\leq s_2,~0\leq s_1+s_2,\\
1+\Big(\frac{1}{q}+\frac{s}{n}\Big)\vee 0<\Big(\frac{1}{q_1}+\frac{s_1}{n}\Big)\vee 0+\Big(\frac{1}{q_2}+\frac{s_2}{n}\Big)\vee 0,\\
\frac{1}{q}+\frac{s}{n}\leq \frac{1}{q_1}+\frac{s_1}{n},~\frac{1}{q}+\frac{s}{n}\leq \frac{1}{q_2}+\frac{s_2}{n},
1\leq \frac{1}{q_1}+\frac{s_1}{n}+\frac{1}{q_2}+\frac{s_2}{n},\\
(q,s)=(q_1,s_1) ~\text{if}~ \frac{1}{q}+\frac{s}{n}=\frac{1}{q_1}+\frac{s_1}{n}, \\
(q,s)=(q_2,s_2) ~\text{if}~ \frac{1}{q}+\frac{s}{n}=\frac{1}{q_2}+\frac{s_2}{n},\\
(q'_1, -s_1)=(q_2,s_2)~\text{if}~ 1=\frac{1}{q_1}+\frac{s_1}{n}+\frac{1}{q_2}+\frac{s_2}{n};
\end{cases}
\\
&&(\mathcal {A}_2)\begin{cases}
s=s_1=s_2=0,\\
q=q_1, q_2=1~or~q=q_2, q_1=1~or~q=\infty, \frac{1}{q_1}+\frac{1}{q_2}=1;
\end{cases}
\\
&&(\mathcal {A}_3)\begin{cases}
s\leq s_1,~s\leq s_2,\\
\frac{1}{q_1}+\frac{1}{q_2}=1,~s_1+s_2=0,\\
\frac{1}{q}+\frac{s}{n}<0\leq \frac{1}{q_1}+\frac{s_1}{n},\frac{1}{q_2}+\frac{s_2}{n};
\end{cases}
\end{eqnarray}
\begin{eqnarray}
&&(\mathcal {A}_4)\begin{cases}
s\leq s_1,~s\leq s_2,~0\leq s_1+s_2,\\
1+\frac{1}{q}+\frac{s}{n}=\frac{1}{q_1}+\frac{s_1}{n}+\frac{1}{q_2}+\frac{s_2}{n},~\frac{1}{q}\leq \frac{1}{q_1}+\frac{1}{q_2},\\
\frac{1}{q}+\frac{s}{n}<\frac{1}{q_1}+\frac{s_1}{n},~\frac{1}{q}+\frac{s}{n}<\frac{1}{q_2}+\frac{s_2}{n},~\frac{1}{q}+\frac{s}{n}>0,\\
q\neq \infty,~q_1,q_2\neq 1,~\text{if}~s=s_1~or~s=s_2.
\end{cases}
\end{eqnarray}
Throughout this paper, we use $p'$ to denote the dual index of $p$ such that $1/p+1/p'=1$, and
use the notation \
\begin{equation*}
a\vee b=\max \{a,b\}.
\end{equation*}

\bigskip

Now, we state our main results associated with convolution inequalities on
weighted Lebesgue spaces.

\begin{theorem}[Young's inequality, discrete form, weight $\langle k\rangle$] \label{Sharpness of weighted Young's inequality, discrete form}
Suppose $1\leq q, q_1, q_2\leq \infty$, $s, s_1, s_2\in \mathbb{R}$. Then
\begin{equation}\label{Theorem 1, conclusion}
l(q_1, s_1)\ast l(q_2, s_2) \subset l(q, s)
\end{equation}
if and only if $(\mathbf{q},\mathbf{s})=(q, q_1, q_2, s, s_1, s_2)$ satisfies one of the conditions $\mathcal {A}_i$, $i=1,2,3,4$.
\end{theorem}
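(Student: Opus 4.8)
The plan is to prove the two implications separately, with the forward direction (necessity) coming from testing the inequality against suitable model functions, and the reverse direction (sufficiency) being the substantial half.

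\medskip

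\textbf{Necessity.} Suppose (\ref{Theorem 1, conclusion}) holds. I would extract the constraints by plugging in carefully chosen test sequences. The natural candidates are: (i) $f=g=\delta_0$ (the unit mass at the origin), which forces $\|{\delta_0}\|_{l(q,s)}\lesssim 1$ automatically but, more usefully, taking $f=\delta_0$ and $g$ arbitrary gives $l(q_2,s_2)\subset l(q,s)$, hence $s\le s_2$ together with a relation between $q$ and $q_2$; symmetrically $s\le s_1$; (ii) characteristic functions of balls $B_R=\{|k|\le R\}$, letting $R\to\infty$ and $R\to 0^+$ (i.e. $R=1$ in the discrete case, so really just large $R$), whose convolution is comparable to $R^n\chi_{B_{2R}}$ up to lower order; comparing the weighted norms as $R\to\infty$ yields the Lebesgue-exponent scaling identity, namely the relation $1+(1/q+s/n)\vee 0 \le (1/q_1+s_1/n)\vee 0 + (1/q_2+s_2/n)\vee 0$ and its boundary refinements; (iii) the dual/translation test $f=\chi_{B_R}$, $g=$ translate of $\chi_{B_R}$ pushed to a far lattice point $k_0$ with $|k_0|\sim N$, which produces a bump of mass $R^n$ near $k_0$ and, after sending $N\to\infty$, forces $1\le 1/q_1+s_1/n+1/q_2+s_2/n$; (iv) geometric-type sequences $f(k)=\langle k\rangle^{-a}$ on annuli to detect the strict-inequality conditions and the rigidity cases $(q,s)=(q_i,s_i)$. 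Assembling these, and separating according to whether the various sums are $>1$, $=1$, or $<1$ and whether $1/q+s/n$ is positive, zero, or negative, reproduces exactly the case division into $\mathcal{A}_1,\dots,\mathcal{A}_4$. The bookkeeping here is the delicate part of necessity: one must check that no extra constraint is produced and that the endpoint/rigidity clauses are genuinely forced.

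\medskip

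\textbf{Sufficiency.} This is where the main work lies. Assume $(\mathbf{q},\mathbf{s})$ satisfies one of $\mathcal{A}_1,\dots,\mathcal{A}_4$ and prove (\ref{Theorem 1, conclusion}). I would handle the four cases essentially by reduction to classical unweighted discrete Young (and its endpoint variants $l^p\ast l^1\subset l^p$, $l^p\ast l^{p'}\subset l^\infty$) after a dyadic decomposition of the lattice. Write $f=\sum_{j\ge 0}f_j$, $g=\sum_{k\ge 0}g_k$ where $f_j$ is supported on the annulus $\{2^{j-1}\le |k|<2^j\}$ (and $f_0$ on the unit ball), so that on the support of $f_j$ the weight $\langle k\rangle^{s_1}$ is comparable to $2^{js_1}$. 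The convolution $f_j\ast g_k$ is supported where $|m|\lesssim 2^{\max(j,k)}$, and on that set $\langle m\rangle^s\lesssim 2^{s^+\max(j,k)}$ roughly. Applying unweighted Young $l^{q_1}\ast l^{q_2}\subset l^{r}$ (with $1/r = 1/q_1+1/q_2-1$ when this is $\le 1$, and the endpoint inclusions otherwise) to each piece, and summing the resulting geometric series in $j$ and $k$, gives the claim; the conditions $\mathcal{A}_1$ (strict inequality $1+(\cdots)\vee 0 < (\cdots)\vee 0 + (\cdots)\vee 0$) are precisely what make the double geometric sum converge, while the three rigidity clauses in $\mathcal{A}_1$ — $(q,s)=(q_i,s_i)$ when the corresponding exponents match, and $(q_1',-s_1)=(q_2,s_2)$ in the $l^{q_1}\ast l^{q_1'}\subset l^\infty$ regime — are exactly the situations where the geometric sum has ratio $1$ and one instead sums by a different endpoint inequality (Hölder, or the $l^\infty$ endpoint) that does not lose anything. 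Cases $\mathcal{A}_2$ and $\mathcal{A}_3$ are the purely unweighted or weight-balanced endpoint configurations and follow directly from the classical endpoint Young/Hölder inequalities. Case $\mathcal{A}_4$ is the ``equality'' analogue of $\mathcal{A}_1$ (with the scaling identity holding with equality rather than strict inequality), which one cannot reach by summing geometric series with ratio $<1$; here I would instead interpolate — $\mathcal{A}_4$ lies on the boundary segment whose relative interior is covered by $\mathcal{A}_1$, so a complex-interpolation or real-interpolation argument between two $\mathcal{A}_1$ points, combined with the endpoint inputs, closes it, subject to the side condition $q\ne\infty$, $q_1,q_2\ne 1$ when $s=s_1$ or $s=s_2$ which ensures the interpolation endpoints are themselves admissible.

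\medskip

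\textbf{Main obstacle.} The hard part is the sufficiency proof in the critical/rigid configurations: when one or more of the quantities $1/q+s/n$, $1/q_i+s_i/n$ coincide or the sum $1/q_1+s_1/n+1/q_2+s_2/n$ equals $1$, the crude dyadic-plus-geometric-series argument fails because the series no longer converges, and one must recognize the precise endpoint convolution inequality (typically $l^{p}\ast l^1\subset l^p$ or $l^p\ast l^{p'}\subset l^\infty$ with an $\ell^1$-in-$j$ or $\ell^\infty$-in-$j$ organization of the dyadic blocks) that saves the day — this is exactly the content of the rigidity clauses in $\mathcal{A}_1$ and the separate listing of $\mathcal{A}_2$, $\mathcal{A}_3$. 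Getting the book-keeping of these endpoint cases to mesh seamlessly with the generic case, and doing the boundary case $\mathcal{A}_4$ by interpolation while respecting its side conditions, is the technically demanding core of the argument.
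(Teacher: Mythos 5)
Your plan fails at the case $\mathcal{A}_4$, which is the heart of the theorem. You propose to obtain it by real or complex interpolation ``between two $\mathcal{A}_1$ points''. This cannot work: under any interpolation of weighted $l^p$ spaces with change of power weights, the parameters $(1/q,s,1/q_1,s_1,1/q_2,s_2)$ move affinely in the interpolation parameter, so the functional $F=\frac{1}{q_1}+\frac{s_1}{n}+\frac{1}{q_2}+\frac{s_2}{n}-1-\frac{1}{q}-\frac{s}{n}$ at the interpolated triple is a convex combination of its values at the endpoints. In the regime relevant to $\mathcal{A}_4$ (all of $1/q+s/n$, $1/q_i+s_i/n$ positive) every $\mathcal{A}_1$ triple has $F>0$, while every $\mathcal{A}_4$ triple has $F=0$; a convex combination of strictly positive numbers is strictly positive, and triples with $F<0$ are not admissible at all, so no $\mathcal{A}_4$ configuration is reachable by interpolating admissible ones off the critical hyperplane. $\mathcal{A}_4$ is exactly the scaling-critical Kerman/Stein--Weiss-type case; the paper handles it by importing Kerman's continuous theorem (Theorem A) and transferring it to the lattice via the implication method (Proposition \ref{Implication method}), then treating the endpoint subcases $q=\infty$ (directly, by H\"older plus the weight-integrability estimate of Proposition \ref{Integral capability of weight, discrete form}) and $q_1=1$ or $q_2=1$ (by duality). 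Your proposal contains no substitute for this critical-exponent input; interpolation along the critical hyperplane between endpoint estimates could in principle work, but those endpoint estimates are themselves the nontrivial bounds above and would first have to be proved.

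There are secondary gaps as well. The claim that $\mathcal{A}_3$ ``follows directly from classical endpoint Young/H\"older'' is false as stated: the unweighted bound $\|f\ast g\|_{l^\infty}\lesssim\|f\|_{l(q_1,s_1)}\|g\|_{l(q_2,s_2)}$ with $1/q_1+1/q_2=1$, $s_1+s_2=0$ fails (test $g=\delta_{j_0}$, $|j_0|\to\infty$, $s_1>0$); one must genuinely use the decay coming from $1/q+s/n<0$, and the clean route is the paper's dual reduction of $\mathcal{A}_3$ to the $\mathcal{A}_1$ machinery. In the necessity part, your test (iii) with a far-translated ball does not yield $1\le\frac{1}{q_1}+\frac{s_1}{n}+\frac{1}{q_2}+\frac{s_2}{n}$, since the output bump sits at $|k|\sim N$ where the weight is $\langle N\rangle^{s}$ and you only recover $s\le s_2$-type information; you need opposite translates so that $f\ast g$ concentrates near the origin, or, as the paper does, a duality argument combined with the sharp embedding proposition, which is also what forces the rigidity clauses $(q,s)=(q_i,s_i)$, $(q_1',-s_1)=(q_2,s_2)$ and, via logarithmic test sequences, the extra necessities $1/q\le 1/q_1+1/q_2$ and $q\ne\infty$, $q_1,q_2\ne1$ when $s=s_1$ or $s=s_2$. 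Your dyadic-annuli scheme for the sufficiency of $\mathcal{A}_1$ is a reasonable alternative to the paper's embedding-plus-reduction argument (Propositions \ref{Sharpness of embedding, discrete form}, \ref{Integral capability of weight, discrete form}, \ref{For reduction, discrete form}), but it remains a sketch whose endpoint bookkeeping (ratio-one geometric series, output annuli near the origin when $s<0$) is exactly where the work lies.
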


\begin{theorem}[Young's inequality, continuous, weight $|x|$] \label{Sharpness of weighted Young's inequality, power weight}
Suppose $1\leq q, q_1, q_2\leq \infty$, $s, s_1, s_2\in \mathbb{R}$. Then
\begin{equation*}
\mathbb{L}(q_1, s_1)\ast \mathbb{L}(q_2, s_2) \subset \mathbb{L}(q, s)
\end{equation*}
if and only if $(\mathbf{q},\mathbf{s})$ satisfies one of conditions $\mathcal {A}_2$ and $\mathcal {A}_4$.
\end{theorem}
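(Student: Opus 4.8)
The plan is to obtain Theorem~\ref{Sharpness of weighted Young's inequality, power weight} from its discrete counterpart, Theorem~\ref{Sharpness of weighted Young's inequality, discrete form}, by separating the behaviour of $\mathbb{L}(p,s)$ away from the origin — where $|x|^s\approx\langle x\rangle^s$ and $\mathbb{L}(p,s)$ is modelled on the lattice space $l(p,s)$, the link being Proposition~\ref{Implication method} — from its behaviour at the origin, where the honestly homogeneous singularity $|x|^s$ takes over. The decisive new feature, compared with the discrete case, is exact homogeneity: applying \eqref{Introduction 2} to the dilates $f(\lambda\,\cdot),\,g(\lambda\,\cdot)$ and letting $\lambda\to0$, $\lambda\to\infty$ forces the \emph{scaling identity} $1+\frac1q+\frac sn=\frac1{q_1}+\frac{s_1}n+\frac1{q_2}+\frac{s_2}n$. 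Writing $\alpha=\frac1q+\frac sn$ and $\beta_i=\frac1{q_i}+\frac{s_i}n$, a short case analysis shows that this identity is incompatible both with the strict inequality $1+\alpha\vee0<\beta_1\vee0+\beta_2\vee0$ of $(\mathcal A_1)$ and with the negativity $\alpha<0$ of $(\mathcal A_3)$; hence, among the four index relations of Theorem~\ref{Sharpness of weighted Young's inequality, discrete form}, only $(\mathcal A_2)$ and $(\mathcal A_4)$ can hold in the continuous setting.

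\emph{Necessity.} The scaling identity is derived as above. By Proposition~\ref{Implication method}, the inclusion $\mathbb{L}(q_1,s_1)\ast\mathbb{L}(q_2,s_2)\subset\mathbb{L}(q,s)$ forces the discrete inclusion $l(q_1,s_1)\ast l(q_2,s_2)\subset l(q,s)$, so Theorem~\ref{Sharpness of weighted Young's inequality, discrete form} gives one of $(\mathcal A_1),\dots,(\mathcal A_4)$; combined with the scaling identity this leaves $(\mathcal A_2)$ or $(\mathcal A_4)$. In the borderline configurations, where one still needs the strict inequalities $\alpha<\beta_i$, the positivity $\alpha>0$, or the exclusion of $q=\infty$ and $q_i=1$ when $s=s_1$ or $s=s_2$, the cleanest direct route is to test \eqref{Introduction 2} against power functions $|x|^{-a}\chi_{\{|x|\le1\}}$ and to observe that their convolution is again comparable to a power function near $0$: the failure of $f\ast g\in\mathbb{L}(q,s)$ for such admissible pairs pins down exactly the forbidden endpoints, the cases $q=\infty$, $q_i=1$ being covered by the classical $L^\infty$ and $L^1$ counterexamples.

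\emph{Sufficiency.} If $(\mathcal A_2)$ holds, then $\mathbb{L}(q_i,s_i)=L^{q_i}$ and the statement is the classical Young inequality \eqref{Introduction 1} together with Hölder's inequality. Suppose $(\mathcal A_4)$ holds. Decompose $\mathbb R^n$ into the dyadic annuli $A_j=\{2^j\le|x|<2^{j+1}\}$, $j\in\mathbb Z$, and split $f=\sum_j f\chi_{A_j}$, $g=\sum_j g\chi_{A_j}$; on $A_j$ one has $|x|^s\approx2^{js}$, while $(f\chi_{A_{j_1}})\ast(g\chi_{A_{j_2}})$ is supported in $A_{j_1}+A_{j_2}$, which meets $A_j$ only for $j\le\max(j_1,j_2)+C$, and, when $|j_1-j_2|>C$, only for $|j-\max(j_1,j_2)|\le C$. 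Estimating each interaction by Young's inequality on one annulus followed by Hölder's inequality on $A_j$ (of measure $\approx2^{nj}$) and inserting the weights $2^{js}$ reduces the sum over $j\in\mathbb Z$ to a one-dimensional weighted convolution inequality, which falls under Theorem~\ref{Sharpness of weighted Young's inequality, discrete form} precisely because of the scaling identity and the relations $\alpha\le\beta_i$, $1\le\beta_1+\beta_2$ in $(\mathcal A_4)$. The genuinely singular contribution is the estimate for $f,g$ supported in $\{|x|\le1\}$: exact homogeneity makes it suffice to treat one bounded model annulus, where the strict inequalities, $\alpha>0$ and the endpoint hypotheses of $(\mathcal A_4)$ enter through interpolation between Young's inequality and the Hölder- and Hardy-type bounds needed at the borderline.

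\emph{Main obstacle.} The hard part is this last, singular, estimate — showing that the extra restrictions carried by $(\mathcal A_4)$, and absent from $(\mathcal A_1)$, are exactly what is needed to absorb the homogeneous singularity, and in particular clarifying why strictness of $\alpha<\beta_i$ is required only when $s$ equals one of $s_1,s_2$. This is the one place where $\mathbb{L}(p,s)$ genuinely parts company with both the lattice space $l(p,s)$ and the non-singular weighted space $L(p,s)$, so the matching counterexamples (necessity) and positive bounds (sufficiency) must be built by hand, exploiting the dilation structure.
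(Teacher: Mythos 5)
Your necessity argument is sound and is essentially the paper's: a dilation argument yields the scaling identity, Proposition \ref{Implication method} together with Theorem \ref{Sharpness of weighted Young's inequality, discrete form} places $(\mathbf{q},\mathbf{s})$ in $\bigcup_{i=1}^4 A_i$, and the scaling identity (combined with the inequalities $\alpha\leq\beta_1,\beta_2$ and $1\leq\beta_1+\beta_2$ that are built into $\mathcal{A}_1$ and $\mathcal{A}_3$, where $\alpha=\frac1q+\frac sn$, $\beta_i=\frac1{q_i}+\frac{s_i}{n}$) rules out $\mathcal{A}_1$ and $\mathcal{A}_3$; your extra tests with $|x|^{-a}\chi_{\{|x|\le1\}}$ are redundant but harmless, since the borderline restrictions are already encoded in the discrete conditions. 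The genuine gap is in the sufficiency for $\mathcal{A}_4$.

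Two steps there do not work as stated. First, after the dyadic annular decomposition the scale variable $j$ carries \emph{geometric} weights $2^{js}$, $2^{j_1s_1}$, $2^{j_2s_2}$, so the sequence inequality you arrive at is not an instance of Theorem \ref{Sharpness of weighted Young's inequality, discrete form}, whose weights are the polynomial ones $\langle k\rangle^{s}$ on $\mathbb{Z}^n$; what is actually required is a bilinear estimate for exponentially weighted sequences, and proving it (interaction analysis plus summation of geometric series, which is exactly where the strict inequalities $\alpha<\beta_i$ and $\alpha>0$ of $\mathcal{A}_4$ enter) is the substance of the Kerman/Stein--Weiss-type theorem; it cannot be obtained by citing the $l(q,s)$ result. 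Second, the near-origin contribution cannot be reduced ``by homogeneity'' to a single bounded model annulus: the convolution couples different dyadic scales, so a one-annulus estimate does not self-improve to the global bound without again controlling cross-scale interactions; moreover the endpoint configurations of $\mathcal{A}_4$ ($q=\infty$, or $q_1=1$, or $q_2=1$, which occur only when $s<s_1$ and $s<s_2$) are extreme points and cannot be reached ``through interpolation''---they must be proved directly. The paper does exactly this: the non-endpoint range is quoted from Kerman's Theorem A, the case $q=\infty$ is handled by H\"older's inequality together with the kernel bound $\big\||x|^{s}|x-y|^{-s_1}|y|^{-s_2}\big\|_{L_y^{n/(s_1+s_2-s)}}\lesssim 1$ of Proposition \ref{Integral capability of weight, power weight}, and the cases $q_1=1$ or $q_2=1$ follow by duality. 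As written, the step you yourself identify as the main obstacle is left open, so the sufficiency half of the theorem is not established.
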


\begin{theorem}[Young's inequality, continuous, weight $\langle x\rangle$] \label{Sharpness of weighted Young's inequality, continuous form}
Suppose $1\leq q, q_1, q_2\leq \infty$, $s, s_1, s_2\in \mathbb{R}$. Then
\begin{equation*}
L(q_1, s_1)\ast L(q_2, s_2) \subset L(q, s)
\end{equation*}
if and only if $(\mathbf{q},\mathbf{s})$ satisfies
\begin{equation*}
1+\frac{1}{q}\geq \frac{1}{q_1}+\frac{1}{q_2}
\end{equation*}
and one of the conditions $\mathcal {A}_i$, $i=1,2,3,4$.
\end{theorem}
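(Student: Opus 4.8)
The plan is to transfer the already-established discrete characterisation of Theorem~\ref{Sharpness of weighted Young's inequality, discrete form} to the non-singular continuous weight $\langle x\rangle^{s}$ by a decomposition into unit cubes (in the spirit of Proposition~\ref{Implication method}), the key observation being that $\langle x\rangle^{t}\sim\langle k\rangle^{t}$ for $x$ in the cube $Q_{k}:=k+[0,1)^{n}$ and that $\langle x\rangle^{t}$ carries no singularity, so that the passage from $\mathbb{Z}^{n}$ to $\mathbb{R}^{n}$ introduces exactly one new feature: the \emph{local} interaction of $f$ and $g$ over a common cube, which is governed precisely by the extra condition $1+1/q\ge 1/q_{1}+1/q_{2}$. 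I therefore expect the $\langle x\rangle^{s}$-inclusion to factor as ``discrete part $+$ local part''.

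\emph{Sufficiency.} Assume $1+1/q\ge 1/q_{1}+1/q_{2}$ and that $(\mathbf q,\mathbf s)$ satisfies some $\mathcal A_{i}$. Write $f=\sum_{k}f_{k}$, $g=\sum_{j}g_{j}$ with $f_{k}=f\chi_{Q_{k}}$, $g_{j}=g\chi_{Q_{j}}$. First, since $f_{k}\ast g_{j}$ is supported in $(k+j)+[0,2)^{n}$, the classical Young inequality on $\mathbb R^{n}$ followed by the embedding $L^{r}\hookrightarrow L^{q}$ on a bounded set, where $1/r=(1/q_{1}+1/q_{2}-1)\vee 0$ (this embedding is the one and only place where $1+1/q\ge 1/q_{1}+1/q_{2}$ enters), gives $\|f_{k}\ast g_{j}\|_{L^{q}}\lesssim\|f_{k}\|_{L^{q_{1}}}\|g_{j}\|_{L^{q_{2}}}$. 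Second, with $A_{k}=\|f_{k}\|_{L^{q_{1}}}$, $B_{j}=\|g_{j}\|_{L^{q_{2}}}$ and $C_{m}=\|(f\ast g)\chi_{Q_{m}}\|_{L^{q}}$, the local comparability of the weight yields $\|f\|_{L(q_{1},s_{1})}\sim\|A\|_{l(q_{1},s_{1})}$, $\|g\|_{L(q_{2},s_{2})}\sim\|B\|_{l(q_{2},s_{2})}$ and $\|f\ast g\|_{L(q,s)}\sim\|C\|_{l(q,s)}$, while the bounded overlap of the sets $(k+j)+[0,2)^{n}$ gives the pointwise bound $C_{m}\lesssim\sum_{\ell\in\{-1,0,1\}^{n}}(A\ast B)_{m-\ell}$. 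Applying Theorem~\ref{Sharpness of weighted Young's inequality, discrete form} (legitimate since some $\mathcal A_{i}$ holds) together with the fact that a bounded shift preserves $l(q,s)$ up to a constant, we obtain $\|C\|_{l(q,s)}\lesssim\|A\ast B\|_{l(q,s)}\lesssim\|A\|_{l(q_{1},s_{1})}\|B\|_{l(q_{2},s_{2})}$, and chaining the estimates proves the inclusion.

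\emph{Necessity.} Assume $L(q_{1},s_{1})\ast L(q_{2},s_{2})\subset L(q,s)$. To recover some $\mathcal A_{i}$, run the transference backwards: fix $0\le\varphi\in C_{c}$ supported in $[0,\tfrac12)^{n}$ with $\int\varphi>0$, and for finitely supported $A,B\ge 0$ set $f=\sum_{k}A_{k}\varphi(\cdot-k)$, $g=\sum_{j}B_{j}\varphi(\cdot-j)$; then $\varphi\ast\varphi$ is supported in $[0,1)^{n}$, the cubes $Q_{m}$ separate the terms, so $(f\ast g)(x)=(A\ast B)_{m}\,(\varphi\ast\varphi)(x-m)$ for $x\in Q_{m}$, and computing the $L(q,s)$, $L(q_{1},s_{1})$, $L(q_{2},s_{2})$ norms cube by cube gives $\|A\ast B\|_{l(q,s)}\sim\|f\ast g\|_{L(q,s)}\lesssim\|f\|_{L(q_{1},s_{1})}\|g\|_{L(q_{2},s_{2})}\sim\|A\|_{l(q_{1},s_{1})}\|B\|_{l(q_{2},s_{2})}$, whence $l(q_{1},s_{1})\ast l(q_{2},s_{2})\subset l(q,s)$ and Theorem~\ref{Sharpness of weighted Young's inequality, discrete form} forces some $\mathcal A_{i}$. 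To recover $1+1/q\ge 1/q_{1}+1/q_{2}$, test with $f=g=\chi_{B(0,\varepsilon)}$: the weights being $\sim 1$ near the origin, $\|f\|_{L(q_{1},s_{1})}\sim\varepsilon^{n/q_{1}}$ and $\|g\|_{L(q_{2},s_{2})}\sim\varepsilon^{n/q_{2}}$, while $(f\ast g)(x)\gtrsim\varepsilon^{n}$ for $|x|\le\varepsilon/2$ yields $\|f\ast g\|_{L(q,s)}\gtrsim\varepsilon^{n+n/q}$; letting $\varepsilon\to 0$ in $\varepsilon^{n+n/q}\lesssim\varepsilon^{n/q_{1}+n/q_{2}}$ forces $n+n/q\ge n/q_{1}+n/q_{2}$ (with the usual conventions when an index is $\infty$).

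\emph{Main obstacle.} The substantive work is the careful check that the two mechanisms genuinely decouple — that passing through unit cubes creates no hidden restriction on the $\mathcal A_{i}$ beyond the single extra inequality $1+1/q\ge 1/q_{1}+1/q_{2}$ we have isolated — and the bookkeeping of the numerous degenerate configurations allowed by the $\mathcal A_{i}$ (indices equal to $1$ or $\infty$, or lying on the critical hyperplanes $1/q+s/n=1/q_{i}+s_{i}/n$ and $1/q_{1}+s_{1}/n+1/q_{2}+s_{2}/n=1$). In the necessity direction one must ensure that the chosen test functions (lattice bumps for the $\mathcal A_{i}$ part, shrinking balls for the local part, together with their obvious variants) are sharp enough to detect the failure of the continuous inclusion in each such borderline case; this is exactly the point at which Theorem~\ref{Sharpness of weighted Young's inequality, discrete form} has already done the analogous (and harder) work on $\mathbb{Z}^{n}$ for us.
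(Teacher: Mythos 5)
Your argument is correct, and while your necessity step essentially coincides with the paper's, your sufficiency step takes a genuinely different route. For necessity, your shrinking-ball test $f=g=\chi_{B(0,a)}$ is exactly Proposition \ref{Continuous weighted Young's inequality, near the origin}, and your lattice-bump transference showing that the continuous inclusion implies $l(q_1,s_1)\ast l(q_2,s_2)\subset l(q,s)$ is the ``implication method'' that the paper itself names as one way to prove Proposition \ref{Relationship between discrete weighted Young's inequality and continuous weighted Young's inequality} (it then actually writes a different proof via the short-time Fourier transform and Proposition \ref{Relationship between the product on modulation spaces and the discrete weighted Young's inequality}); your direct version is simpler because $\langle x\rangle^{s}$ has no singularity at the origin. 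The divergence is in sufficiency: the paper proves $\bigcup_{i}B_i\subset B$ case by case, using the continuous embedding Proposition \ref{Sharpness of embedding, continuous form}, the reduction Proposition \ref{For reduction, continuous form}, duality, the kernel estimate of Proposition \ref{Integral capability of weight, continuous form} at the endpoints $q=\infty$ or $q_i=1$, and, for $B_4$, a splitting of $\|f\ast g\|_{L(q,s)}$ into a region away from the origin handled by Kerman's Theorem A (where $\langle x\rangle\sim|x|$) and a region near the origin reduced to $B_1$; you instead give one unified transference: localize to unit cubes, prove the local bound $\|f_k\ast g_j\|_{L^q}\lesssim\|f_k\|_{L^{q_1}}\|g_j\|_{L^{q_2}}$ (the sole use of $1+1/q\ge 1/q_1+1/q_2$), and feed the resulting lattice sums into Theorem \ref{Sharpness of weighted Young's inequality, discrete form}. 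This yields a shorter, case-free deduction of the $\langle x\rangle$-weighted continuous theorem directly from the discrete one, and it is not circular, since the paper's proof of Theorem \ref{Sharpness of weighted Young's inequality, discrete form} does not rely on Theorem \ref{Sharpness of weighted Young's inequality, continuous form}; the paper's longer route, in exchange, exhibits the continuous mechanisms (Kerman's inequality, the weight-capability estimates) explicitly. Two harmless points to tidy: when $1/q_1+1/q_2<1$ the exponent $r=\infty$ is not literally covered by Young's inequality, so first lower one exponent by H\"older on the unit cube, $\|g_j\|_{L^{q_1'}}\lesssim\|g_j\|_{L^{q_2}}$ (valid since $q_2\ge q_1'$), and then use $\|f_k\ast g_j\|_{L^\infty}\le\|f_k\|_{L^{q_1}}\|g_j\|_{L^{q_1'}}$; and with your cube convention the shifts in $C_m\lesssim\sum_{\ell}(A\ast B)_{m-\ell}$ run over $\{0,1\}^n$ rather than $\{-1,0,1\}^n$, which changes nothing.
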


As an application, we will study the product inequalities on the modulation
spaces. Then, as a consequence, we obtain an algebraic property for
modulation spaces, while it is known that this algebraic property is a key
issue to study certain nonlinear Cauchy problem of dissipative partial
differential equations on the modulation spaces \cite{WH_JDE_2007}. The
modulation space $M_{p,q}^{s}$ was introduced by Feichtinger \cite%
{Feichtinger} in 1983 by means of the short-time Fourier transform. Another
equivalent definition of $\ M_{p,q}^{s}$ \ can be given by applying the
frequency-uniform localizations (see \cite{WH_JDE_2007} for details). The
interested reader may find a lot of research articles, in the literature,
that address the space $M_{p,q}^{s}$ , as well as its many applications. For
instance, see \cite{Wang_book} for some basic properties of modulation
spaces, \cite{AKKL_JFA_2007, Feichtinger_Narimani} for the study of
boundedness on modulation spaces for certain operators. Particularly, it is
known that the modulation space serves as a good alternative working frame,
in many cases, in the study of partial differential equations, see \cite{Iwabuchi, WH_JDE_2007, RSW_2012}.
The definitions of modulation space will be presented in Section 2, but we would like to give the reader an earlier
notice that Theorem \ref{Sharpness of weighted Young's inequality, discrete
form} is a crucial inequality to obtain the product inequalities on
modulation spaces. More precisely, using Theorem \ref{Sharpness of weighted Young's
inequality, discrete form} we will establish the following algebraic
property of the modulation spaces.

\begin{theorem}[Product on modulation spaces] \label{Sharpness of product on modulation spaces, bilinear case}
Suppose $1\leq p, p_1, p_2\leq \infty$, $1\leq q, q_1, q_2\leq \infty$, $s, s_1, s_2\in \mathbb{R}$. Then
\begin{equation*}
\|fg\|_{M_{p,q}^s}\lesssim \|f\|_{M_{p_1,q_1}^{s_1}}\|g\|_{M_{p_2,q_2}^{s_2}}
\end{equation*}
holds for all $f,g\in \mathscr {S}$ if and only if
$1/p\leq 1/p_1+1/p_2$ and $(\mathbf{q},\mathbf{s})$ satisfies one of the conditions $\mathcal {A}_i$, $i=1,2,3,4$.
\end{theorem}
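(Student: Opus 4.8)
The plan is to transfer the statement, in both directions, to the weighted discrete convolution inequality of Theorem~\ref{Sharpness of weighted Young's inequality, discrete form} via the frequency-uniform decomposition characterization of the modulation norm. Recall (from the equivalent definition of $M^s_{p,q}$ to be set up in Section~2) that if $\{\Box_k\}_{k\in\mathbb Z^n}$ are the frequency-uniform localization operators attached to a fixed nonnegative partition of unity $\sum_k\sigma(\cdot-k)\equiv1$ with $\operatorname{supp}\sigma$ of bounded size, then $\|f\|_{M^s_{p,q}}\approx\big\|(\|\Box_k f\|_{L^p})_k\big\|_{l(q,s)}$. I will use two elementary facts throughout: each $\Box_k$ is bounded on $L^p$ uniformly in $k$ and in $1\le p\le\infty$ (because $\Box_k f=e^{ik\cdot}(\check\sigma\ast(e^{-ik\cdot}f))$ with $\check\sigma\in L^1$); and for $h$ whose Fourier transform is supported in a cube of bounded size, a Nikolskii-type inequality gives $\|h\|_{L^p}\lesssim\|h\|_{L^{p_0}}$ whenever $p_0\le p$.

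\emph{Sufficiency.} Suppose $1/p\le1/p_1+1/p_2$ and $(\mathbf q,\mathbf s)$ satisfies some $\mathcal A_i$. For $f,g\in\mathscr{S}$ I decompose $fg=\sum_{k_1,k_2}\Box_{k_1}f\cdot\Box_{k_2}g$; since $\Box_{k_1}f\cdot\Box_{k_2}g$ is frequency-supported in $k_1+k_2+[-C,C]^n$, the operator $\Box_k$ annihilates it unless $|k-k_1-k_2|_\infty\le C_0$ for an absolute $C_0$. Using the uniform $L^p$-boundedness of $\Box_k$, then the Nikolskii estimate with $1/p_0:=1/p_1+1/p_2$ (so $p_0\le p$), then Hölder, I obtain $\|\Box_k(fg)\|_{L^p}\lesssim\sum_{|j|_\infty\le C_0}(a\ast b)_{k-j}$, where $a_{k_1}:=\|\Box_{k_1}f\|_{L^{p_1}}$ and $b_{k_2}:=\|\Box_{k_2}g\|_{L^{p_2}}$. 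Passing to the $l(q,s)$-norm, absorbing the bounded shifts via $\langle k\rangle^s\approx\langle k-j\rangle^s$ for $|j|_\infty\le C_0$, and invoking Theorem~\ref{Sharpness of weighted Young's inequality, discrete form} (applicable since $\mathcal A_i$ holds) yields $\|fg\|_{M^s_{p,q}}\lesssim\|a\ast b\|_{l(q,s)}\lesssim\|a\|_{l(q_1,s_1)}\|b\|_{l(q_2,s_2)}\approx\|f\|_{M^{s_1}_{p_1,q_1}}\|g\|_{M^{s_2}_{p_2,q_2}}$.

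\emph{Necessity.} Fix a Schwartz $\psi$ with $\psi(0)>0$ and $\widehat\psi\ge0$, whose frequency support is so small that $\widehat{\psi^2}=\widehat\psi\ast\widehat\psi$ still lives where $\sigma\ge c>0$ (e.g.\ $\psi=c_0\eta^2$ with $\eta=\mathcal{F}^{-1}\chi$, $\chi\ge0$ a narrow bump). For finitely supported nonnegative sequences $(a_k),(b_k)$ set $f=\sum_k a_k e^{ik\cdot}\psi$ and $g=\sum_k b_k e^{ik\cdot}\psi$ in $\mathscr{S}$, so $fg=\sum_m(a\ast b)_m e^{im\cdot}\psi^2$ and every Fourier transform occurring is nonnegative. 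Writing $\Box_m f=e^{im\cdot}G_m$ with $\widehat{G_m}\ge0$ supported in a fixed cube, nonnegativity gives $\widehat{G_m}\gtrsim a_m\widehat\psi$, hence $G_m(0)=\int\widehat{G_m}\gtrsim a_m$; combined with $\|G_m\|_{L^{p_1}}\gtrsim G_m(0)$ (which follows from $\|G_m\|_{L^\infty}\approx G_m(0)$ and $G_m=G_m\ast\Phi$ for a fixed $\Phi\in\mathscr{S}$ with $\widehat\Phi\equiv1$ on that cube) and the trivial bound $\|\Box_m f\|_{L^{p_1}}\lesssim\sum_{|k-m|_\infty\le1}a_k$, this yields $\|f\|_{M^{s_1}_{p_1,q_1}}\approx\|a\|_{l(q_1,s_1)}$, and similarly for $g$; the same reasoning applied to $fg$ gives $\|\Box_m(fg)\|_{L^p}\gtrsim(a\ast b)_m$, hence $\|fg\|_{M^s_{p,q}}\gtrsim\|a\ast b\|_{l(q,s)}$. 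Thus the hypothesis forces $\|a\ast b\|_{l(q,s)}\lesssim\|a\|_{l(q_1,s_1)}\|b\|_{l(q_2,s_2)}$ for all such sequences; since $|a\ast b|\le|a|\ast|b|$ and finitely supported nonnegative sequences suffice (by monotone convergence), $l(q_1,s_1)\ast l(q_2,s_2)\subset l(q,s)$, and Theorem~\ref{Sharpness of weighted Young's inequality, discrete form} yields one of the $\mathcal A_i$. It remains to extract $1/p\le1/p_1+1/p_2$: test $f=g=\eta(\cdot/R)$ as $R\to\infty$, so that for large $R$ the frequency supports collapse to the cube around the origin and $\|f\|_{M^{s_1}_{p_1,q_1}}\approx R^{n/p_1}\|\eta\|_{L^{p_1}}$, $\|g\|_{M^{s_2}_{p_2,q_2}}\approx R^{n/p_2}\|\eta\|_{L^{p_2}}$, $\|fg\|_{M^s_{p,q}}\approx R^{n/p}\|\eta^2\|_{L^p}$, forcing $n/p\le n/p_1+n/p_2$.

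I expect the main obstacle to lie in the necessity part, namely in securing the lower bound $\|\Box_m(fg)\|_{L^p}\gtrsim(a\ast b)_m$ uniformly in $m$: a priori the many terms $a_{k_1}b_{k_2}e^{i(k_1+k_2)\cdot}\psi^2$ with $k_1+k_2=m$ could partially cancel inside a single frequency cube. Choosing $\psi$ (and the partition of unity) so that every Fourier transform in sight is nonnegative is precisely what rules this out and upgrades the automatic upper bounds to matching lower bounds; one also has to keep the sequences finitely supported, via a routine truncation, so that the test functions genuinely lie in $\mathscr{S}$. The sufficiency direction, by contrast, is essentially a bookkeeping exercise once Theorem~\ref{Sharpness of weighted Young's inequality, discrete form} and the frequency-uniform characterization are in hand.
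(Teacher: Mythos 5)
Your proposal is correct and takes essentially the same route as the paper: the paper isolates the reduction to Theorem~\ref{Sharpness of weighted Young's inequality, discrete form} as Proposition~\ref{Relationship between the product on modulation spaces and the discrete weighted Young's inequality}, proving sufficiency by the same almost-orthogonality + Lemma~\ref{embedding of Lp with Fourier compact support} + H\"older chain and necessity with the test functions $f=\sum_k a_k h(x)e^{2\pi i k\cdot x}$ (with $\hat h$ supported in $B(0,1/8)$) together with the dilated bumps $\hat v_\lambda=\hat h(\cdot/\lambda)$, $\lambda\to 0$, which is your $R\to\infty$ scaling in disguise. The only difference is cosmetic: by shrinking the frequency support so that $\sigma_m\equiv 1$ on it, the paper gets $\|\Box_m f\|_{L^{p_1}}\simeq a_m$ and $\|\Box_m(fg)\|_{L^p}\simeq (a\ast b)_m$ exactly, and the cancellation you guard against with $\hat\psi\ge 0$ never arises, since all terms with $k_1+k_2=m$ are nonnegative multiples of one and the same function $e^{2\pi i m\cdot x}\psi^2$.
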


We note that a simpler case of the above theorem was obtained in \cite
{Cordero_Nicola}. However, using our method we are able to study a more
general bilinear Fourier multiplier $T(f,g)$ that takes the product \ $fg$ \ as a
special case. The bilinear Fourier multiplier $T$ with symbol \ $m(\xi ,\eta )$ \ is
defined on the product Schwartz space $\mathscr{S\times S}$ \ by
\begin{equation*}
T(f,g)(x)=\int_{
\mathbb{R}
^{n}}\int_{
\mathbb{R}
^{n}}m(\xi ,\eta )\widehat{f}(\xi )\widehat{g}(\eta )e^{2\pi i<x,\xi +\eta
>}d\xi d\eta
\end{equation*}
for all \ $f,g\in \mathscr{S}$. With the relation of the Fourier transform
and its inverse, it is easy to see that
\begin{equation*}
T(f,g)(x)=f(x)g(x)
\end{equation*}
if \ $m(\xi ,\eta )\equiv 1.$\

An interesting question is whether \ $T(f,g)$ \ is bounded on the modulation
spaces provided it is bounded on certain Lebesgue spaces. We find the sharp
conditions to answer this question.
\begin{theorem}[Multi-linear Fourier multipliers on modulation spaces] \label{Sharp conditions for the multi-linear Fourier multipliers on modulation spaces}
Suppose $1\leq q, q_1, q_2\leq \infty$, $s, s_1, s_2\in \mathbb{R}$. Then
\begin{equation*}
\|T(f,g)\|_{L^p}\lesssim \|f\|_{L^{p_1}}\|g\|_{L^{p_2}}\Longrightarrow \|T(f,g)\|_{M_{p,q}^s}\lesssim \|f\|_{M_{p_1,q_1}^{s_1}}\|g\|_{M_{p_2,q_2}^{s_2}}
\end{equation*}
holds for any bilinear Fourier multiplier $T$ and $1\leq p, p_1, p_2\leq \infty$, if and only if $(\mathbf{q},\mathbf{s})$ satisfies one of the conditions $\mathcal {A}_i$, $i=1,2,3,4$.
\end{theorem}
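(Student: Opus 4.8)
The plan is to establish the two directions separately: sufficiency of the conditions $\mathcal{A}_i$ via a frequency-uniform decomposition that reduces the modulation-space estimate to the discrete weighted Young inequality of Theorem~\ref{Sharpness of weighted Young's inequality, discrete form}, and necessity by specializing the hypothesis to the pointwise product and invoking Theorem~\ref{Sharpness of product on modulation spaces, bilinear case}. Throughout I would use the frequency-uniform localizations $\Box_k$, $k\in\mathbb{Z}^n$, and the equivalent norm $\|h\|_{M_{p,q}^s}\asymp\bigl\|(\|\Box_k h\|_{L^p})_k\bigr\|_{l(q,s)}$.

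For sufficiency, assume $(\mathbf{q},\mathbf{s})$ satisfies some $\mathcal{A}_i$ and let $T=T_m$ be an arbitrary bilinear Fourier multiplier bounded from $L^{p_1}\times L^{p_2}$ into $L^p$ with norm $C$. Decomposing $f=\sum_{k_1}\Box_{k_1}f$, $g=\sum_{k_2}\Box_{k_2}g$ (convergent for Schwartz $f,g$, the general case following by density) and using bilinearity, $\Box_k T(f,g)=\sum_{k_1,k_2}\Box_k T(\Box_{k_1}f,\Box_{k_2}g)$; since the Fourier support of $T(\Box_{k_1}f,\Box_{k_2}g)$ lies in $\operatorname{supp}\widehat{\Box_{k_1}f}+\operatorname{supp}\widehat{\Box_{k_2}g}$, a cube of bounded size around $k_1+k_2$, only the terms with $|k-k_1-k_2|\le M$ survive, for some fixed $M$. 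The hypothesis on $T$ applies verbatim to the pieces $\Box_{k_1}f,\Box_{k_2}g$ — this is the key point: no localized Lebesgue bound has to be proved — so, together with $\|\Box_k h\|_{L^p}\lesssim\|h\|_{L^p}$,
\begin{equation*}
\|\Box_k T(f,g)\|_{L^p}\lesssim C\sum_{|k-k_1-k_2|\le M}\|\Box_{k_1}f\|_{L^{p_1}}\|\Box_{k_2}g\|_{L^{p_2}}\lesssim C\sum_{|\delta|\le M}(a\ast b)(k-\delta),
\end{equation*}
where $a(k_1)=\|\Box_{k_1}f\|_{L^{p_1}}$, $b(k_2)=\|\Box_{k_2}g\|_{L^{p_2}}$. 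Taking the $l(q,s)$-norm in $k$, using $\langle k\rangle\asymp\langle k-\delta\rangle$ for $|\delta|\le M$ together with translation invariance of $\ell^q$, and then applying Theorem~\ref{Sharpness of weighted Young's inequality, discrete form} (which holds precisely because $(\mathbf{q},\mathbf{s})$ satisfies some $\mathcal{A}_i$) gives
\begin{equation*}
\|T(f,g)\|_{M_{p,q}^s}\lesssim C\|a\ast b\|_{l(q,s)}\lesssim C\|a\|_{l(q_1,s_1)}\|b\|_{l(q_2,s_2)}=C\|f\|_{M_{p_1,q_1}^{s_1}}\|g\|_{M_{p_2,q_2}^{s_2}},
\end{equation*}
which is the asserted implication. (This is the same decomposition used to prove Theorem~\ref{Sharpness of product on modulation spaces, bilinear case} for the product $fg$.)

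For necessity, assume the displayed implication holds for every bilinear Fourier multiplier $T$ and all $1\le p,p_1,p_2\le\infty$, and specialize to the pointwise product $T(f,g)=fg$ (the symbol $m\equiv 1$) with $p_1=p_2=2$, $p=1$. The premise $\|fg\|_{L^1}\le\|f\|_{L^2}\|g\|_{L^2}$ holds by Cauchy--Schwarz, so the conclusion gives $\|fg\|_{M_{1,q}^s}\lesssim\|f\|_{M_{2,q_1}^{s_1}}\|g\|_{M_{2,q_2}^{s_2}}$; since $1\le\tfrac12+\tfrac12$, Theorem~\ref{Sharpness of product on modulation spaces, bilinear case} then forces $(\mathbf{q},\mathbf{s})$ to satisfy one of $\mathcal{A}_i$, $i=1,2,3,4$.

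The real content of the statement is therefore already carried by Theorems~\ref{Sharpness of weighted Young's inequality, discrete form} and~\ref{Sharpness of product on modulation spaces, bilinear case}. The single step in the present argument that deserves care is the almost-orthogonality observation that the bilinear multiplier confines the frequency cube at $k_1+k_2$ to a bounded neighbourhood of itself, which is what lets the $k_1,k_2$-sum for $\Box_k T(f,g)$ collapse into a discrete convolution to which Theorem~\ref{Sharpness of weighted Young's inequality, discrete form} applies; I do not expect any further obstacle.
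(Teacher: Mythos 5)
Your proposal is correct and follows essentially the same route as the paper: sufficiency by the frequency-uniform decomposition $\Box_k T(f,g)=\sum_{|l|\le c(n)}\sum_{i+j=k+l}\Box_k T(\Box_i f,\Box_j g)$, applying the $L^{p_1}\times L^{p_2}\to L^p$ hypothesis directly to the pieces and then Theorem \ref{Sharpness of weighted Young's inequality, discrete form}; necessity by specializing to $T(f,g)=fg$ with $p=1$, $p_1=p_2=2$ and invoking Theorem \ref{Sharpness of product on modulation spaces, bilinear case}. No gaps to report.
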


The convolution $f\ast g$ may be naturally regarded as a bilinear operator.
One may fix the function $g$ as the kernel function and consider the operator
\begin{equation*}
T_gf(x)=(f\ast g)(x).
\end{equation*}
In Fourier analysis, an important operator with this form is the fractional integral operator (or Riesz potential)
$I_{\lambda}$ defined by
\begin{equation*}
(I_{\lambda}f)(x)=\int_{\mathbb{R}^n}\frac{f(y)}{|x-y|^{n-\lambda}}dy,\hspace{10mm}0<\lambda<n.
\end{equation*}
The famous Hardy-Littlewood-Sobolev theorem gives the boundedness of $
I_{\lambda }$ from $L^{p}(\mathbb{R}^{n})$ to $L^{q}(\mathbb{R}^{n}),$
provided $1<p\leq q<\infty $ and ${1}/{q}={1}/{p}-{\lambda}/{n}.$
Continuity properties of the potential operator in the Lebesgue spaces
are well known, see \cite{Stein_Weiss_book, Grafakos_book}.
The following weighted version of the Hardy-Littlewood-Sobolev theorem was
obtained by Stein and Weiss five decades ago in \cite{Stein_Weiss}.
\\
\textbf{Theorem B} (Stein-Weiss \cite{Stein_Weiss}).
Suppose $1<p\leq q<\infty$, $s,t\in \mathbb{R}$. If
\begin{equation*}
\begin{cases}
s\leq t,
\\
\frac{1}{q}+\frac{s}{n}=\frac{1}{p}+\frac{t}{n}-\frac{\lambda}{n},\\
\frac{1}{p}+\frac{t}{n}<1, \frac{1}{q}+\frac{s}{n}>0,
\end{cases}
\end{equation*}
then
\begin{equation*}
I_{\lambda}: \mathbb{L}(p,t) \rightarrow \mathbb{L}(q,s).
\end{equation*}
A more general result on $I_{\lambda }:\mathbb{L}(p,t)\rightarrow \mathbb{L}
(q,s),$ including the endpoint $p=1$ or $q=\infty $ can be found in \cite
{Strichartz}, in which the author provided an alternative proof. Under the
assumption $1\leq p\leq q<\infty $, Duoandikoetxea \cite{Duoandikoetxea}
found some necessity conditions on the map $I_{\lambda }:\mathbb{L}
(p,t)\rightarrow \mathbb{L}(q,s).$ Also in the same paper, Duoandikoetxea
obtained the (partial) necessary conditions in the radial case. Recently,
Nowak and Stempak claimed the complete result in the radial case, by
finding the sharp conditions for $I_{\lambda }:\mathbb{L}(p,t)\rightarrow
\mathbb{L}(q,s),$ including the endpoint $q=\infty $ (see Corollary 2.6 in
\cite{Nowak_Krzysztof}).
Some work associated with the weighted inequality for fractional integral operator can be found in
\cite{De_Illinois_2011, Lacey, Muckenhoupt,Rubin_Math.Notes_1983, Sawyer_weak, Sawyer_two weight}.
On the other hand, as mentioned in Stein-Wainger \cite{Stein_Wainger}, the discrete analogue of the fractional integral operator is given by
\begin{equation*}
(\mathcal {I}_{\lambda}f)(k)=\sum_{j\in \mathbb{Z}^n, j\neq k}\frac{f(j)}{|k-j|^{n-\lambda}}.
\end{equation*}

As the second application of our main results on Young-type inequalities,
we will study the fractional integral operator in both discrete and
continuous case. Our results and methods allow us to obtain the sharp
conditions for the boundedeness of fractional integral operator on weighted
Lebesgue spaces. Especially, in the continuous case, we optimize some
previous results by finding the sharp conditions for the boundedness of $
L^{p}-L^{q}$ estimates of fractional integral operators with power weights.
Our proof mainly depends on the discretization of the operator, which is
quite different from the methods used by other authors.

Now, we list our main results associated with fractional integral operators.
\begin{theorem}[Fractional integral operator, discrete form, weight $\langle k\rangle$] \label{Sharpness of fractional integration operator, discrete form}
Suppose $1\leq p, q\leq \infty$, $t, s\in \mathbb{R}$. Then
\begin{equation*}
\mathcal {I}_{\lambda}: l(p,t)\rightarrow l(q,s)
\end{equation*}
if and only if $(q, p, s, t)$ satisfies one of the following conditions
\begin{eqnarray}
&&(\mathcal {C}_1)\begin{cases}
s\leq t,\\
\frac{\lambda}{n}+\Big(\frac{1}{q}+\frac{s}{n}\Big)\vee 0<\Big(\frac{1}{p}+\frac{t}{n}\Big)\vee 0,\\
\frac{\lambda}{n}+\frac{1}{q}+\frac{s}{n}\leq 1,\\
(q', -s)=(1, \lambda-n)~\text{if}~ \frac{\lambda}{n}+\frac{1}{q}+\frac{s}{n}=1;
\end{cases}
\\
&&(\mathcal {C}_3)~\begin{cases}
s\leq t,\\
p=1, t=\lambda-n, \frac{1}{q}+\frac{s}{n}<0;\\
\end{cases}
\\
&&(\mathcal {C}_4)~\begin{cases}
s\leq t,\\
\frac{\lambda}{n}+\frac{1}{q}+\frac{s}{n}=\frac{1}{p}+\frac{t}{n}, \frac{1}{q}\leq \frac{1}{p},\\
\frac{1}{q}+\frac{s}{n}>0,~\frac{1}{p}+\frac{t}{n}<1,\\
q\neq \infty,~p\neq 1,~if~s=t.
\end{cases}
\end{eqnarray}
\end{theorem}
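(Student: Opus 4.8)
The plan is to reduce the statement to Theorem \ref{Sharpness of weighted Young's inequality, discrete form}. Write $\mathcal{I}_\lambda f=f\ast K_\lambda$, where $K_\lambda(k)=|k|^{\lambda-n}$ for $k\neq0$ and $K_\lambda(0)=0$. Since $|m|\leq\langle m\rangle\leq\sqrt2\,|m|$ on $\mathbb{Z}^n\setminus\{0\}$ and $\lambda-n<0$, we have $K_\lambda(m)\asymp\langle m\rangle^{\lambda-n}$ off the origin, so $K_\lambda$ lies in $l(\infty,n-\lambda)$ and is, up to a bounded factor and its value at the origin, extremal in the unit ball of $l(\infty,n-\lambda)$. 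I therefore aim to prove the equivalence
\begin{equation*}
\mathcal{I}_\lambda:l(p,t)\to l(q,s)\qquad\Longleftrightarrow\qquad l(p,t)\ast l(\infty,n-\lambda)\subset l(q,s),
\end{equation*}
and then read off, from Theorem \ref{Sharpness of weighted Young's inequality, discrete form}, what the conditions $\mathcal{A}_i$ become when $(q_2,s_2)=(\infty,n-\lambda)$.

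The implication ``$\Leftarrow$'' is immediate: since $\|K_\lambda\|_{l(\infty,n-\lambda)}<\infty$, the inclusion gives $\|\mathcal{I}_\lambda f\|_{l(q,s)}=\|f\ast K_\lambda\|_{l(q,s)}\lesssim\|f\|_{l(p,t)}$. For ``$\Rightarrow$'' I would first squeeze out two elementary necessary conditions by testing. Evaluating $\mathcal{I}_\lambda\delta_j$ at a neighbour of $j$ and letting $|j|\to\infty$ forces $s\leq t$. Feeding in $\mathbf{1}_{B(z,R)}$ with $|z|=2R$ and $R\to\infty$ — on the $\asymp R^n$ lattice points $k$ with $|k-z|\leq R/2$ one has $\mathcal{I}_\lambda\mathbf{1}_{B(z,R)}(k)\gtrsim R^\lambda$ while $\langle k\rangle\asymp R$, whereas $\|\mathbf{1}_{B(z,R)}\|_{l(p,t)}\asymp R^{n/p+t}$ — forces $\lambda/n+1/q+s/n\leq1/p+t/n$; since $\lambda>0$ this gives $1/q+s/n<1/p+t/n$, and together with $s\leq t$ it yields the embedding $l(p,t)\hookrightarrow l(q,s)$. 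Now if $g$ lies in the unit ball of $l(\infty,n-\lambda)$, then $|g(m)|\leq\langle m\rangle^{\lambda-n}\leq|m|^{\lambda-n}$ for $m\neq0$ and $|g(0)|\leq1$, so pointwise $|(f\ast g)(k)|\leq|f(k)|+\mathcal{I}_\lambda(|f|)(k)$; taking $l(q,s)$-norms, using the embedding to absorb $\|f\|_{l(q,s)}$ and the boundedness of $\mathcal{I}_\lambda$ for the remaining term, gives $l(p,t)\ast l(\infty,n-\lambda)\subset l(q,s)$.

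It then remains to match conditions. Substituting $(q_1,s_1,q_2,s_2)=(p,t,\infty,n-\lambda)$ into $\mathcal{A}_i$: the case $\mathcal{A}_2$ requires $s_2=0$, impossible since $0<\lambda<n$ — which is exactly why no case ``$\mathcal{C}_2$'' appears. For $i\in\{1,3,4\}$ one checks that $(q,p,\infty,s,t,n-\lambda)$ satisfies $\mathcal{A}_i$ if and only if $(q,p,s,t)$ satisfies $\mathcal{C}_i$. For $\mathcal{A}_1$, for example, $(1/q_2+s_2/n)\vee0=1-\lambda/n>0$, so its second line collapses to $\lambda/n+(1/q+s/n)\vee0<(1/p+t/n)\vee0$ and the relation $1/q+s/n\leq1/q_2+s_2/n$ becomes $\lambda/n+1/q+s/n\leq1$; one then verifies that the remaining requirements ($s\leq n-\lambda$, $t\geq\lambda-n$, $1/q+s/n\leq1/p+t/n$, $\lambda/n\leq1/p+t/n$, and the three equality clauses) are automatically forced by $\mathcal{C}_1$ — the first few from the two displayed lines of $\mathcal{C}_1$ and the strict inequality $(1/p+t/n)\vee0>\lambda/n$ hidden in them, and the equality clauses being either vacuous or identical to the clause ``$(q',-s)=(1,\lambda-n)$'' of $\mathcal{C}_1$. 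The analogues $\mathcal{A}_3\leftrightarrow\mathcal{C}_3$ (here $q_1=p,\ q_2=\infty$ force $p=1$ and $s_1+s_2=0$ forces $t=\lambda-n$) and $\mathcal{A}_4\leftrightarrow\mathcal{C}_4$ run the same way; combining this with the equivalence above and Theorem \ref{Sharpness of weighted Young's inequality, discrete form} finishes the proof.

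I expect the main obstacle to be this last, purely bookkeeping, step: one must walk through every clause of $\mathcal{A}_1$ and $\mathcal{A}_4$ — particularly the three conditional clauses governing equality among the indices — and confirm that under $(q_2,s_2)=(\infty,n-\lambda)$ each is vacuous, automatically implied, or exactly one of the listed conditions in $\mathcal{C}_i$, keeping careful track of which inequalities are strict. The only subtlety in the ``$\Rightarrow$'' reduction is ensuring the stray diagonal term $|f(k)|$ in $|f\ast g|\leq|f|+\mathcal{I}_\lambda(|f|)$ is harmless, which is precisely why the embedding $l(p,t)\hookrightarrow l(q,s)$ — itself drawn from the two direct tests on $\delta_j$ and on far-away balls — must be in hand before Theorem \ref{Sharpness of weighted Young's inequality, discrete form} is invoked.
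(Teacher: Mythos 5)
Your proposal is correct, but it takes a genuinely different route from the paper. You establish a single equivalence, $\mathcal{I}_{\lambda}:l(p,t)\rightarrow l(q,s)$ if and only if $l(p,t)\ast l(\infty,n-\lambda)\subset l(q,s)$ (the nontrivial direction resting on the pointwise bound $|(f\ast g)(k)|\leq |f(k)|+\mathcal{I}_{\lambda}(|f|)(k)$ for $g$ in the unit ball of $l(\infty,n-\lambda)$, together with the embedding $l(p,t)\subset l(q,s)$ extracted from your two tests, which Proposition \ref{Sharpness of embedding, discrete form} indeed yields from $s\leq t$ and $\lambda/n+1/q+s/n\leq 1/p+t/n$), and then you let Theorem \ref{Sharpness of weighted Young's inequality, discrete form} with $(q_2,s_2)=(\infty,n-\lambda)$ do all the work. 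The paper instead argues case by case: necessity is proved by direct testing of $\mathcal{I}_{\lambda}$ (balls centered at the origin, the shift bound $\mathcal{I}_{\lambda}f(k)\gtrsim f(k-1)$ giving the embedding, delta/duality tests giving $l(q',-s)\subset l(1,\lambda-n)$ and $l(p,t)\subset l(1,\lambda-n)$, plus bespoke log-corrected sequences to exclude $q=\infty$, $p=1$ when $s=t$ and a contradiction argument for the set $Z_2$), while sufficiency for $\mathcal{C}_1$ uses Theorem \ref{Sharpness of weighted Young's inequality, discrete form} with a different auxiliary space $l(q_1,s)$, $\mathcal{C}_3$ by duality, and $\mathcal{C}_4$ by transferring Strichartz's continuous result through an implication argument with Proposition \ref{Integral capability of weight, discrete form} at the endpoints. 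Your reduction buys uniformity: all endpoint constructions and the emptiness of the ``$\mathcal{C}_2$'' case are inherited from Theorem \ref{Sharpness of weighted Young's inequality, discrete form}, and no appeal to Strichartz is needed; the price is the clause-by-clause translation, which does check out — under $\mathcal{C}_4$ the scaling relation and $1/p+t/n<1$ force $s<n-\lambda$, so the ``$s=s_2$'' trigger in $\mathcal{A}_4$ never fires, and under $\mathcal{C}_1$ the strict inequality in its second line forces $1/q+s/n<1/p+t/n$ and $\lambda/n<1/p+t/n$, so two of the three equality clauses of $\mathcal{A}_1$ are vacuous and the third is literally the last clause of $\mathcal{C}_1$, with $\mathcal{A}_2$ void because $s_2=n-\lambda\neq 0$. (The ``extremality'' remark about $K_{\lambda}$ is not needed; only the two directions of your equivalence are used.)
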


\begin{theorem}[Fractional integral operator, continuous form, weight $|x|$] \label{Sharpness of fractional integration operator, power weight}
Let $1\leq p, q\leq \infty$, $t, s\in \mathbb{R}$. Then
\begin{equation*}
I_{\lambda}: \mathbb{L}(p,t)\rightarrow \mathbb{L}(q,s)
\end{equation*}
if and only if $(q, p, s, t)$ satisfies $\mathcal {C}_4$.
\end{theorem}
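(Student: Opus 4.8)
The operator is a convolution, $I_{\lambda}f=f\ast K_{\lambda}$ with the homogeneous, non-integrable kernel $K_{\lambda}(x)=|x|^{\lambda-n}$, so every obstruction to $I_{\lambda}:\mathbb{L}(p,t)\to\mathbb{L}(q,s)$ should be attributable either to the singularity of $K_{\lambda}$ (and of the weight $|x|^{s}$) at the origin, or to the behaviour of $K_{\lambda}$ at infinity. The plan is to isolate these two regimes.

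\emph{Necessity.} Suppose $I_{\lambda}:\mathbb{L}(p,t)\to\mathbb{L}(q,s)$. Testing against functions $f=\sum_{k\neq 0}a_{k}\chi_{Q_{k}}$, where $Q_{k}$ is the unit cube centred at $k\in\mathbb{Z}^{n}$, and comparing $\|f\|_{\mathbb{L}(p,t)}$ with $\|(a_{k})\|_{l(p,t)}$ and $\|I_{\lambda}f\|_{\mathbb{L}(q,s)}$ with $\|\mathcal{I}_{\lambda}(a_{k})\|_{l(q,s)}$ (the discrepancy between $\int_{Q_{j}}|x-y|^{\lambda-n}\,dy$ and $|k-j|^{\lambda-n}$ being harmless for $|k-j|\ge 2$, the short-range remainder being trivially dominated once $1/q\le 1/p$), one obtains $\mathcal{I}_{\lambda}:l(p,t)\to l(q,s)$; hence by Theorem \ref{Sharpness of fractional integration operator, discrete form} the tuple $(q,p,s,t)$ satisfies one of $\mathcal{C}_{1},\mathcal{C}_{3},\mathcal{C}_{4}$. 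On the other hand the dilation identity $I_{\lambda}\big(f(r\,\cdot)\big)(x)=r^{-\lambda}(I_{\lambda}f)(rx)$ together with $\|f(r\,\cdot)\|_{\mathbb{L}(p,t)}=r^{-(n/p+t)}\|f\|_{\mathbb{L}(p,t)}$ forces the homogeneity relation $\tfrac{\lambda}{n}+\tfrac1q+\tfrac sn=\tfrac1p+\tfrac tn$. A one-line check shows this relation is incompatible with $\mathcal{C}_{3}$ (it collapses $1/q+s/n$ to $0$, contradicting the strict inequality there) and with $\mathcal{C}_{1}$ (after substitution the strict inequality of $\mathcal{C}_{1}$ becomes a contradiction, whether or not $1/q+s/n\ge 0$), so in fact $\mathcal{C}_{4}$ holds. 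The individual inequalities of $\mathcal{C}_{4}$ can also be seen directly: $s\le t$ and $1/q\le 1/p$ by superposing far-away unit bumps; $1/q+s/n>0$ from $I_{\lambda}\chi_{B(0,1)}(x)\sim|x|^{\lambda-n}$ at infinity; $1/p+t/n<1$ and the exclusions $q\ne\infty$, $p\ne 1$ when $s=t$ from near-critical extremizers such as $f(y)=|y|^{-t-n/p}(\log 1/|y|)^{-\alpha}\chi_{B(0,1/2)}(y)$.

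\emph{Sufficiency.} Assume $\mathcal{C}_{4}$ and split $K_{\lambda}=K_{\lambda}^{0}+K_{\lambda}^{\infty}$ with $K_{\lambda}^{0}=K_{\lambda}\chi_{\{|x|<1\}}$, $K_{\lambda}^{\infty}=K_{\lambda}\chi_{\{|x|\ge 1\}}$, so $I_{\lambda}=I_{\lambda}^{0}+I_{\lambda}^{\infty}$. For the long-range part $I_{\lambda}^{\infty}$ the kernel has no singularity, and discretizing into unit cubes reduces $\|I_{\lambda}^{\infty}f\|_{\mathbb{L}(q,s)}\lesssim\|f\|_{\mathbb{L}(p,t)}$ to the discrete bound $\mathcal{I}_{\lambda}:l(p,t)\to l(q,s)$, which holds by Theorem \ref{Sharpness of fractional integration operator, discrete form} since $\mathcal{C}_{4}$ implies one of the $\mathcal{C}_{i}$; the contribution of the origin cube is treated separately via H\"older's inequality, the integrals $\int_{|y|<1}|y|^{-tp'}dy$ and $\int_{|x|<1}|x|^{sq}dx$ being finite precisely because $1/p+t/n<1$ and $1/q+s/n>0$. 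For the short-range part $I_{\lambda}^{0}$, a compactly supported local Riesz potential: away from a fixed ball around the origin the weights $\langle x\rangle^{s}$ and $|x|^{s}$ are locally constant, so $\|I_{\lambda}^{0}f\|_{\mathbb{L}(q,s)}$ there is handled by gluing the unweighted local fractional-integral estimate over cubes (legitimate because $1/q\le 1/p$, the critical case $1/q=1/p-\lambda/n$, i.e.\ $s=t$, being covered by $q\ne\infty$, $p\ne 1$); on the fixed ball one invokes the weighted estimate — the hypotheses of Theorem B hold verbatim when $1<p\le q<\infty$ (in particular whenever $s=t$, by $\mathcal{C}_{4}$), while the endpoints $p=1$ or $q=\infty$ with $s\ne t$ are supplied by Strichartz's extension \cite{Strichartz}. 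Summing the two pieces yields $I_{\lambda}:\mathbb{L}(p,t)\to\mathbb{L}(q,s)$.

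The main obstacle is the sharp endpoint bookkeeping, in two places. First, one must justify \emph{why} the continuous operator sees only $\mathcal{C}_{4}$ while the discrete operator of Theorem \ref{Sharpness of fractional integration operator, discrete form} also admits $\mathcal{C}_{1}$ and $\mathcal{C}_{3}$: discretization is lossless apart from the origin singularity, and restoring it imposes the exact homogeneity (via scaling) together with the strict inequalities $1/q+s/n>0$, $1/p+t/n<1$, which jointly annihilate $\mathcal{C}_{1}$ and $\mathcal{C}_{3}$. Second, one must correctly locate the degenerate case $s=t$, where the local estimate sits exactly at the critical Sobolev exponent and genuinely fails at $q=\infty$ or $p=1$; the delicate points are constructing the logarithmically corrected extremizers for the necessity of those exclusions, and verifying on the sufficiency side that when $s\ne t$ the strict inequalities already leave room to run Stein--Weiss (or its endpoint form) comfortably.
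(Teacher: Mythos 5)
Your proof is correct in substance, and the necessity half is essentially the paper's argument: companion step functions reduce the continuous bound to the discrete one, Theorem \ref{Sharpness of fractional integration operator, discrete form} then places $(q,p,s,t)$ in $\mathcal{C}_1\cup\mathcal{C}_3\cup\mathcal{C}_4$, and dilation yields $\frac{\lambda}{n}+\frac{1}{q}+\frac{s}{n}=\frac{1}{p}+\frac{t}{n}$. Your observation that this single relation already annihilates $\mathcal{C}_1$ and $\mathcal{C}_3$ is correct (I checked the case split on the sign of $\frac1q+\frac sn$) and is slightly leaner than the paper, which additionally tests $I_\lambda$ on $\chi_{B(2,1)}$ to extract $\frac1q+\frac sn\ge 0$ before eliminating the other cases. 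Two small repairs to your discretization step: taking $f=\sum_{k\neq0}a_k\chi_{Q_k}$ only gives the discrete inequality for sequences vanishing at $k=0$, which is not literally what the necessity part of Theorem \ref{Sharpness of fractional integration operator, discrete form} consumes (its test sequences include a delta at the origin); the paper's Proposition \ref{Implication method} sidesteps this by using $\chi_{Q\backslash\frac18 Q}(\cdot-k)$ for every $k$, including $k=0$, so that $|x|^s\sim\langle k\rangle^s$ on the support. Also, in this direction no ``short-range remainder'' has to be dominated, and appealing to $1/q\le1/p$ there would be circular (it is part of $\mathcal{C}_4$, the conclusion); all that is needed is the lower bound $I_\lambda f(x)\gtrsim\sum_{j\neq k}a_j|k-j|^{\lambda-n}$ on (most of) $Q_k$, which holds because $|x-y|\lesssim|k-j|$ and $\lambda-n<0$. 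Where you genuinely diverge is the sufficiency: the paper disposes of it by citing Strichartz, whose theorem covers all of $\mathcal{C}_4$ including $p=1$ and $q=\infty$, whereas you split $K_\lambda$ into short- and long-range parts, feed the long-range piece to the discrete theorem, glue local HLS/Young estimates off the origin using $1/q\le1/p$ (with the critical case $s=t$ saved by $p\neq1$, $q\neq\infty$), and then still invoke Stein--Weiss (Theorem B) or Strichartz on a ball around the origin. The endpoint bookkeeping you do there checks out (the origin-cube integrals converge exactly because $\frac1p+\frac tn<1$ and $\frac1q+\frac sn>0$), but since Stein--Weiss and Strichartz are global statements the decomposition buys nothing that the one-line citation does not; its only merit is conceptual, in making visible that, granted the discrete theorem, the origin is the sole place where a genuinely continuous input is required.
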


In Theorem \ref{Sharpness of fractional integration operator, discrete form},
to maintain the unified format of proof as in Theorem \ref{Sharpness of weighted Young's inequality, discrete form},
we skip $\mathcal {C}_2$ to name the conditions.
In fact, in our unified method,
the conditions $\mathcal {C}_i$ and $\mathcal {A}_i$ are correspondence for each i=1,2,3,4.
The subscript $i=2$ means the double endpoint cases which can be proved to be trivial.
Under this method of classification, we actually have $\mathcal {C}_2=\emptyset$.
One can also see the proof of Theorem \ref{Sharpness of fractional integration operator, discrete form} in this direction.

In addition, the subscript $i=1$ means the case in which we can use embedding argument to reduce the proof to
a more standard case (see Proposition \ref{For reduction, discrete form}),
the case $i=3$ is actually the dual of the case $i=1$.
Finally, condition for $i=4$ collect the cases which is closely linked to the continuous form.

We remark that each of Theorems 1.1 to 1.7 can be verified independently.
For convenience, we sometimes use one Theorem \ref{Sharpness of weighted Young's inequality, discrete form} to prove other theorems for an
easy approach. Our methods in this paper is in the spirit of discretization,
even in the process of dealing with the continuous case. This is totally
different from the methods used in the other references about this topic.

We also remark that the nonnegative functions (or sequences) are enough for most of the proofs in this paper.
So, if there is no special explanation, the functions we use in the proofs should be presumed nonnegative.
\section{Preliminaries and Definitions}
Let $C$ be a positive constant that may depend on $n,\,p_{i},\,q_{i},\,s_{i},\,t,\,\lambda\,(i=1,\,2).$ The notation $X\lesssim Y$ denotes the statement that $X\leq CY$,
the notation $X\sim Y$ means the statement $X\lesssim Y\lesssim X$, and the
notation $X\simeq Y$ denotes the statement $X=CY$. For a multi-index $
k=(k_{1},k_{2},...,k_{n})\in \mathbb{Z}^{n}$, we denote $|k|_{\infty }:=
\sup_{i=1,2,...,n}|k_{i}|$, and $\langle k\rangle :=(1+|k|^{2})^{{
1}/{2}}.$

Let $\mathscr{S}:= \mathscr{S}(\mathbb{R}^{n})$ be the Schwartz space
and $\mathscr{S}':=\mathscr{S}'(\mathbb{R}^{n})$ be the space of tempered distributions.
We define the Fourier transform $\mathcal {F}f$ and the inverse Fourier $\mathcal {F}^{-1}f$ of $f\in \mathscr{S}(\mathbb{R}^{n})$ by
$$
\mathcal {F}f(\xi)=\hat{f}(\xi)=\int_{\mathbb{R}^{n}}f(x)e^{-2\pi ix\cdot \xi}dx
,
~~
\mathcal {F}^{-1}f(x)=\hat{f}(-x)=\int_{\mathbb{R}^{n}}f(\xi)e^{2\pi ix\cdot \xi}d\xi.
$$

We recall, in the following, the definitions and some properties of the
function spaces involved in this paper.

\begin{definition}
Let $s\in \mathbb{R}, 0<p,q\leq \infty$. The function space $\mathbb{L}_x(p,s)$ consists of all measurable functions $f$ such that
\begin{numcases}{\|f\|_{\mathbb{L}_x(p,s)}=}
     \left(\int_{\mathbb{R}^n}|f(x)|^p |x|^{ps} dx\right)^{1/p}, &$p<\infty$  \nonumber \\
     ess\sup_{x\in \mathbb{R}^n}|f(x)||x|^s,  &$p=\infty$ \nonumber
\end{numcases}
is finite. The function space $L_x(p,s)$ consists of measurable functions $f$ such that
\begin{numcases}{\|f\|_{L_x(p,s)}=}
     \left(\int_{\mathbb{R}^n}|f(x)|^p \langle x\rangle^{ps} dx\right)^{1/p}, &$p<\infty$  \nonumber \\
     ess\sup_{x\in \mathbb{R}^n}|f(x)| \langle x\rangle^s, &$p=\infty$ \nonumber
\end{numcases}
is finite.
If $f$ is defined on $\mathbb{Z}^n$, we denote its $l_{k}(p,s)$ norm
\begin{numcases}{\|f\|_{l_k(p,s)}=}
\left(\sum_{k\in \mathbb{Z}^n}|f(k)|^p \langle k\rangle^{ps}\right)^{1/p}, &$p<\infty$ \nonumber\\
\sup_{k\in \mathbb{Z}^n}|f(k)|\langle k\rangle^s, \hspace{15mm}&$p=\infty$ \nonumber
\end{numcases}
and let $l_k(p,s)$ be the (quasi-)Banach space of functions $f: \mathbb{Z}^n\rightarrow \mathbb{C}$ whose $l_k(p,s)$ norm is finite.
We write $\mathbb{L}(p,s)$, $L(p, s)$, $l(p,s)$ for short respectively, if there is no confusion.
We also  denote $L^p=\mathbb{L}(p,0)=L(p,0)$, $l^p=l(p,0)$ for short.
\end{definition}

To introduce the modulation space,  we first give the definition of the short-time Fourier transform.
For a fixed nonzero $\phi\in \mathscr{S}$, the short-time Fourier
transform of $f\in \mathscr{S}$ with respect to the window function $\phi$ is given by
\begin{equation*}
V_{\phi}f(x,\xi)=\int_{\mathbb{R}^n}f(y)\overline{\phi(y-x)}e^{-2\pi iy\cdot \xi}dy.
\end{equation*}
The norm on modulation space is given by
\begin{equation}\label{modulation space def. with window function}
\begin{split}
\|f\|_{M_{p,q}^{s}}&=\big\|\|V_{\phi}f(x,\xi)\|_{L^p_x}\big\|_{L_\xi(q,s)}
\\&
=\left(\int_{\mathbb{R}^n}\left(\int_{\mathbb{R}^n}|V_{\phi}f(x,\xi)|^{p}dx\right)^{q/p}\langle \xi\rangle^{sq}d\xi\right)^{{1}/{q}},
\end{split}
\end{equation}
with a natural modification for $p=\infty$ or $q=\infty$.
Note that this definition is independent of the choice of the window function.

Applying the frequency-uniform localization techniques, one can give an
alternative definition of modulation spaces (see \cite{WH_JDE_2007} for
details).
For $k\in \mathbb{Z}^{n},$ we denote by $Q_{k}$ the unit cube
centered at $k$. The family $\{Q_{k}\}_{k\in \mathbb{Z}^{n}}$ constitutes a
decomposition of $\mathbb{R}^{n}$. Let $\rho \in \mathscr {S}(\mathbb{R}
^{n}),$ $\rho :\mathbb{R}^{n}\rightarrow \lbrack 0,1]$ be a smooth function
satisfying $\rho (\xi )=1$ for $|\xi |_{\infty }\leq {1}/{2}$ and $\rho
(\xi )=0$ for $|\xi |\geq 3/4$. Let $\rho _{k}$ be a translation of $\rho $,
\begin{equation*}
\rho _{k}(\xi )=\rho (\xi -k),\text{ }k\in \mathbb{Z}^{n}.
\end{equation*}
Since $\rho _{k}(\xi )=1$ for $\xi \in Q_{k}$, \ we have that $\sum_{k\in
\mathbb{Z}^{n}}\rho _{k}(\xi )\geq 1$ for all $\xi \in \mathbb{R}^{n}$.
Denote
\begin{equation*}
\sigma _{k}(\xi )=\rho _{k}(\xi )\left( \sum_{l\in \mathbb{Z}^{n}}\rho
_{l}(\xi )\right) ^{-1},~~~~k\in \mathbb{Z}^{n}.
\end{equation*}
Then $\{\sigma _{k}(\xi )\}_{k\in \mathbb{Z}^{n}}$ constitutes a smooth
decomposition of $\mathbb{R}^{n},$ where $\sigma _{k}(\xi )=\sigma (\xi -k)$. The frequency-uniform decomposition operators are defined by
\begin{equation*}
\Box _{k}:=\mathscr{F}^{-1}\sigma _{k}\mathscr{F}
\end{equation*}
for $k\in \mathbb{Z}^{n}$. With the family \ $\left\{ \Box _{k}\right\}
_{k\in \mathbb{Z}^{n}}$,\ an alternative norm of modulation space can be
defined by
\begin{equation*}
\left( \sum_{k\in \mathbb{Z}^{n}}\langle k\rangle ^{sq}\Vert \Box _{k}f\Vert
_{p}^{q}\right) ^{{1}/{q}},
\end{equation*}%
with a natural modification for $p=\infty $ or $q=\infty $. We recall that
this definition is independent of the choice of $\sigma $ and that this norm
is equivalent to the norm defined in (\ref{modulation space def. with window
function}) (see \cite{WH_JDE_2007}). So we use the same symbol $\Vert f\Vert
_{M_{p,q}^{s}}$ to denote these two modulation space norms.

\begin{lemma}[Embedding of $L^{p}$ with
Fourier compact support, \cite{Tribel_83}]
\label{embedding of Lp with Fourier compact support} Let $0<p_{1}\leq
p_{2}\leq \infty $ and assume $supp{\hat{f}}\subseteq B(x_0,R)$ for some $x_0\in \mathbb{R}^n$, $R>0$. We have
\begin{equation*}
\Vert f\Vert _{L^{p_{2}}}\leq CR^{n(\frac{1}{p_{1}}-\frac{1}{p_{2}})}\Vert
f\Vert _{L^{p_{1}}},
\end{equation*}%
where $C$ is independent of $f$ and $x_0$.
\end{lemma}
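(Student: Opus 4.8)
The plan is to reduce the estimate, by symmetry operations that preserve all $L^p$ norms (up to explicit powers of $R$), to a single normalized inequality for functions band-limited to the unit ball, and then to prove that inequality via an $L^{p_1}\to L^{\infty}$ bound. First I would remove the center: the modulated function $h(x)=e^{2\pi i x\cdot x_0}f(x)$ has $|h|=|f|$, hence the same $L^p$ norm for every $0<p\le\infty$, while $\widehat h(\xi)=\widehat f(\xi-x_0)$ is supported in $B(0,R)$. Next I would remove the radius by dilation: for $g(x)=h(x/R)$ one has $\widehat g(\xi)=R^{n}\widehat h(R\xi)$, so $\operatorname{supp}\widehat g\subseteq B(0,1)$, and $\|g\|_{L^p}=R^{n/p}\|h\|_{L^p}$. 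Thus the asserted inequality for $f$ is exactly equivalent to
\[
\|g\|_{L^{p_2}}\le C\,\|g\|_{L^{p_1}}\qquad\text{whenever }\operatorname{supp}\widehat g\subseteq B(0,1),
\]
with $C$ depending only on $n,p_1,p_2$; undoing the dilation reinserts the factor $R^{n(1/p_1-1/p_2)}$, and manifestly no constant depends on $f$ or $x_0$. So it suffices to prove this normalized inequality.

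For that the decisive case is $p_2=\infty$. I would fix once and for all a window $\psi\in\mathscr S$ with $\widehat\psi\equiv1$ on $B(0,1)$, so that $\widehat g=\widehat g\,\widehat\psi$ and hence $g=g*\psi$. If $p_1\ge1$, Young's inequality gives immediately $\|g\|_{L^{\infty}}=\|g*\psi\|_{L^{\infty}}\le\|\psi\|_{L^{p_1'}}\,\|g\|_{L^{p_1}}$. If $0<p_1<1$, I would instead use $|g(x)|\le\int_{\mathbb R^n}|g(x-y)|\,|\psi(y)|\,dy$ together with the crude bound $|g(x-y)|^{1-p_1}\le\|g\|_{L^{\infty}}^{1-p_1}$ to obtain
\[
|g(x)|\le\|g\|_{L^{\infty}}^{1-p_1}\,\|\psi\|_{L^{\infty}}\,\|g\|_{L^{p_1}}^{p_1};
\]
once $\|g\|_{L^{\infty}}<\infty$ is known, dividing through yields $\|g\|_{L^{\infty}}\le\|\psi\|_{L^{\infty}}^{1/p_1}\|g\|_{L^{p_1}}$. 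The general range $p_1\le p_2\le\infty$ then follows from the elementary inequality $\|g\|_{L^{p_2}}^{p_2}\le\|g\|_{L^{\infty}}^{p_2-p_1}\|g\|_{L^{p_1}}^{p_1}$: taking $p_2$-th roots and inserting the $L^{\infty}$ bound gives $\|g\|_{L^{p_2}}\lesssim\|g\|_{L^{p_1}}$, and running the reductions backwards completes the proof.

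The only genuine obstacle is the quasi-normed regime $0<p_1<1$, and specifically the a priori finiteness of $\|g\|_{L^{\infty}}$ needed to justify the division above. I would handle it by first establishing the inequality for $g$ in the dense subclass of Schwartz functions with $\operatorname{supp}\widehat g\subseteq B(0,1)$, where $\|g\|_{L^{\infty}}<\infty$ is automatic and the computation of the previous paragraph applies verbatim. Then, given an arbitrary $g\in L^{p_1}$ with $\operatorname{supp}\widehat g\subseteq B(0,1)$, I note that $g$ is a $C^{\infty}$ function of polynomial growth (its Fourier transform is a compactly supported distribution, so Paley–Wiener applies), hence the truncations $g_\varepsilon:=g\cdot\eta_\varepsilon$, with $\eta_\varepsilon(x)=\eta(\varepsilon x)$ for a fixed $\eta\in\mathscr S$ satisfying $\eta(0)=1$ and $\operatorname{supp}\widehat\eta$ compact, are Schwartz functions, have $\operatorname{supp}\widehat{g_\varepsilon}\subseteq B(0,1+c\varepsilon)$, and converge to $g$ in $L^{p_1}$ by dominated convergence. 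Applying the subclass estimate to $g_\varepsilon$ (after a harmless rescaling back to the unit ball) and letting $\varepsilon\to0$ via Fatou's lemma yields the estimate for $g$. Every constant produced in this way depends only on $n$, $p_1$, $p_2$ and the fixed window $\psi$, which gives the uniformity in $f$ and $x_0$ asserted in the lemma.
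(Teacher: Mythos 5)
The paper itself does not prove this lemma; it is quoted from Triebel's book, so the only meaningful comparison is with the standard reference proof. Your argument is exactly that standard Plancherel--P\'olya--Nikol'skii argument: modulation and dilation reduce matters to $\operatorname{supp}\hat g\subseteq B(0,1)$, the reproducing identity $g=g\ast\psi$ with $\hat\psi\equiv 1$ on the unit ball gives the $L^{p_1}\to L^{\infty}$ bound (Young/H\"older for $p_1\ge 1$, the $\|g\|_{\infty}^{1-p_1}$ absorption trick for $0<p_1<1$), and interpolation with $L^{\infty}$ plus the approximation by Schwartz truncations $g\,\eta(\varepsilon\cdot)$ settles the a priori finiteness of $\|g\|_{\infty}$ needed to divide in the quasi-normed range. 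This is correct and complete, and it is good that you treated $0<p_1<1$, since the paper applies the lemma with lower exponent $r$ determined by $1/r=1/p_1+1/p_2$, which can be less than $1$. One trivial slip: with the paper's convention $\hat f(\xi)=\int f(x)e^{-2\pi i x\cdot\xi}\,dx$, modulation by $e^{2\pi i x\cdot x_0}$ translates the spectrum from $B(x_0,R)$ to $B(2x_0,R)$; you should take $h(x)=e^{-2\pi i x\cdot x_0}f(x)$, so that $\hat h(\xi)=\hat f(\xi+x_0)$ is supported in $B(0,R)$. This does not affect the structure or the constants, since $|h|=|f|$ either way.
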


Next, we list some propositions used in the proof of our main theorems.
These propositions are not difficult to be verified, so we only give partial proof details and give some hints.

\begin{proposition}[Sharpness of embedding, discrete form] \label{Sharpness of embedding, discrete form}
Suppose $0<q,q_1,q_2\leq \infty$, $s,s_1,s_2\in \mathbb{R}$. Then
\begin{equation*}
l(q_1,s_1)\subset l(q_2,s_2)
\end{equation*}
holds if and only if
\begin{equation*}
\begin{cases}
s_2\leq s_1  \\
\frac{1}{q_2}+\frac{s_2}{n}< \frac{1}{q_1}+\frac{s_1}{n}
\end{cases}
\text{or} \hspace{10mm}
\begin{cases}
s_2=s_1\\
q_2=q_1.
\end{cases}
\end{equation*}
\end{proposition}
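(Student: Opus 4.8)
The plan is to establish the two directions separately, with the ``if'' direction reduced to a single explicit embedding estimate and the ``only if'' direction reduced to testing the inclusion against two concrete families of sequences. For the sufficiency, suppose first that $s_2\le s_1$ and $1/q_2+s_2/n<1/q_1+s_1/n$; I would write the $l(q_2,s_2)$ norm of $f$ as the $l^{q_2}$ norm of $\langle k\rangle^{s_2}f(k)$ and insert the factor $\langle k\rangle^{s_1-s_2}\langle k\rangle^{s_2-s_1}$, so that
\begin{equation*}
\|f\|_{l(q_2,s_2)}=\big\|\langle k\rangle^{s_2-s_1}\cdot\langle k\rangle^{s_1}f(k)\big\|_{l^{q_2}}.
\end{equation*}
When $q_2\ge q_1$, apply H\"older's inequality in the exponent $r$ with $1/q_2=1/q_1+1/r$ to split off the multiplier $\langle k\rangle^{s_2-s_1}$, whose $l^r$ norm is finite precisely because $(s_1-s_2)r>n$, which is exactly the arithmetic restatement of $1/q_2+s_2/n<1/q_1+s_1/n$ together with $s_2\le s_1$ (here one treats the case $s_1=s_2$ separately, where the strict frequency inequality forces $q_2>q_1$ and the multiplier is the trivial sequence $1$, so one instead uses the monotonicity $l^{q_1}\subset l^{q_2}$). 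When $q_2<q_1$ one still has $l^{q_2}\subset l^{q_1}$ failing, so here one should observe that the hypotheses in fact prevent $q_2<q_1$ unless $s_2<s_1$ strictly, and the same H\"older argument applies after first embedding $l^{q_2}(\langle k\rangle^{s_2-s_1})$... more cleanly, I would simply note that the condition $1/q_2+s_2/n<1/q_1+s_1/n$ with $s_2\le s_1$ is equivalent to the finiteness of $\sum_k\langle k\rangle^{(s_2-s_1)q}$ for the appropriate $q$, and a single application of H\"older settles every subcase at once. The second alternative $s_2=s_1$, $q_2=q_1$ is the trivial identity of norms.

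\textbf{Necessity.} For the ``only if'' direction I would test the inclusion on two types of sequences. First, take $f=\delta_0$ (the unit mass at the origin, or more robustly a finitely supported bump); then $\|f\|_{l(q_1,s_1)}\sim 1$ and $\|f\|_{l(q_2,s_2)}\sim 1$, which gives no information, so instead I would use the scaled bumps $f_N=\mathbf 1_{\{|k|_\infty\le N\}}$. Computing both norms as $N\to\infty$ yields $\|f_N\|_{l(q_j,s_j)}\sim N^{n(1/q_j+s_j/n)}$ when $1/q_j+s_j/n>0$ (with logarithmic or bounded behaviour at the borderline), so the inclusion $l(q_1,s_1)\subset l(q_2,s_2)$ forces $n(1/q_2+s_2/n)\le n(1/q_1+s_1/n)$, i.e. $1/q_2+s_2/n\le 1/q_1+s_1/n$. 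To upgrade the non-strict inequality to the dichotomy in the statement, I would test on a single far-away point $f=\delta_{k_0}$ with $|k_0|$ large: then $\|f\|_{l(q_j,s_j)}=\langle k_0\rangle^{s_j}$, so the inclusion forces $\langle k_0\rangle^{s_2}\lesssim\langle k_0\rangle^{s_1}$ uniformly in $k_0$, hence $s_2\le s_1$. Finally, in the equality case $1/q_2+s_2/n=1/q_1+s_1/n$ one must show $q_1=q_2$ and $s_1=s_2$: combined with $s_2\le s_1$ the equality of the sums gives $1/q_2\ge 1/q_1$; testing on a slowly decaying sequence like $f(k)=\langle k\rangle^{-\alpha}(\log\langle k\rangle)^{-\beta}\mathbf 1_{k\ne 0}$ with $\alpha$ chosen at the critical exponent $\alpha=n/q_1+s_1$ and $\beta$ tuned so that $f\in l(q_1,s_1)$ but $f\notin l(q_2,s_2)$ whenever $q_2<q_1$ rules out the strict case, leaving $q_1=q_2$ and therefore $s_1=s_2$.

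\textbf{Main obstacle.} The routine part is the H\"older computation in the sufficiency and the power-counting on the indicator sequences; the delicate point is the borderline analysis showing that equality of $1/q+s/n$ forces $q_1=q_2$. Here the indicator test functions are insufficient (they only see the non-strict inequality), and one genuinely needs the logarithmically-corrected test sequences, with the exponents chosen so that the sum $\sum_k\langle k\rangle^{-n}(\log\langle k\rangle)^{-\beta q_1}$ converges while $\sum_k\langle k\rangle^{-n}(\log\langle k\rangle)^{-\beta q_2}$ diverges for $q_2<q_1$; getting these convergence thresholds right (they hinge on $\beta q>1$ versus $\beta q\le 1$ after passing to the integral $\int_1^\infty r^{-1}(\log r)^{-\beta q}\,dr$) is the only place requiring care. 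Since the proposition is stated with the remark that such facts are ``not difficult to verify,'' I expect the write-up to be brief, citing the H\"older step and the two test-function computations, with the borderline logarithmic example spelled out in one line.
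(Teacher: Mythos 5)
Your route is essentially the paper's: sufficiency by H\"older together with the monotonicity $l^{q_1}\subset l^{q_2}$ for $q_1\le q_2$, and necessity by testing on indicators of large balls, on a far-away point mass (giving $s_2\le s_1$), and on a critical-exponent sequence in the borderline case $\frac1{q_2}+\frac{s_2}{n}=\frac1{q_1}+\frac{s_1}{n}$. Your borderline test is a harmless variant: you use the untruncated sequence $\langle k\rangle^{-n/q_1-s_1}(\log\langle k\rangle)^{-\beta}$ with $\beta$ between the convergence thresholds $1/q_1$ and $1/q_2$, while the paper truncates the pure power $\langle k\rangle^{-n(1/q_1+s_1/n)}$ to $|k|\le N$ and compares $(\ln N)^{1/q_2}\lesssim(\ln N)^{1/q_1}$; both correctly force $q_1=q_2$ (hence $s_1=s_2$) in that case. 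Two points, however, need repair. First, a small one: the H\"older split $\frac1{q_2}=\frac1{q_1}+\frac1r$ requires $q_2\le q_1$, not $q_2\ge q_1$ as you wrote, and in the opposite case $q_2>q_1$ there is no admissible $r$, so ``a single application of H\"older settles every subcase'' is not literally true; the correct two-case argument (H\"older when $q_2\le q_1$, and $\langle k\rangle^{s_2}\le\langle k\rangle^{s_1}$ plus $l^{q_1}\subset l^{q_2}$ when $q_2>q_1$) is exactly the paper's one-line proof, and you essentially have it once the cases are relabeled.

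The substantive gap is in the first necessity step. The ball indicators $\mathbf 1_{\{|k|\le N\}}$ have $l(q_j,s_j)$-norm comparable to $N^{n[(1/q_j+s_j/n)\vee 0]}$ (up to logarithms), so they only yield $\bigl(\frac1{q_2}+\frac{s_2}{n}\bigr)\vee 0\le\bigl(\frac1{q_1}+\frac{s_1}{n}\bigr)\vee 0$, not the unmaxed inequality you claim; they are blind exactly when $\frac1{q_2}+\frac{s_2}{n}\le 0$, a regime in which the proposition is actually applied later (the sets with $\frac1q+\frac sn<0$). Concretely, with $n=1$, $(q_1,s_1)=(2,-0.7)$, $(q_2,s_2)=(1,-1.1)$, both your ball test and your point-mass test are passed, yet $\frac1{q_2}+s_2=-0.1>\frac1{q_1}+s_1=-0.2$, so your argument cannot reach the stated conclusion there (the embedding does fail, but none of your test sequences detects it). The fix is to test instead on annuli $\mathbf 1_{\{N\le|k|\le 2N\}}$, whose $l(q_j,s_j)$-norms are comparable to $N^{n/q_j+s_j}$ for every sign of the exponent; this gives $\frac1{q_2}+\frac{s_2}{n}\le\frac1{q_1}+\frac{s_1}{n}$ in one stroke, after which your point-mass and logarithmic tests complete the dichotomy. (The paper's own hint, which refers back to the ball-indicator computation of Subsection 3.2, is loose at precisely the same spot, so this is a correction worth making explicit rather than inheriting.)
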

\begin{proof}
The sufficiency can be verified by the H\"{o}lder inequality and the fact that $l_{q_1}\subset l_{q_2}$ for $1/q_2\leqslant 1/q_1$.

To prove the necessity, we firstly obtain $\frac{1}{q_2}+\frac{s_2}{n}\leqslant\frac{1}{q_1}+\frac{s_1}{n}$
by the same method as in the proof of (\ref{for proof, preliminary, 1}).
For a fixed $N\in \mathbb{N}$, we take $a_{k,N}=1$ if $k=N$ and $a_{k,N}=1$ if $k=N$, then $s_2\leqslant s_1$ follows by
$$\langle N\rangle^{s_2}\sim \|\{a_{k,N}\}\|_{l(q_2,s_2)}\lesssim \|\{a_{k,N}\}\|_{l(q_1,s_1)}\sim \langle N\rangle^{s_1}$$
as $|N|\rightarrow \infty$.

Especially, for $\frac{1}{q_2}+\frac{s_2}{n}=\frac{1}{q_1}+\frac{s_1}{n}$, we take
\begin{equation*}
b_{k,N}=
\begin{cases}
\langle k\rangle^{-n(1/q_1+s_1/n)}, & \text{for}~|k|\leq N, \\
0, & \text{otherwise}.
\end{cases}
\end{equation*}
Observing that
\begin{equation*}
  (\ln N)^{1/q_2}\sim \|\{b_{k,N}\}\|_{l(q_2,s_2)}\lesssim \|\{b_{k,N}\}\|_{l(q_1,s_1)}\sim (\ln N)^{1/q_1},
\end{equation*}
we deduce $1/q_2\leqslant 1/q_1$ by letting $|N|\rightarrow \infty$.
Recalling $\frac{1}{q_2}+\frac{s_2}{n}=\frac{1}{q_1}+\frac{s_1}{n}$ and $s_2\leqslant s_1$, we actually have $s_2=s_1$ and $q_2=q_1$ in this case.
\end{proof}

In this paper, the space $l(q,s)$ is quite important in our proof.

\begin{proposition}[Sharpness of embedding, continuous form] \label{Sharpness of embedding, continuous form}
Suppose $0<q,q_1,q_2\leq \infty$, $s,s_1,s_2\in \mathbb{R}$. Then
\begin{equation*}
L(q_1,s_1)\subset L(q_2,s_2)
\end{equation*}
holds if and only if
\begin{equation*}
\begin{cases}
s_2\leq s_1  \\
\frac{1}{q_2}\geq \frac{1}{q_1}\\
\frac{1}{q_2}+\frac{s_2}{n}< \frac{1}{q_1}+\frac{s_1}{n}
\end{cases}
\text{or} \hspace{10mm}
\begin{cases}
s_2=s_1\\
q_2=q_1.
\end{cases}
\end{equation*}
\end{proposition}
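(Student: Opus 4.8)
The plan is to reduce the statement to the discrete embedding of Proposition~\ref{Sharpness of embedding, discrete form} via discretization. The elementary fact I would use is that, decomposing $\mathbb{R}^n=\bigcup_{k\in\mathbb{Z}^n}Q_k$ and noting $\langle x\rangle\sim\langle k\rangle$ for $x\in Q_k$,
\begin{equation*}
\|f\|_{L(q,s)}\sim\big\|\,\{\|f\|_{L^q(Q_k)}\}_{k\in\mathbb{Z}^n}\,\big\|_{l(q,s)},\qquad 0<q\le\infty,\ s\in\mathbb{R},
\end{equation*}
with constants depending only on $n,q,s$. So the continuous embedding is governed by two independent features: its behaviour ``at infinity'', which is precisely $l(q_1,s_1)\subset l(q_2,s_2)$, and its ``local'' behaviour, which is the finite-measure inclusion $L^{q_1}(Q)\subset L^{q_2}(Q)$ on a single cube and is responsible for the extra constraint $1/q_2\ge 1/q_1$ that has no discrete analogue.

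For sufficiency I would argue as follows. The alternative $s_2=s_1,\ q_2=q_1$ is trivial, so assume $s_2\le s_1$, $1/q_2\ge 1/q_1$, and $1/q_2+s_2/n<1/q_1+s_1/n$. Since each $Q_k$ has measure $1$ and $q_2\le q_1$, H\"older's inequality gives $\|f\|_{L^{q_2}(Q_k)}\le\|f\|_{L^{q_1}(Q_k)}$ for every $k$, hence by the equivalence above
\begin{equation*}
\|f\|_{L(q_2,s_2)}\sim\big\|\{\|f\|_{L^{q_2}(Q_k)}\}\big\|_{l(q_2,s_2)}\lesssim\big\|\{\|f\|_{L^{q_1}(Q_k)}\}\big\|_{l(q_2,s_2)}.
\end{equation*}
The hypotheses $s_2\le s_1$ and $1/q_2+s_2/n<1/q_1+s_1/n$ place us in the first case of Proposition~\ref{Sharpness of embedding, discrete form}, so $l(q_1,s_1)\subset l(q_2,s_2)$ and the last quantity is $\lesssim\big\|\{\|f\|_{L^{q_1}(Q_k)}\}\big\|_{l(q_1,s_1)}\sim\|f\|_{L(q_1,s_1)}$, as desired.

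For necessity, assuming $L(q_1,s_1)\subset L(q_2,s_2)$, I would first test against $f=\sum_k a_k\chi_{Q_k}$ for an arbitrary scalar sequence $\{a_k\}$; the equivalence then gives $\|\{a_k\}\|_{l(q_2,s_2)}\lesssim\|\{a_k\}\|_{l(q_1,s_1)}$, i.e.\ $l(q_1,s_1)\subset l(q_2,s_2)$, so by Proposition~\ref{Sharpness of embedding, discrete form} we land in one of its two cases. The second case ($s_2=s_1,\ q_2=q_1$) is already one of the two alternatives we want. In the first case ($s_2\le s_1$ and $1/q_2+s_2/n<1/q_1+s_1/n$) it remains to produce $1/q_2\ge 1/q_1$: if $q_1=\infty$ there is nothing to prove, while if $q_1<\infty$ I would restrict to functions supported in $Q_0$, where $\langle x\rangle\sim1$, so the embedding forces $L^{q_1}(Q_0)\subset L^{q_2}(Q_0)$; the function $f(x)=|x|^{-n/q_2}\chi_{\{|x|<1/4\}}$ (or $f(x)=|x|^{-\alpha}\chi_{\{|x|<1/4\}}$ with $0<\alpha<n/q_1$ when $q_2=\infty$) lies in $L^{q_1}(Q_0)\setminus L^{q_2}(Q_0)$ whenever $q_2>q_1$, which is impossible, so $q_2\le q_1$. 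Together with the first case this is exactly the first alternative in the statement.

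The discretization equivalence and the H\"older and test-function computations are routine. The only point that needs care is to keep the two genuinely different constraints apart — the ``infinity'' conditions inherited from Proposition~\ref{Sharpness of embedding, discrete form} versus the purely local condition $1/q_2\ge 1/q_1$ — and to handle the endpoint indices $q_i=\infty$ (and the quasi-Banach range $q_i<1$) directly in the test-function arguments.
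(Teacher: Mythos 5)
Your proposal is correct, and it reaches the same conclusion by a slightly different organization than the paper. The paper proves sufficiency directly in the continuous setting by H\"older's inequality (splitting $1/q_2=1/q_1+1/r$ and integrating $\langle x\rangle^{(s_2-s_1)r}$), obtains the local constraint $1/q_2\ge 1/q_1$ by testing against $\chi_{B(0,a)}$ and letting $a\to 0$, and then simply states that the remaining necessary conditions follow as in Proposition \ref{Sharpness of embedding, discrete form}. You instead make the passage to the discrete case explicit through the equivalence $\|f\|_{L(q,s)}\sim \big\|\{\|f\|_{L^q(Q_k)}\}\big\|_{l(q,s)}$, so that sufficiency becomes cube-wise H\"older (valid since $|Q_k|=1$ and $q_2\le q_1$) followed by the discrete embedding, and the conditions at infinity in the necessity part are inherited verbatim from Proposition \ref{Sharpness of embedding, discrete form} via the test functions $f=\sum_k a_k\chi_{Q_k}$; for the purely local condition you use a fixed power-type singularity in $Q_0$ (with the stated modification when $q_2=\infty$) instead of shrinking balls. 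Both local tests are interchangeable and your membership computations are right, so the argument is sound, including the endpoints $q_i=\infty$ and the range $q_i<1$. What your route buys is a clean and reusable separation of the behaviour at infinity (governed by the discrete proposition) from the local integrability constraint $1/q_2\ge 1/q_1$, very much in the spirit of the paper's Proposition \ref{Implication method}; the cost is having to state and verify the discretization norm equivalence, whereas the paper's direct H\"older and shrinking-ball arguments are shorter.
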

\begin{proof}
  The sufficiency can be verified by H\"{o}lder's inequality.
  In the necessity part, we take $f(x)=\chi_{B(0,a)}$ for $a\in (0,1)$.
  Then $1/q_2\geqslant 1/q_1$ follows by letting $a\rightarrow 0$ in
  \begin{equation*}
    a^{n/q_2}\sim \|f\|_{L(q_2,s_2)}\lesssim \|f\|_{L(q_1,s_1)}\sim a^{n/q_1}.
  \end{equation*}
  The rest of the proof is similarly as that in Proposition \ref{Sharpness of embedding, discrete form}.
\end{proof}

\begin{proposition}[Young's inequality, discrete form] \label{Sharpness of the Young's inequality, discrete form}
Suppose $0<q,q_1,q_2\leq \infty$. Then
\begin{equation*}
l^{q_1}\ast l^{q_2} \subset l^{q}
\end{equation*}
holds if and only if
\begin{equation*}
\begin{cases}
1+\frac{1}{q}\leq\frac{1}{q_1}+\frac{1}{q_2}
\\
\frac{1}{q}\leq \frac{1}{q_1},\frac{1}{q}\leq \frac{1}{q_2}.
\end{cases}
\end{equation*}
\end{proposition}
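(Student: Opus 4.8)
The plan is to characterize when $l^{q_1}\ast l^{q_2}\subset l^q$ by separately establishing the sufficiency from the stated two conditions and the necessity by testing on explicit sequences. For the sufficiency I would argue by interpolation/embedding from the two extreme cases. The first extreme is $1+1/q=1/q_1+1/q_2$, which is the genuine discrete Young inequality and follows from the continuous Young inequality applied to step functions, or directly from a Schur-test/Minkowski argument; one gets $\|f\ast g\|_{l^q}\lesssim\|f\|_{l^{q_1}}\|g\|_{l^{q_2}}$ exactly as in the classical proof. The second observation is that on $\mathbb{Z}^n$ one has the reverse-style embedding $l^{r_1}\subset l^{r_2}$ whenever $1/r_2\le 1/r_1$, i.e. for larger exponents. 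Starting from a triple satisfying $1+1/\tilde q=1/q_1+1/q_2$ with $1/\tilde q\ge 1/q$ (which is possible precisely because $1+1/q\le 1/q_1+1/q_2$) and also $1/\tilde q\le 1/q_1,\,1/q_2$ (forced once $1/q\le 1/q_1,1/q_2$), I would apply the genuine Young inequality to land in $l^{\tilde q}$ and then embed $l^{\tilde q}\subset l^q$. Care is needed to check that such an intermediate $\tilde q\in(0,\infty]$ can always be chosen under the two hypotheses — this is a short arithmetic verification, the only subtle point being the boundary where $1/q_1+1/q_2>1$ so the ``honest'' Young exponent would be negative, in which case one instead embeds the inputs down first.

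For the necessity I would use three families of test sequences. Taking $f=\delta_0$ (the unit mass at the origin) gives $f\ast g=g$, so $\|g\|_{l^q}\lesssim\|g\|_{l^{q_2}}$ for all $g$, which by Proposition~\ref{Sharpness of embedding, discrete form} (the unweighted case) forces $l^{q_2}\subset l^q$, i.e. $1/q\le 1/q_2$; symmetrically $1/q\le 1/q_1$. For the remaining inequality $1+1/q\le 1/q_1+1/q_2$, I would plug in $f=g=\chi_{\{|k|_\infty\le N\}}$. Then $f\ast g$ is comparable to $N^n$ on the cube $\{|k|_\infty\le N\}$ (and supported in $\{|k|_\infty\le 2N\}$), so $\|f\ast g\|_{l^q}\gtrsim N^n\cdot N^{n/q}=N^{n(1+1/q)}$, while $\|f\|_{l^{q_1}}\|g\|_{l^{q_2}}\sim N^{n/q_1}N^{n/q_2}$; letting $N\to\infty$ yields $1+1/q\le 1/q_1+1/q_2$. (For exponents $<1$ the sums are still finite and the same scaling holds with the natural modification, and for $q=\infty$ one reads $1+0\le 1/q_1+1/q_2$, consistent.)

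I expect the main obstacle to be the bookkeeping in the sufficiency direction: verifying that the intermediate exponent $\tilde q$ needed to bridge ``honest Young'' and ``$l^{\tilde q}\hookrightarrow l^q$'' always exists, handling uniformly the subcases $1/q_1+1/q_2\le 1$ versus $>1$ and the various endpoints $q,q_1,q_2\in\{1,\infty\}$, and making sure no case slips through where both hypotheses hold but no admissible $\tilde q$ is available. Once that case analysis is organized cleanly, each individual step is routine. The necessity direction is comparatively straightforward since all three required inequalities come from elementary scaling on indicator functions of cubes plus the already-proven unweighted embedding proposition.
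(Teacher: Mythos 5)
Your proposal is correct and follows essentially the same route as the paper: necessity via the delta-sequence test (giving $l^{q_i}\subset l^q$, hence $1/q\le 1/q_i$ by the unweighted case of Proposition \ref{Sharpness of embedding, discrete form}) plus the indicator-of-cube scaling test (giving $1+1/q\le 1/q_1+1/q_2$), and sufficiency by combining the classical Young inequality with $l^p$-embeddings through a case analysis on which exponents lie below $1$. The only cosmetic difference is that the paper embeds the inputs upward (choosing $r_i\ge q_i$ with $1+1/q=1/r_1+1/r_2$, and using $l^q\ast l^q\subset l^q$ or $l^1$ in the sub-unit cases) whereas you apply Young at the critical exponent and embed the output, which amounts to the same argument.
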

\begin{proof}
Since the necessity part
can be verified by the same method as in the proof of Theorem \ref{Sharpness of weighted Young's inequality, discrete form},
we only give the proof for the sufficiency.
If $q\leqslant 1$, we have $l^q\ast l^q\subset l^q$,
then the sufficiency can be verified by
  \begin{equation*}
    l^{q_1}\ast l^{q_2} \subset l^{q}\ast l^{q} \subset l^{q},
  \end{equation*}
  where we use $l^{q_1}\subset l^q$, $l^{q_2}\subset l^q$ by the fact $1/q\leqslant 1/q_1,1/q_2$.

If $q\geqslant 1$, $q_1\geqslant 1$, $q_2\leqslant 1$, we deduce
\begin{equation*}
  l^{q_1}\ast l^{q_2} \subset l^{q}\ast l^{1} \subset l^{q},
\end{equation*}
where we use $l^{q_2}\subset l^1$ in this case.
By the symmetry, the case $q\geqslant 1$, $q_2\geqslant 1$, $q_1\leqslant 1$ can be handled by the same way.

If $q\geqslant 1$, $q_1\geqslant 1$, $q_2\geqslant 1$, we can choose $r_j\geqslant 1 (j=1,2)$ such that
\begin{equation*}
  1+1/q=1/r_1+1/r_2,\ r_j\geqslant q_j(j=1,2).
\end{equation*}
Using Young's inequality, we obtain $l^{r_1}\ast l^{r_2} \subset l^{q}$.
It implies that
\begin{equation*}
  l^{q_1}\ast l^{q_2} \subset l^{r_1}\ast l^{r_2} \subset l^{q}
\end{equation*}
by using $l^{q_i}\subset l^{r_i}(i=1,2)$ in this case.
\end{proof}

\begin{proposition}[Young's inequality, continuous form] \label{Sharpness of the Young's inequality, continuous form}
Let $0<q,q_1,q_2 \leq \infty$. Then
\begin{equation*}
L^{q_1}\ast L^{q_2}\subset L^q
\end{equation*}
holds if and only if
\begin{equation*}
\begin{cases}
1+\frac{1}{q}= \frac{1}{q_1}+\frac{1}{q_2}
\\
\frac{1}{q}\leq \frac{1}{q_1},\frac{1}{q}\leq \frac{1}{q_2}.
\end{cases}
\end{equation*}
\end{proposition}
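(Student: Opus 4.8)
The plan is to prove the two directions separately, observing first that the three stated conditions are much more restrictive than they appear. Since the conditions include the equality $1+1/q=1/q_1+1/q_2$ together with $1/q\le 1/q_1$, we get $1/q_2=1+1/q-1/q_1\le 1$, and symmetrically $1/q\le 1/q_2$ forces $1/q_1\le 1$; hence $q_1,q_2\ge 1$, and then $1/q=1/q_1+1/q_2-1\le 1$, so $q\ge 1$ as well. Thus under the hypotheses all three indices lie in $[1,\infty]$, one has $1/q_1+1/q_2\ge 1$, and $1+1/q=1/q_1+1/q_2$; this is exactly the range of the classical (unweighted) Young inequality, which immediately yields $L^{q_1}\ast L^{q_2}\subset L^q$. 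So the sufficiency needs essentially no work beyond this remark, with only the routine check at the endpoints $q_1=\infty$ or $q=\infty$, where one index collapses to $1$ and the inequality reduces to translation invariance of the $L^q$ norm or to H\"older's inequality.

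For the necessity I would run two scaling tests against explicit nonnegative bumps. Fix $f_0,g_0\in C_c(\mathbb{R}^n)$, nonnegative and not identically zero; then $f_0\ast g_0$ is continuous, compactly supported, nonnegative and nonzero, so $0<\|f_0\ast g_0\|_{L^q}<\infty$, while $f_0\in L^{q_1}$ and $g_0\in L^{q_2}$ for every exponent. For the homogeneity test, put $f_\delta(x)=f_0(\delta x)$, $g_\delta(x)=g_0(\delta x)$; then $(f_\delta\ast g_\delta)(x)=\delta^{-n}(f_0\ast g_0)(\delta x)$, hence $\|f_\delta\ast g_\delta\|_{L^q}=\delta^{-n(1+1/q)}\|f_0\ast g_0\|_{L^q}$, whereas $\|f_\delta\|_{L^{q_1}}=\delta^{-n/q_1}\|f_0\|_{L^{q_1}}$ and $\|g_\delta\|_{L^{q_2}}=\delta^{-n/q_2}\|g_0\|_{L^{q_2}}$ (with the evident reading $\|f_\delta\|_{L^\infty}=\|f_0\|_{L^\infty}$ when an index is $\infty$). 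Plugging these into the assumed inequality $\|f_\delta\ast g_\delta\|_{L^q}\lesssim\|f_\delta\|_{L^{q_1}}\|g_\delta\|_{L^{q_2}}$ and letting $\delta\to 0$ and $\delta\to\infty$ forces the exponent balance $1+1/q=1/q_1+1/q_2$.

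Second, to obtain $1/q\le 1/q_1$ and $1/q\le 1/q_2$ I would use a mass-spreading test: take $g=g_0$ a fixed nonnegative bump supported in $B(0,1)$ with $\int g_0=1$, and $f=\chi_{B(0,R)}$. Then for $|x|\le R-1$ we have $(f\ast g)(x)=\int_{B(0,1)}g_0(y)\,dy=1$, so $\|f\ast g\|_{L^q}\gtrsim |B(0,R-1)|^{1/q}\sim R^{n/q}$ (and $\gtrsim 1$ when $q=\infty$, which is again $R^{n/q}$), whereas $\|f\|_{L^{q_1}}\sim R^{n/q_1}$ and $\|g\|_{L^{q_2}}$ is a fixed constant; the inequality then gives $R^{n/q}\lesssim R^{n/q_1}$ for all large $R$, i.e. $1/q\le 1/q_1$, and exchanging the roles of the two factors yields $1/q\le 1/q_2$. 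Combined with the homogeneity test, this shows the three conditions are necessary, completing the equivalence. There is no real obstacle in this argument: the only point requiring a little care is to keep the endpoint exponents in $\{1,\infty\}$ inside the same computation, which is handled by reading all the norm identities above in the obvious way.
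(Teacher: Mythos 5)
Your proposal is correct. The sufficiency argument coincides with the paper's: you observe that the three conditions force $q,q_1,q_2\ge 1$ together with $1+\frac{1}{q}=\frac{1}{q_1}+\frac{1}{q_2}$, and then invoke the classical Young inequality, which is exactly what the paper does. For the necessity the routes differ. The paper disposes of it in one line by appealing to ``the same method as in the proof of Theorem \ref{Sharpness of weighted Young's inequality, power weight}'', and that method obtains the scaling relation by dilation but derives the remaining conditions $\frac{1}{q}\le\frac{1}{q_1}$, $\frac{1}{q}\le\frac{1}{q_2}$ by passing to the discrete setting (the implication method of Proposition \ref{Implication method}) and using the discrete characterization, i.e.\ the embedding argument with a delta sequence. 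You instead stay entirely in the continuous setting: the dilation test $f_\delta(x)=f_0(\delta x)$, $g_\delta(x)=g_0(\delta x)$ gives $1+\frac{1}{q}=\frac{1}{q_1}+\frac{1}{q_2}$ by letting $\delta\to 0$ and $\delta\to\infty$, and the ``mass-spreading'' test $f=\chi_{B(0,R)}$ against a fixed normalized bump $g_0$ gives $R^{n/q}\lesssim R^{n/q_1}$ for large $R$, hence $\frac{1}{q}\le\frac{1}{q_1}$, and symmetrically $\frac{1}{q}\le\frac{1}{q_2}$; the endpoint readings for indices equal to $\infty$ are handled correctly. This is a perfectly valid and in fact more self-contained argument: it avoids the discretization machinery altogether, which is a small advantage here since this proposition sits among the preliminaries that feed into the proofs of the main theorems, whereas the paper's approach has the virtue of uniformity, reusing one template (dilation plus reduction to the discrete case) across all the weighted and unweighted statements.
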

\begin{proof}
  The necessity can be verified by the same method as in the proof of Theorem \ref{Sharpness of weighted Young's inequality, power weight}.
  Observing the conditions is equivalent to $1/q=1/q_1+1/q_2$, $q,q_1,q_2\geqslant 1$,
  the sufficiency follows by the classical Young's inequality.
\end{proof}

\begin{proposition}[Integral capability of weight $\langle k\rangle$] \label{Integral capability of weight, discrete form}
Suppose $s>0$, $s\leq s_1$, $s\leq s_2$. Then
\begin{equation*}
\begin{split}
&\left\{t\in (0,\infty]: \left\|\frac{\langle k\rangle^s}{\langle k-j\rangle^{s_1}\langle j\rangle^{s_2}} \right\|_{l_j^t}\lesssim 1,~for~all~k\in \mathbb{Z}^n \right\}
\\
&\qquad\qquad=
\begin{cases}
(\frac{n}{s_1+s_2-s},\infty],~\text{if}~s=s_1~or~s=s_2,
\\
[\frac{n}{s_1+s_2-s},\infty],~\text{if}~s<s_1~and~s<s_2.
\end{cases}
\end{split}
\end{equation*}
\end{proposition}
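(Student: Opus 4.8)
The plan is to recast the problem as a one–parameter comparison and then locate the two endpoints of the resulting interval. Write $a^{(k)}_j:=\langle k\rangle^{s}\langle k-j\rangle^{-s_1}\langle j\rangle^{-s_2}$, so that $t$ lies in the set exactly when $\sup_{k}\|a^{(k)}\|_{l^t_j}\lesssim 1$. First I would record two preliminary observations. Peetre's inequality $\langle k\rangle\lesssim\langle k-j\rangle\langle j\rangle$ together with $s\le s_1$, $s\le s_2$ (and $\langle k-j\rangle,\langle j\rangle\ge 1$) gives $a^{(k)}_j\lesssim 1$ uniformly in $j,k$, so $t=\infty$ always belongs to the set, as both alternatives require; and since $a^{(k)}_j\lesssim 1$, for $t\le t'$ one has $(a^{(k)}_j)^{t'}\lesssim(a^{(k)}_j)^{t}$ and hence $\|a^{(k)}\|_{l^{t'}}\lesssim\|a^{(k)}\|_{l^t}$ with a constant independent of $k$. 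Thus the set is an interval with right endpoint $\infty$, and it remains only to find its left endpoint and decide whether it is attained. Put $t_0:=n/(s_1+s_2-s)$; since $s_1+s_2-s\ge s_2>0$ we have $0<t_0<\infty$, and one checks that $t<t_0$ forces $s_2 t<n$ (because $s\le s_1$) and likewise $s_1 t<n$, while $(s_1+s_2)t_0>n$ because $s>0$.

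For the inclusion $\supseteq$ (sufficiency) I would rewrite the condition, for finite $t$, as $\langle k\rangle^{st}S_k(t)\lesssim 1$ with $S_k(t):=\sum_{j}\langle k-j\rangle^{-s_1 t}\langle j\rangle^{-s_2 t}$, and split $\mathbb{Z}^n$ into the zones $\langle j\rangle\lesssim\langle k\rangle$ (where $\langle k-j\rangle\sim\langle k\rangle$), $\langle k-j\rangle\lesssim\langle k\rangle$ (where $\langle j\rangle\sim\langle k\rangle$), and the complement, splitting the latter further into the piece $\langle j\rangle\sim\langle k\rangle$ and the far piece $\langle j\rangle\sim\langle k-j\rangle\gg\langle k\rangle$. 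Using the elementary estimate $\sum_{\langle m\rangle\le R}\langle m\rangle^{-a}\sim R^{(n-a)\vee 0}$, with an extra $\log R$ when $a=n$, a short calculation shows that the first zone contributes $\lesssim\langle k\rangle^{(s-s_1)t+(n-s_2 t)\vee 0}$, the second its symmetric counterpart, and the complement $\lesssim\langle k\rangle^{n-(s_1+s_2-s)t}$ (the far sum converging since $(s_1+s_2)t\ge(s_1+s_2)t_0>n$); each of these exponents is $\le 0$ exactly when $t\ge t_0$. Moreover for $t>t_0$ any borderline logarithm (which can occur only when some $s_i t=n$) forces $s<s_j$ and is therefore killed by the accompanying strictly negative power $\langle k\rangle^{(s-s_j)t}$, while for $t=t_0$ with $s<s_1$ and $s<s_2$ one has $s_1 t_0<n$, $s_2 t_0<n$ so no logarithm appears. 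Since $S_k(t)$ is a fixed finite sum for bounded $k$ (as $t\ge t_0>n/(s_1+s_2)$), this yields $\sup_k\|a^{(k)}\|_{l^t}\lesssim 1$ for every $t>t_0$, and also for $t=t_0$ when $s<s_1$ and $s<s_2$.

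For the reverse inclusion (necessity) I would keep, in $S_k(t)$, only the terms with $\langle j\rangle\lesssim\langle k\rangle$, where $\langle k-j\rangle\lesssim\langle k\rangle$, to get $\|a^{(k)}\|_{l^t_j}\gtrsim\langle k\rangle^{s-s_1}\bigl(\sum_{\langle j\rangle\lesssim\langle k\rangle}\langle j\rangle^{-s_2 t}\bigr)^{1/t}$. If $t<t_0$ then $0<s_2 t<n$, the inner sum is $\gtrsim\langle k\rangle^{n-s_2 t}$, and the right side is $\gtrsim\langle k\rangle^{s-s_1-s_2+n/t}$ with a positive exponent (again since $t<t_0$), so letting $|k|\to\infty$ shows $t$ is not in the set. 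If $s=s_1$ (and symmetrically if $s=s_2$) and $t=t_0=n/s_2$, then $\langle k\rangle^{s-s_1}=1$ and $s_2 t_0=n$, so the right side is $\gtrsim(\log\langle k\rangle)^{1/t_0}\to\infty$, so $t_0$ is not in the set in the degenerate case. Combining with the sufficiency part gives exactly the stated description.

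I expect the main obstacle to be the bookkeeping of the borderline logarithmic factors in the sufficiency step: deciding precisely when a sum $\sum_{\langle m\rangle\le\langle k\rangle}\langle m\rangle^{-s_i t}$ picks up a $\log\langle k\rangle$ and whether that factor is absorbed by an accompanying negative power of $\langle k\rangle$. This is exactly the phenomenon that makes the interval closed at $t_0$ when $s<s_1$ and $s<s_2$ and open there when $s=s_1$ or $s=s_2$; the rest is a routine zone decomposition combined with Peetre's inequality.
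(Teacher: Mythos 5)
Your argument is correct and follows essentially the same route as the paper: the same decomposition of $\mathbb{Z}^n$ into the regions $|j|\lesssim|k|$, $|k-j|\lesssim|k|$, the far region $|j|\gtrsim|k|$ and the remaining comparable region, with each piece bounded exactly when $t\ge n/(s_1+s_2-s)$ and the borderline logarithm (arising when $s_i t=n$) surviving precisely in the degenerate cases $s=s_1$ or $s=s_2$ at the endpoint. You simply carry out in detail, including the lower bounds for necessity, what the paper asserts with ``one can verify.''
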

\begin{proof}
By a direct calculation,
\begin{equation*}
\begin{split}
&\sum_{j\in \mathbb{Z}^n}\frac{\langle k\rangle^{st}}{\langle
k-j\rangle^{s_1t}\langle j\rangle^{s_2t}} \\
= & \sum_{|j|\leq {|k|}/{2}}+\sum_{|j-k|\leq {|k|}/{2}
}+\sum_{|j|\geq 2k}+\sum_{\text{others}} \\
\sim & \sum_{|j|\leq {|k|}/{2}}\frac{\langle k\rangle^{(s-s_1)t}}{
\langle j\rangle^{s_2t}} + \sum_{|j|\leq {|k|}/{2}}\frac{\langle
k\rangle^{(s-s_2)t}}{\langle j\rangle^{s_1t}} + \sum_{|j|\geq 2|k|}\frac{
\langle k\rangle^{st}}{\langle j\rangle^{(s_1+s_2)t}} + \langle
k\rangle^{n+(s-s_1-s_2)t} \\
=:&I+I\!I+I\!I\!I+I\!V.
\end{split}%
\end{equation*}
One can verify that $I\!I\!I$ and $I\!V$ have uniform bounds on $k$ if and
only if $t\geq {n}/{(s_1+s_2-s)}$. On the other hand, under the condition
$t\geq {n}/{(s_1+s_2-s)}$, the term $I$ also has uniform bounds on $k$
, unless $t={n}/{s_2}$ and $s=s_1$. Similarly, one can verify that $I\!I$
also has uniform bounds on $k$ unless $t={n}/{s_1}$ and $s=s_2$.
\end{proof}

The following two propositions can be verified by the similar technique as in the proof of Proposition \ref{Integral capability of weight, discrete form}, so we omit their proofs.
\begin{proposition}[Integral capability of weight $\langle x\rangle$] \label{Integral capability of weight, continuous form}
Suppose $s>0$, $s\leq s_1$, $s\leq s_2$. Then
\begin{equation*}
\begin{split}
&\left\{t: \left\|\frac{\langle x\rangle^s}{\langle x-y\rangle^{s_1}\langle y\rangle^{s_2}} \right\|_{L_y^t}
\lesssim 1,~for~all~x\in \mathbb{R}^n \right\}
\\
&\qquad\qquad=
\begin{cases}
(\frac{n}{s_1+s_2-s},\infty],~\text{if}~s=s_1~\text{or}~s=s_2,
\\
[\frac{n}{s_1+s_2-s},\infty],~\text{if}~s<s_1~\text{and}~s<s_2.
\end{cases}
\end{split}
\end{equation*}
\end{proposition}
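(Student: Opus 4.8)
The plan is to mimic the dyadic decomposition of the $y$-variable used in the proof of Proposition~\ref{Integral capability of weight, discrete form}, with sums replaced by integrals. First I would dispose of the trivial range: when $|x|\lesssim 1$ we have $\langle x\rangle\sim 1$, and since $s_1,s_2\ge s>0$ the functions $\langle y\rangle^{-s_2 t}$ and $\langle x-y\rangle^{-s_1 t}$ are locally integrable for every $t>0$, so the only constraint comes from the tail $|y|\to\infty$, namely $(s_1+s_2)t>n$, which is implied by $t\ge n/(s_1+s_2-s)$ because $s>0$. Hence the whole question is decided by the behaviour as $|x|\to\infty$.

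For $|x|=:R$ large I would split the $y$-space $\mathbb{R}^n$ into $\{|y|\le R/2\}$, $\{|x-y|\le R/2\}$, $\{|y|\ge 2R\}$ and the remaining set, exactly as in the discrete proof. On the remaining set $\langle y\rangle\sim\langle x-y\rangle\sim R$ and the volume is $\sim R^n$, so its contribution to the $t$-th power of the norm is $\sim R^{\,n+(s-s_1-s_2)t}$; on $\{|y|\ge 2R\}$ one has $\langle x-y\rangle\sim\langle y\rangle$ and integrating $R^{st}\langle y\rangle^{-(s_1+s_2)t}$ over that region gives the same $R^{\,n+(s-s_1-s_2)t}$ (the tail converges once $t\ge n/(s_1+s_2-s)$, since then $(s_1+s_2)t>n$). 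These two pieces are uniformly bounded in $x$ iff $n+(s-s_1-s_2)t\le 0$, i.e. $t\ge n/(s_1+s_2-s)$; letting $R\to\infty$ shows this inequality is also necessary. This fixes the admissible set of $t$ up to its left endpoint.

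The subtle pieces — and the part I expect to be the main obstacle — are $\{|y|\le R/2\}$ and, symmetrically, $\{|x-y|\le R/2\}$. On the first, $\langle x-y\rangle\sim R$, so the contribution is $R^{(s-s_1)t}\int_{|y|\le R/2}\langle y\rangle^{-s_2 t}\,dy$, and the inner integral behaves like $R^{\,n-s_2 t}$, $\log R$, or $1$ according as $s_2 t<n$, $s_2 t=n$, or $s_2 t>n$. In the first case this is again $R^{\,n+(s-s_1-s_2)t}$; in the third it is $R^{(s-s_1)t}=O(1)$ since $s\le s_1$; the borderline case $t=n/s_2$ gives $R^{(s-s_1)t}\log R$, which is bounded iff $s<s_1$ and diverges logarithmically iff $s=s_1$. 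The decisive observation is that $n/s_2$ coincides with the left endpoint $n/(s_1+s_2-s)$ \emph{precisely} when $s=s_1$; thus this logarithm is exactly what removes the endpoint from the admissible set when $s=s_1$, and the mirror analysis of $\{|x-y|\le R/2\}$ (with $s_1$ in place of $s_2$) does the same when $s=s_2$. Collecting the four estimates for sufficiency, and for necessity lower-bounding the whole integral by whichever single region blows up, yields the stated set: $[\,n/(s_1+s_2-s),\infty]$ when $s<s_1$ and $s<s_2$, and $(\,n/(s_1+s_2-s),\infty]$ when $s=s_1$ or $s=s_2$. The care is entirely in the bookkeeping at $t=n/(s_1+s_2-s)$: one must check that the borderline $t$ falls in the $s_2 t\ge n$ (resp. $s_1 t\ge n$) regime and track whether the accompanying power of $R$ is negative, zero, or carries a logarithm.
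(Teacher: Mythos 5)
Your argument is correct and is essentially the paper's own approach: the paper omits the proof of this proposition, stating it follows by the same technique as the discrete Proposition \ref{Integral capability of weight, discrete form}, namely the four-region splitting $\{|y|\le R/2\}$, $\{|x-y|\le R/2\}$, $\{|y|\ge 2R\}$, ``others'' with the same endpoint/logarithm bookkeeping you carry out. The only (trivial) point left implicit is $t=\infty$, which follows from $\langle x\rangle^{s}\lesssim \langle x-y\rangle^{s}\langle y\rangle^{s}\le \langle x-y\rangle^{s_1}\langle y\rangle^{s_2}$.
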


\begin{proposition}[Integral capability of weight $|x|$] \label{Integral capability of weight, power weight}
Suppose $s>0$, $s< s_1$, $s< s_2$. Then
\begin{equation*}
\left\|\frac{|x|^s}{|x-y|^{s_1}|y|^{s_2}} \right\|_{L_y^{\frac{n}{s_1+s_2-s}}}\lesssim 1
\end{equation*}
for all $x\in \mathbb{R}^n $.
\end{proposition}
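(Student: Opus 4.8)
The plan is to exploit the exact homogeneity of the weight $|x|^{s}$, which makes the power-weight case cleaner than its $\langle x\rangle$-analogue in Proposition \ref{Integral capability of weight, continuous form}. Set $t=\frac{n}{s_1+s_2-s}$; since $s_1+s_2-s>s_1>0$ by hypothesis, $t\in(0,\infty)$ is well defined (and possibly $t<1$, which is harmless, as we only need to bound a fixed quasi-norm by a constant). The statement is trivial when $x=0$, since then the integrand vanishes a.e.\ (using $s>0$), so fix $x\neq0$ and substitute $y=|x|z$. Writing $e=x/|x|$, one has $|x-y|=|x|\,|e-z|$ and $|y|=|x|\,|z|$, so that
\[
\int_{\mathbb{R}^n}\Big(\frac{|x|^{s}}{|x-y|^{s_1}|y|^{s_2}}\Big)^{t}\,dy
=|x|^{\,t(s-s_1-s_2)+n}\int_{\mathbb{R}^n}\frac{dz}{|e-z|^{s_1 t}|z|^{s_2 t}},
\]
and the exponent of $|x|$ is $t(s-s_1-s_2)+n=-n+n=0$ by the choice of $t$. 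Hence the left-hand side equals, up to its $1/t$-th power, the quantity $\int_{\mathbb{R}^n}|e-z|^{-s_1 t}|z|^{-s_2 t}\,dz$, which is independent of $x$ (and, by rotation invariance, of the unit vector $e$).

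It then remains to check that this last integral converges, by examining the three places where the integrand may blow up. Near $z=0$ the factor $|e-z|^{-s_1 t}$ is bounded and $\int_{|z|\le1/2}|z|^{-s_2 t}\,dz<\infty$ exactly when $s_2 t<n$, i.e.\ $s_2<s_1+s_2-s$, i.e.\ $s<s_1$. Symmetrically, near $z=e$ finiteness holds precisely when $s_1 t<n$, i.e.\ $s<s_2$. Finally, for $|z|$ large both $|e-z|$ and $|z|$ are $\sim|z|$, so $\int_{|z|\ge2}|z|^{-(s_1+s_2)t}\,dz<\infty$ exactly when $(s_1+s_2)t>n$, i.e.\ $s>0$. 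All three inequalities are among the hypotheses and each is strict, so no critical exponent and hence no borderline logarithmic divergence occurs; the integral is therefore finite, and the proposition follows.

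There is essentially no serious obstacle here: the computation is short once one recognizes that $t=n/(s_1+s_2-s)$ is precisely the exponent making the $|x|$-power cancel. If one instead prefers to mirror the proof of Proposition \ref{Integral capability of weight, discrete form}, the alternative is to split $\mathbb{R}^n$ into the regions $\{|y|\le|x|/2\}$, $\{|y-x|\le|x|/2\}$, $\{|y|\ge2|x|\}$ and the complementary annular region; on each region one of the three factors is comparable to a power of $|x|$, and integrating the remaining power against $dy$ reproduces exactly the conditions $s<s_1$, $s<s_2$, $s>0$, with strictness again serving to avoid the critical exponents at which the dyadic integrals would diverge only logarithmically. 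I would present the scaling argument as the main proof and relegate the dyadic version to a parenthetical remark.
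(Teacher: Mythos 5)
Your proof is correct, and it takes a different route from the paper's. The paper never writes this proof out: it only remarks that the proposition ``can be verified by the similar technique as in the proof of Proposition \ref{Integral capability of weight, discrete form}'', i.e.\ by splitting into the regions $\{|y|\le |x|/2\}$, $\{|x-y|\le |x|/2\}$, $\{|y|\ge 2|x|\}$ and the complementary set, which is exactly the alternative you relegate to a parenthetical remark. Your main argument instead exploits the exact homogeneity of the weight $|x|^{s}$: after the substitution $y=|x|z$ the choice $t=n/(s_1+s_2-s)$ makes the power of $|x|$ cancel identically, rotation invariance removes the dependence on $e=x/|x|$, and the claim reduces to the convergence of the single fixed integral $\int_{\mathbb{R}^n}|e-z|^{-s_1t}|z|^{-s_2t}\,dz$; your three local checks (at $z=0$, $z=e$, and at infinity) do translate precisely into $s<s_1$, $s<s_2$, $s>0$, and the intermediate region $\{1/2\le|z|\le2\}$ away from $e$ is harmless since the integrand is bounded there (worth one explicit sentence in a final write-up). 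Your handling of $x=0$ (the integrand vanishes because $s>0$) and of possibly $t<1$ (only a bound on a fixed quasi-norm is needed, no triangle inequality) is also fine. What the scaling proof buys is brevity, automatic uniformity in $x$, and no borderline logarithmic analysis, since the strict inequalities in the hypotheses keep all three exponents noncritical; its only limitation is that it is special to the homogeneous weight $|x|^{s}$ and cannot be recycled for the inhomogeneous weights $\langle k\rangle$, $\langle x\rangle$ of Propositions \ref{Integral capability of weight, discrete form} and \ref{Integral capability of weight, continuous form}, which is presumably why the authors preferred the one decomposition technique that covers all three statements.
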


\section{Discrete weighted Young's inequality---Proof of Theorem \ref{Sharpness of weighted Young's inequality, discrete form}}

\subsection{Notations and procedure of the proof}$~$\\

We start this section by defining the set
\begin{equation*}
A=\left\{ (\mathbf{q},\mathbf{s})\in \lbrack 1,\infty]^{3}\times \mathbb{R}%
^{3}:~l(q_{1},s_{1})\ast l(q_{2},s_{2})\subset l(q,s)\right\} .
\end{equation*}%
We now describe the strategy to characterize the set \ $A.$

Use $A_{i}$ to denote the set of all $(\mathbf{q},\mathbf{s})\in \lbrack
1,\infty]^{3}\times \mathbb{R}^{3}$ satisfying condition $\mathcal{A}_{i}$,
respectively, for $i=1,\,2,\,3,\,4$, $\ X$ to denote the set of all $(\mathbf{q},%
\mathbf{s})\in \lbrack 1,\infty]^{3}\times \mathbb{R}^{3}$ satisfying
\begin{equation*}
\begin{cases}
s\leq s_1,~s\leq s_2,~0\leq s_1+s_2;\\
1+\Big(\frac{1}{q}+\frac{s}{n}\Big)\vee 0\leq\Big(\frac{1}{q_1}+\frac{s_1}{n}\Big)\vee 0+\Big(\frac{1}{q_2}+\frac{s_2}{n}\Big)\vee 0;\\
\frac{1}{q}+\frac{s}{n}\leq\frac{1}{q_1}+\frac{s_1}{n},~\frac{1}{q}+\frac{s}{n}\leq\frac{1}{q_2}+\frac{s_2}{n},~
1\leq \frac{1}{q_1}+\frac{s_1}{n}+\frac{1}{q_2}+\frac{s_2}{n};\\
(q,s)=(q_1,s_1), ~\text{if}~ \frac{1}{q}+\frac{s}{n}=\frac{1}{q_1}+\frac{s_1}{n}; \\
(q,s)=(q_2,s_2), ~\text{if}~ \frac{1}{q}+\frac{s}{n}=\frac{1}{q_2}+\frac{s_2}{n};\\
(q'_1, -s_1)=(q_2,s_2),~\text{if}~ 1= \frac{1}{q_1}+\frac{s_1}{n}+\frac{1}{q_2}+\frac{s_2}{n}.
\end{cases}
\end{equation*}
We also use $X_{1}$ to denote the set of all pairs $(\mathbf{q},\mathbf{s}%
)\in X$ satisfying
\begin{equation*}
1+\Big(\frac{1}{q}+\frac{s}{n}\Big)\vee 0<\Big(\frac{1}{q_{1}}+\frac{s_{1}}{n%
}\Big)\vee 0+\Big(\frac{1}{q_{2}}+\frac{s_{2}}{n}\Big)\vee 0.
\end{equation*}%
By this notation, one easily checks that $X_{1}=A_{1}$.
We use $X_{2}$ to denote the set of all $(\mathbf{q},\mathbf{s})\in X\backslash X_1$
satisfying
\begin{equation*}
\frac{1}{q}+\frac{s}{n}=\frac{1}{q_1}+\frac{s_1}{n}~\text{or}~\frac{1}{q}+\frac{s}{n}=\frac{1}{q_2}+\frac{s_2}{n}
~\text{or}~\frac{1}{q}+\frac{s}{n}=0.
\end{equation*}
We use $X_{3}$ to denote the set of all $(\mathbf{q},\mathbf{s})\in X\backslash (X_1\cup X_2)$
satisfying
\begin{equation*}
\frac{1}{q}+\frac{s}{n}<0.
\end{equation*}
Use $X_{4}$ to denote the set of all $(\mathbf{q},\mathbf{s})\in X\backslash (X_1\cup X_2)$
satisfying
\begin{equation*}
\frac{1}{q}+\frac{s}{n}>0.
\end{equation*}
By the above definition, we also obtain that $X_3$ is the set of all
$(\mathbf{q},\mathbf{s})\in \lbrack 1,\infty]^{3}\times \mathbb{R}^{3}$ satisfying
\begin{equation*}
\begin{cases}
s\leq s_1,~s\leq s_2,~0\leq s_1+s_2,\\
1+\Big(\frac{1}{q}+\frac{s}{n}\Big)\vee 0=\Big(\frac{1}{q_1}+\frac{s_1}{n}\Big)\vee 0+\Big(\frac{1}{q_2}+\frac{s_2}{n}\Big)\vee 0,\\
\frac{1}{q}+\frac{s}{n}<\frac{1}{q_1}+\frac{s_1}{n},~\frac{1}{q}+\frac{s}{n}<\frac{1}{q_2}+\frac{s_2}{n},
~1\leq \frac{1}{q_1}+\frac{s_1}{n}+\frac{1}{q_2}+\frac{s_2}{n},
~\frac{1}{q}+\frac{s}{n}<0,~\\
(q'_1, -s_1)=(q_2,s_2)~\text{if}~ 1= \frac{1}{q_1}+\frac{s_1}{n}+\frac{1}{q_2}+\frac{s_2}{n};
\end{cases}
\end{equation*}
$X_{4}$ is the set of all
$(\mathbf{q},\mathbf{s})\in \lbrack 1,\infty]^{3}\times \mathbb{R}^{3}$ satisfying
\begin{equation*}
\begin{cases}
s\leq s_{1},~s\leq s_{2},~0\leq s_{1}+s_{2}, \\
1+\frac{1}{q}+\frac{s}{n}=\frac{1}{q_{1}}+\frac{s_{1}}{n}+\frac{1}{q_{2}}+
\frac{s_{2}}{n}, \\
\frac{1}{q}+\frac{s}{n}<\frac{1}{q_{1}}+\frac{s_{1}}{n},~\frac{1}{q}+\frac{s
}{n}<\frac{1}{q_{2}}+\frac{s_{2}}{n},~\frac{1}{q}+\frac{s}{n}>0.
\end{cases}
\end{equation*}
The set $X$ \ now is the union of mutually disjoint
sets $X_{j}:$%
\begin{equation*}
X=\bigcup_{j=1}^{4}X_{j},
\end{equation*}%
where $\ X_{i}\cap X_{j}=\emptyset $ whenever $i\neq j$.

To prove Theorem 1.1, it suffices to show $A\subset X$ and $A\cap
X_{j}=A_{j} $. Then the conclusion of Theorem 1.1 follows from the easy fact
\begin{equation*}
A=A\cap X=A\cap \left( \bigcup_{i=1}^{4}X_{i}\right)
=\bigcup_{i=1}^{4}(A\cap X_{i})=\bigcup_{i=1}^{4}A_{i}.
\end{equation*}

\subsection{The proof of $A\subset X$.}$~$
\\
We define two sequences \ $\{a_{k,N}\}_{k\in \mathbb{Z}^{n}}$ \ and \ $\{b_{k,N}\}_{k\in \mathbb{Z}^{k}}$ \ for each natural number \ $N,$ where
\begin{equation*}
a_{k,N}=
\begin{cases}
1, & \text{for}~|k|\leq N, \\
0, & \text{otherwise},
\end{cases}%
\hspace{10mm}b_{k,N}=
\begin{cases}
1, & \text{for}~|k|\leq 2N, \\
0, & \text{otherwise}.
\end{cases}
\end{equation*}
For $k\in \mathbb{Z}^{n},~|k|\leq N$, we have
\begin{equation*}
\sum_{j\in \mathbb{Z}^{n}}a_{k-j,N}b_{j,N}\geq \sum_{|j|\leq
2N}a_{k-j,N}\geq \sum_{|j|\leq N}a_{j,N}\geq N^{n},
\end{equation*}
and
\begin{equation*}
\Vert \{a_{k,N}\}\ast\{b_{k,N}\}\Vert _{l(q,s)}\geq N^{n}\Big(\sum_{|k|\leq
N}\langle k\rangle ^{sq}\Big)^{{1}/{q}}.
\end{equation*}
So, by the definition of \ $A$ \ we deduce
\begin{equation}\label{for proof, 11}
N^{n}\Big(\sum_{|k|\leq N}\langle k\rangle ^{sq}\Big)^{{1}/{q}}\lesssim
\Big(\sum_{|k|\leq N}\langle k\rangle ^{s_{1}q_{1}}\Big)^{{1}/{q_{1}}}
\Big(\sum_{|k|\leq 2N}\langle k\rangle ^{s_{2}q_{2}}\Big)^{{1}/{q_{2}}}.
\end{equation}
For any $\delta >0$, we have
\begin{equation*}
N^{n[({1}/{q}+{s}/{n})\vee 0]-\delta }\lesssim \left( \sum_{|k|\leq
N}\langle k\rangle ^{sq}\right) ^{{1}/{q}}\lesssim N^{n[({1}/{q}+
{s}/{n})\vee 0]+\delta }.
\end{equation*}
Combining it with (\ref{for proof, 11}),
we then have that
\begin{equation*}
N^{n}\cdot N^{n[({1}/{q}+{s}/{n})\vee 0]-\delta }\lesssim
N^{n[({1}/{q_{1}}+{s_{1}}/{n})\vee 0]+\delta }\cdot N^{n[({1}/{
q_{2}}+{s_{2}}/{n})\vee 0]+\delta }
\end{equation*}
as $N\rightarrow \infty $, which implies
\begin{equation*}
n+n[(\frac{1}{q}+\frac{s}{n})\vee 0]-\delta \leq n[(\frac{1}{q_{1}}+\frac{
s_{1}}{n})\vee 0]+\delta +n[(\frac{1}{q_{2}}+\frac{s_{2}}{n})\vee 0]+\delta .
\end{equation*}
Letting $\delta \rightarrow 0$, we further obtain
\begin{equation}\label{for proof, preliminary, 1}
1+\Big(\frac{1}{q}+\frac{s}{n}\Big)\vee 0\leq \Big(\frac{1}{q_{1}}+\frac{
s_{1}}{n}\Big)\vee 0+\Big(\frac{1}{q_{2}}+\frac{s_{2}}{n}\Big)\vee 0.
\end{equation}

On the other hand, we may take $b_{0}=1$ and $b_{k}=0\ (k\neq 0)$ in the
inequality
\begin{equation*}
\Vert \{a_{k}\}\ast \{b_{k}\}\Vert _{l(q,s)}\lesssim \Vert \{a_{k}\}\Vert
_{l(q_{1},s_{1})}\Vert \{b_{k}\}\Vert _{l(q_{2},s_{2})}
\end{equation*}
to deduce
\begin{equation*}
\Vert \{a_{k}\}\Vert _{l(q,s)}\lesssim \Vert \{a_{k}\}\Vert
_{l(q_{1},s_{1})},
\end{equation*}
which implies that
\begin{equation*}
l(q_{1},s_{1})\subset l(q,s).
\end{equation*}
A similar argument then gives the inclusion
\begin{equation*}
l(q_{2},s_{2})\subset l(q,s).
\end{equation*}
Using an easy dual argument, we have
\begin{equation}\label{for proof, 12}
\Vert \{a_{k}\}\ast \{b_{k}\}\Vert _{l(q_{1}^{\prime },-s_{1})}\lesssim
\Vert \{a_{k}\}\Vert _{l(q^{\prime },-s)}\Vert \{b_{k}\}\Vert
_{l(q_{2},s_{2})}.
\end{equation}
In fact, by the assumption $l(q_1,s_1)\ast l(q_2,s_2)\subset l(q,s)$, we obtain
\begin{equation*}
  \begin{split}
    |\sum_{k\in \mathbb{Z}^n}\sum_{j\in \mathbb{Z}^n}c_{k-j}b_ja_k|
    \leqslant &
    \|\{c_k\}\ast\{b_k\}\|_{l(q,s)}\|\{a_k\}\|_{l(q',-s)}
    \\
    \leqslant &
    \|\{c_k\}\|_{l(q_1,s_1)}\|\{b_k\}\|_{l(q_2,s_2)}\|\{a_k\}\|_{l(q',-s)},
  \end{split}
\end{equation*}
where we use the H\"{o}lder inequality in the first inequality.
Observing that
\begin{equation*}
  \sum_{k\in \mathbb{Z}^n}\sum_{j\in \mathbb{Z}^n}c_{k-j}b_ja_k
  =
  \sum_{k\in \mathbb{Z}^n}\sum_{j\in \mathbb{Z}^n}c_jb_{k-j}a_k
  =
\sum_{j\in \mathbb{Z}^n}c_j\sum_{k\in \mathbb{Z}^n}b_{k-j}a_k,
\end{equation*}
we actually have
\begin{equation*}
  \begin{split}
    |\sum_{j\in \mathbb{Z}^n}c_j\sum_{k\in \mathbb{Z}^n}b_{k-j}a_k|
    \leqslant
    \|\{c_k\}\|_{l(q_1,s_1)}\|\{b_k\}\|_{l(q_2,s_2)}\|\{a_k\}\|_{l(q',-s)}.
  \end{split}
\end{equation*}
By the arbitrary of $\{c_k\}_{k\in \mathbb{Z}^n}$, we obtain
\begin{equation*}
  \begin{split}
    \|\{\sum_{k\in \mathbb{Z}^n}b_{k-j}a_k\}\|_{l_j(q_1',-s_1)}
    \leqslant
    \|\{b_k\}\|_{l(q_2,s_2)}\|\{a_k\}\|_{l(q',-s)}.
  \end{split}
\end{equation*}
Rewriting $b_{j}=\tilde{b}_{-j}$, we obtain $\sum_{k\in \mathbb{Z}^n}b_{k-j}a_k=(\{a_k\}\ast\{b_k\})(j)$.
Then
\begin{equation*}
  \begin{split}
    \|\{a_k\}\ast\{\tilde{b}_k\}\|_{l(q_1',-s_1)}
    \leqslant &
    \|\{b_k\}\|_{l(q_2,s_2)}\|\{a_k\}\|_{l(q',-s)}
    \\
    = &
    \|\{a_k\}\|_{l(q',-s)}\|\{\tilde{b}_k\}\|_{l(q_2,s_2)},
  \end{split}
\end{equation*}
which is just the inequality (\ref{for proof, 12}).

Proceed the argument as above, we obtain that
\begin{equation*}
l(q_{2},s_{2})\subset l(q_{1}^{\prime },-s_{1}).
\end{equation*}
Also, invoking Proposition \ref{Sharpness of embedding, discrete form}, we can get
\begin{equation*}
\frac{1}{q}+\frac{s}{n}\leq \frac{1}{q_{1}}+\frac{s_{1}}{n}~,~\frac{1}{q}+%
\frac{s}{n}\leq \frac{1}{q_{2}}+\frac{s_{2}}{n}~,~\frac{1}{q_{1}^{\prime }}+%
\frac{-s_{1}}{n}\leq \frac{1}{q_{2}}+\frac{s_{2}}{n},
\end{equation*}
and
\begin{equation*}
s\leq s_{1},~s\leq s_{2},~-s_{1}\leq s_{2},
\end{equation*}
where
\begin{equation}\label{for proof, 13}
\begin{cases}
(q,s)=(q_{1},s_{1}),~\text{if}~\frac{1}{q}+\frac{s}{n}=\frac{1}{q_{1}}+\frac{
s_{1}}{n}; \\
(q,s)=(q_{2},s_{2}),~\text{if}~\frac{1}{q}+\frac{s}{n}=\frac{1}{q_{2}}+\frac{
s_{2}}{n}; \\
(q_{1}^{\prime },-s_{1})=(q_{2},s_{2}),~\text{if}~\frac{1}{q_{1}^{\prime }}+
\frac{-s_{1}}{n}=\frac{1}{q_{2}}+\frac{s_{2}}{n}.
\end{cases}
\end{equation}
We emphasize that, in our proofs in this paper,
many endpoint cases will be reduced to an embedding relations satisfying the condition such as the if parts in
(\ref{for proof, 13}),
then by using Proposition \ref{Sharpness of embedding, discrete form},
the two function spaces in the corresponding embedding relations are actually the same.
We have now completed the proof of \ $A\subset X.$

\subsection{The proof of $A\cap X_1=A_1$.}$~$\\
In order to show  $A\cap X_1=A_1$, we need the following proposition for reduction purpose.
\begin{proposition}[For reduction, discrete form]\label{For reduction, discrete form}
Suppose $0< q, q_1, q_2\leq \infty$, $s> 0$.
If
\begin{equation*}
\frac{1}{q}\leq\frac{1}{q_1},~\frac{1}{q}\leq\frac{1}{q_2},~~
1+\frac{1}{q}<\frac{1}{q_1}+\frac{1}{q_2}+\frac{s}{n},
\end{equation*}
then
\begin{equation*}
l(q_1, s)\ast l(q_2, s) \subset l(q, s).
\end{equation*}
\end{proposition}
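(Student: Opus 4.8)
The plan is to dominate the weighted convolution pointwise by a sum of two \emph{unweighted} convolutions, each carrying the weight on only one of the two factors, and then to invoke the unweighted discrete Young inequality (Proposition~\ref{Sharpness of the Young's inequality, discrete form}) together with the sharp embedding of Proposition~\ref{Sharpness of embedding, discrete form}. The strict inequality $1+1/q<1/q_1+1/q_2+s/n$ provides exactly the slack needed to gain from the embedding, and balancing the auxiliary indices is the only step that really requires care.

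First I would record the elementary estimate $\langle k\rangle\le\sqrt2\,(\langle k-j\rangle+\langle j\rangle)$, valid for all $k,j\in\mathbb{Z}^n$, which since $s>0$ yields $\langle k\rangle^s\lesssim\langle k-j\rangle^s+\langle j\rangle^s$. Taking nonnegative sequences $\{a_k\},\{b_k\}$ and writing $\widetilde a_k=\langle k\rangle^s a_k$, $\widetilde b_k=\langle k\rangle^s b_k$, I would multiply this inequality by $a_{k-j}b_j\ge0$ and sum over $j$ to obtain
\[
\langle k\rangle^s\,(\{a_k\}\ast\{b_k\})(k)\ \lesssim\ (\{\widetilde a_k\}\ast\{b_k\})(k)+(\{a_k\}\ast\{\widetilde b_k\})(k),
\]
and hence
\[
\|\{a_k\}\ast\{b_k\}\|_{l(q,s)}\ \lesssim\ \|\{\widetilde a_k\}\ast\{b_k\}\|_{l^q}+\|\{a_k\}\ast\{\widetilde b_k\}\|_{l^q}.
\]
Since $\|\{\widetilde a_k\}\|_{l^{q_1}}=\|\{a_k\}\|_{l(q_1,s)}$ and $\|\{\widetilde b_k\}\|_{l^{q_2}}=\|\{b_k\}\|_{l(q_2,s)}$, it remains to estimate the two unweighted convolutions by the right-hand side of the desired inequality.

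For the first term I would choose an auxiliary index $r_2\in(0,\infty]$ with
\[
\max\{1/q,\ 1+1/q-1/q_1\}\ \le\ 1/r_2\ <\ 1/q_2+s/n;
\]
such $r_2$ exists because $1/q\le 1/q_2<1/q_2+s/n$ (using $1/q\le1/q_2$ and $s>0$) and $1+1/q-1/q_1<1/q_2+s/n$ (the third hypothesis), so the lower bound lies strictly below the upper bound. Then $l(q_2,s)\subset l^{r_2}$ by Proposition~\ref{Sharpness of embedding, discrete form} (using $s>0$), and $l^{q_1}\ast l^{r_2}\subset l^q$ by Proposition~\ref{Sharpness of the Young's inequality, discrete form}, since $1/q\le1/q_1$, $1/q\le1/r_2$ and $1+1/q\le1/q_1+1/r_2$ all hold by the construction of $r_2$ and the hypotheses. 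Consequently
\[
\|\{\widetilde a_k\}\ast\{b_k\}\|_{l^q}\ \lesssim\ \|\{\widetilde a_k\}\|_{l^{q_1}}\,\|\{b_k\}\|_{l^{r_2}}\ \lesssim\ \|\{a_k\}\|_{l(q_1,s)}\,\|\{b_k\}\|_{l(q_2,s)}.
\]
The second term is treated symmetrically, with an index $r_1$ satisfying $\max\{1/q,\,1+1/q-1/q_2\}\le 1/r_1<1/q_1+s/n$. Adding the two bounds proves $l(q_1,s)\ast l(q_2,s)\subset l(q,s)$. I expect the only genuine obstacle to be the index bookkeeping in this last paragraph: checking that both lower bounds on $1/r_j$ fall strictly below $1/q_j+s/n$ is exactly where the hypotheses $1/q\le1/q_1$, $1/q\le1/q_2$ and the strict inequality $1+1/q<1/q_1+1/q_2+s/n$ get used.
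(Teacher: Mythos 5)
Your proof is correct: the pointwise splitting $\langle k\rangle^s\lesssim\langle k-j\rangle^s+\langle j\rangle^s$ (valid since $s>0$), the quasi-triangle inequality in $l^q$, and the index bookkeeping for $r_1,r_2$ all check out — the choices $\max\{1/q,\,1+1/q-1/q_1\}\le 1/r_2<1/q_2+s/n$ (and symmetrically for $r_1$) are indeed feasible exactly because of the hypotheses, and they make both Proposition \ref{Sharpness of the Young's inequality, discrete form} and the strict-inequality case of Proposition \ref{Sharpness of embedding, discrete form} applicable. However, your route differs from the paper's. The paper keeps the weight on \emph{both} factors, writes $a_{k-j}b_j\langle k\rangle^s=a_{k-j}\langle k-j\rangle^s\,b_j\langle j\rangle^s\cdot\frac{\langle k\rangle^s}{\langle k-j\rangle^s\langle j\rangle^s}$, and applies H\"older in $j$ against the ratio $\frac{\langle k\rangle^s}{\langle k-j\rangle^s\langle j\rangle^s}$, whose uniform $l_j^t$ bound is supplied by Proposition \ref{Integral capability of weight, discrete form} with $1/t=1-(1/q_1+1/q_2-1/q)\wedge 1$ (here the strict hypothesis enters as $t>n/s$); it then raises to the power $r=t'$ and applies the unweighted discrete Young inequality at the scaled exponents $q/r,q_1/r,q_2/r$, treating $r=\infty$ (i.e.\ $q=q_1=q_2=\infty$) separately. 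Your argument buys elementarity: no weight-summability lemma, no power trick, no endpoint case distinction, just subadditivity of the weight plus embedding plus unweighted Young. What it gives up is flexibility: the splitting $\langle k\rangle^s\lesssim\langle k-j\rangle^s+\langle j\rangle^s$ exploits that all three weights carry the same exponent $s>0$, whereas the paper's H\"older-against-the-weight-ratio scheme works for unequal exponents $s\le s_1,s_2$ and is exactly the template the authors reuse later (the $q=\infty$ endpoint in Section 3.6, the continuous analogues via Propositions \ref{Integral capability of weight, continuous form} and \ref{Integral capability of weight, power weight}, and the fractional-integral endpoints).
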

\begin{proof}
By the assumption, we have
\begin{equation*}
\frac{1}{q_{1}}+\frac{1}{q_{2}}-\frac{1}{q}\geq 0.
\end{equation*}%
We denote
\begin{equation*}
\frac{1}{r}=\left( \frac{1}{q_{1}}+\frac{1}{q_{2}}-\frac{1}{q}\right) \wedge
1
\end{equation*}
and let $t=r^{\prime }$. Then $r\in \lbrack 1,\infty ]$ and
\begin{equation*}
\frac{1}{r}+\frac{1}{q}\leq \frac{1}{q_{1}}+\frac{1}{q_{2}}.
\end{equation*}
Using Proposition \ref{Integral capability of weight, discrete form}, we deduce that
\begin{equation*}
\left\Vert \frac{\langle k\rangle ^{s}}{\langle k-j\rangle ^{s}\langle
j\rangle ^{s}}\right\Vert _{l_{j}^{t}}\lesssim 1
\end{equation*}
uniformly for $k\in \mathbb{Z}^{n}$.

For $r\neq \infty$,
\begin{equation*}
\begin{split}
&\bigg(\sum_{k\in \mathbb{Z}^n}\Big(\sum_{j\in \mathbb{Z}^n}a_{k-j}b_j\Big)
^q \langle k\rangle^{sq}\bigg)^{{1}/{q}} \\
=& \bigg(\sum_{k\in \mathbb{Z}^n}\Big(\sum_{j\in \mathbb{Z}^n}a_{k-j}\langle
k-j\rangle^{s}b_j\langle j\rangle^{s} \frac{\langle k\rangle^{s}}{\langle
k-j\rangle^{s}\langle j\rangle^{s}}\Big)^q \bigg)^{{1}/{q}} \\
\lesssim & \sup_{k\in \mathbb{Z}^n}\left\|\frac{\langle k\rangle^{s}}{
\langle k-j\rangle^{s}\langle j\rangle^{s}}\right\|_{l_j^t} \bigg(\sum_{k\in
\mathbb{Z}^n}\Big(\sum_{j\in \mathbb{Z}^n}a^r_{k-j}\langle
k-j\rangle^{sr}b^r_j\langle j\rangle^{sr} \Big)^{{q}/{r}} \bigg)^{{1}/{q}} \\
\lesssim & \bigg(\sum_{k\in \mathbb{Z}^n}\Big(\sum_{j\in \mathbb{Z}
^n}a^r_{k-j}\langle k-j\rangle^{sr}b^r_j\langle j\rangle^{sr} \Big)^{{q}/{r}} \bigg)^{{1}/{q}} =\left\|\{a_k^r\langle k\rangle^{sr}\}\ast
\{b_k^r\langle k\rangle^{sr}\}\right\|_{l^{{q}/{r}}}^{{1}/{r}}.
\end{split}%
\end{equation*}
By the fact that $1/{q}\leq 1/{q_1}, 1/{q}\leq 1/{q_2}$ and ${1}/{r}+{1}/{q}\leq {1}/{q_1}+{1}/{q_2}$, we have,
by Proposition \ref{Sharpness of the Young's inequality, discrete form},
\begin{equation*}
\begin{split}
\left\|\{a_k^r\langle k\rangle^{sr}\}\ast \{b_k^r\langle
k\rangle^{sr}\}\right\|_{l^{{q}/{r}}}^{{1}/{r}} &\lesssim
\left\|\{a_k^r\langle k\rangle^{sr}\}\right\|_{l^{{q_1}/{r}}}^{{1}/{r}}\left\|\{b_k^r\langle k\rangle^{sr}\}\right\|_{l^{{q_2}/{r}}}^{{1}/{r}} \\
&= \left\|\{a_k\langle k\rangle^{s}\}\right\|_{l^{q_1}}\left\|\{b_k\langle
k\rangle^{s}\}\right\|_{l^{q_2}} \\
&= \|\{a_k\}\|_{l(q_1,s)}\|\{b_k\}\|_{l(q_2,s)}.
\end{split}%
\end{equation*}

For $r=\infty $, we have $q=q_{1}=q_{2}=\infty $ and
\begin{equation*}
\left\Vert \frac{\langle k\rangle ^{s}}{\langle k-j\rangle ^{s}\langle
j\rangle ^{s}}\right\Vert _{l_{j}^{1}}\lesssim 1
\end{equation*}
uniformly for $k\in \mathbb{Z}^{n}$. So for any $k\in \mathbb{Z}^{n}$, we
have
\begin{equation*}
\begin{split}
\sum_{j\in \mathbb{Z}^{n}}a_{k-j}b_{j}\langle k\rangle ^{s}=& \sum_{j\in
\mathbb{Z}^{n}}a_{k-j}\langle k-j\rangle ^{s}b_{j}\langle j\rangle ^{s}\frac{
\langle k\rangle ^{s}}{\langle k-j\rangle ^{s}\langle j\rangle ^{s}} \\
\lesssim & \Vert \{a_{k}\}\Vert _{l(\infty ,s)}\Vert \{b_{k}\}\Vert
_{l(\infty ,s)}\sum_{j\in \mathbb{Z}^{n}}\frac{\langle k\rangle ^{s}}{
\langle k-j\rangle ^{s}\langle j\rangle ^{s}} \\
\lesssim & \Vert \{a_{k}\}\Vert _{l(\infty ,s)}\Vert \{b_{k}\}\Vert
_{l(\infty ,s)}.
\end{split}
\end{equation*}
Hence
\begin{equation*}
\Vert \{a_{k}\}\ast \{b_{k}\}\Vert _{l(\infty ,s)}\lesssim \Vert
\{a_{k}\}\Vert _{l(\infty ,s)}\Vert \{b_{k}\}\Vert _{l(\infty ,s)},
\end{equation*}
which completes the proof of Proposition 3.1.
\end{proof}

Now we can return to prove $A\cap X_{1}= A_{1}.$
First, the inclusion  $A\cap X_{1}\subset A_{1}$ is obvious, since
this fact\textbf{\ }can be verified directly by
\begin{equation*}
A\cap X_{1}=A\cap A_{1}\subset A_{1}.
\end{equation*}%
It remains to show \ $A_{1}\subset A\cap X_{1}$. To this end, we
only need to show
\begin{equation*}
(\mathbf{q},\mathbf{s})\in A_{1}\Longrightarrow l(q_{1},s_{1})\ast
l(q_{2},s_{2})\subset l(q,s).
\end{equation*}

We will consider three different cases:\\
Case 1: $s\geq 0, s_1\geq 0, s_2\geq 0;$ \\
Case 2: $s<0, s_1> 0, s_2>0;$\\
Case 3: $s<0, s_1\leq 0, s_2\geq 0$ or $s<0, s_1\geq 0, s_2\leq 0$.

We point out that Case 2 and Case 3 can be reduced to Case 1.

In fact, in Case 2 one can choose
\begin{equation*}
\frac{1}{\bar{q}}=\max\left\{\frac{1}{q}+\frac{s}{n}, 0\right\},~\bar{s}>0,
\end{equation*}
so that, by Proposition \ref{Sharpness of embedding, discrete form},
\begin{equation*}
l(\bar{q},\bar{s})\subset l(q,s)
\end{equation*}
and the new index group $(\bar{\textbf{q}},\bar{\textbf{s}})=(\bar{q},q_1,q_2,\bar{s},s_1,s_2)$ belongs to Case 1.
Hence, the conclusion with Case 2 can be deduced by that with Case 1 and the embedding of $l(\bar{q},\bar{s})$ and $l(q,s)$.

In Case 3 we may assume $s<0, s_1\leq 0, s_2\geq 0$ for the symmetry of $\ s_1, s_2$.
We now easily verify that
\begin{equation*}
\Big(\frac{1}{q_1}+\frac{s_1}{n}\Big)\vee 0+\Big(\frac{1}{q_1'}+\frac{-s_1}{n}\Big)\vee 0\leq \Big(\frac{1}{q}+\frac{s}{n}\Big)\vee 0+\Big(\frac{1}{q'}+\frac{-s}{n}\Big)\vee 0.
\end{equation*}
Combining it with
\begin{equation*}
1+\Big(\frac{1}{q}+\frac{s}{n}\Big)\vee 0<\Big(\frac{1}{q_1}+\frac{s_1}{n}\Big)\vee 0+\Big(\frac{1}{q_2}+\frac{s_2}{n}\Big)\vee 0,
\end{equation*}
we have
\begin{equation*}
1+\Big(\frac{1}{q_1'}+\frac{-s_1}{n}\Big)\vee 0<\Big(\frac{1}{q'}+\frac{-s}{n}\Big)\vee 0+\Big(\frac{1}{q_2}+\frac{s_2}{n}\Big)\vee 0.
\end{equation*}
The new index
$(\bar{\bar{\textbf{q}}},\bar{\bar{\textbf{s}}})=(q'_1,q',q_2,-s_1,-s,s_2)$ then belongs to Case 1.
If one can deduce
\begin{equation*}
l(q', -s)\ast l(q_2, s_2) \subset l(q_1', -s_1)
\end{equation*}
from Case 1,
the conclusion associated with Case 3 follows easily by a dual argument.

Now, we only need to handle Case 1.
By the spirit of Proposition \ref{Sharpness of embedding, discrete form}, one can choose $\ \widetilde{q_1},~\widetilde{q_2}\in (0,\infty],$ such that
\begin{equation*}
\frac{1}{\widetilde{q_1}}+\frac{s}{n}\leqslant \frac{1}{q_1}+\frac{s_1}{n}
\end{equation*}
with strict inequality if $s<s_1$,
\begin{equation*}
\frac{1}{\widetilde{q_2}}+\frac{s}{n}\leqslant \frac{1}{q_2}+\frac{s_2}{n}
\end{equation*}
with strict inequality if $s<s_2$,
and
\begin{equation*}
\begin{cases}
s\geq 0,\\
\frac{1}{q}+\frac{s}{n}\leq \frac{1}{\widetilde{q_1}}+\frac{s}{n},~\frac{1}{q}+\frac{s}{n}\leq \frac{1}{\widetilde{q_2}}+\frac{s}{n},\\
1+\frac{1}{q}+\frac{s}{n}<\frac{1}{\widetilde{q_1}}+\frac{s}{n}+\frac{1}{\widetilde{q_2}}+\frac{s}{n}.
\end{cases}
\end{equation*}
Then, by Proposition \ref{Sharpness of embedding, discrete form}, we obtain
\begin{equation*}
l(q_1,s_1)\subset l(\widetilde{q_1},s),~l(q_2,s_2)\subset l(\widetilde{q_2},s).
\end{equation*}
Moreover, we can use Proposition \ref{Sharpness of the Young's inequality, discrete form} or Proposition \ref{For reduction, discrete form} to deduce
\begin{equation*}
l(\widetilde{q_1}, s)\ast l(\widetilde{q_2}, s) \subset l(q, s).
\end{equation*}
The desired conclusion now follows by an embedding argument.

\subsection{The proof of $A\cap X_2=A_2$.}$~$\\
We want to show the inclusion $A\cap X_2\subset A_2$.

Firstly, we show
\begin{equation*}
(\textbf{q},\textbf{s})\in A\cap X_2\cap \left\{(\textbf{q},\textbf{s})\in X_2:  \frac{1}{q}+\frac{s}{n}= \frac{1}{q_1}+\frac{s_1}{n} \right\}
\Longrightarrow
\begin{cases}
s=s_1=s_2=0,\\
q=q_1,~q_2=1.
\end{cases}
\end{equation*}
In this case, we have $(q,s)=(q_1,s_1)$
and
\begin{equation*}
\frac{1}{q_2}+\frac{s_2}{n}=1.
\end{equation*}
Using (\ref{Theorem 1, conclusion}) and H\"{o}lder's inequality, we obtain
\begin{equation}\label{for proof, 1}
  \begin{split}
    \sum_{k\in \mathbb{Z}^n}\sum_{j\in \mathbb{Z}^n}a_jb_{k-j}c_k
    \lesssim &
    \|\{a_k\}\ast\{b_k\}\|_{l(q,s)}\|\{c_k\}\|_{l(q',-s)}
    \\
    \lesssim &
    \|\{a_k\}\|_{l(q_1,s_1)}\|\{c_k\}\|_{l(q',-s)}\|\{b_k\}\|_{l(q_2,s_2)}.
  \end{split}
\end{equation}
For $\frac{1}{q_1}+\frac{s_1}{n}> 0,$
we let
\begin{equation*}
a_{k,N}=
\begin{cases}
\frac{1}{(2N)^{s_1+{n}/{q_1}}}, &|k|\leq 2N
\\
0, &\text{otherwise, }
\end{cases}
\hspace{10mm}
c_{k,N}=
\begin{cases}
\frac{\langle k\rangle^t}{N^{t-s+{n}/{q'}}},&|k|\leq 2N
\\
0,&\text{otherwise }
\end{cases}
\end{equation*}
for some $t$ satisfying
\begin{equation*}
t>-n,~\frac{1}{q'}+\frac{t-s}{n}> 0.
\end{equation*}
It is easy to see that
\begin{equation*}
\|\{a_{k,N}\}\|_{l(q_1,s_1)}\sim \|c_{k,N}\|_{l(q',-s)}\sim 1.
\end{equation*}
We then use (\ref{for proof, 1}) to deduce
\begin{equation*}
\begin{split}
\|\{b_k\}\|_{l(q_2,s_2)}
\gtrsim &
\sum_{k\in \mathbb{Z}^n}\sum_{j\in \mathbb{Z}^n}a_{j,N}b_{k-j}c_{k,N}
\\
\gtrsim &
\frac{1}{(2N)^{s_1+\frac{n}{q_1}}} \sum_{|k|\leq N}c_{k,N}\sum_{|j|\leq 2N}b_{k-j}
\\
\gtrsim &
\frac{1}{(2N)^{s_1+\frac{n}{q_1}}} \sum_{|k|\leq N}c_{k,N}\sum_{|j|\leq N}b_{j}
\\
\sim &
\frac{1}{(2N)^{s_1+\frac{n}{q_1}}}\cdot\frac{N^{t+n}}{N^{t-s+\frac{n}{q'}}}\sum_{|j|\leq N}b_{j}
\sim
\sum_{|j|\leq N}b_{j}.
\end{split}
\end{equation*}
Letting $N\rightarrow \infty$, we obtain the embedding relationship
\begin{equation*}
l(q_2,s_2)\subset l(1,0),
\end{equation*}
and use Proposition \ref{Sharpness of embedding, discrete form} to deduce
\begin{equation*}
s_2=0,~q_2=1.
\end{equation*}
Recalling $0\leq s_1+s_2,~ s=s_1,~ s\leq s_2$,
we deduce
\begin{equation*}
s=s_1=s_2=0.
\end{equation*}
For $\frac{1}{q_1}+\frac{s_1}{n}\leq 0,$ we have $\frac{1}{q'}+\frac{-s}{n}\geq 1>0.$
We can rewrite
\begin{equation*}
\sum_{j\in \mathbb{Z}^n}a_j\sum_{k\in \mathbb{Z}^n}c_kb_{k-j}=\sum_{k\in \mathbb{Z}^n}c_k\sum_{j\in \mathbb{Z}^n}a_jb_{k-j}
\end{equation*}
and use the same argument as above to deduce the conclusion.

Next, we can show that
\begin{equation*}
(\textbf{q},\textbf{s})\in A\cap X_2\cap \left\{(\textbf{q},\textbf{s})\in X_2: ~ \frac{1}{q}+\frac{s}{n}= \frac{1}{q_2}+\frac{s_2}{n} \right\}
\Longrightarrow
\begin{cases}
s=s_1=s_2=0,\\
q=q_2,~q_1=1,
\end{cases}
\end{equation*}
by using the same argument above.

Finally, we show
\begin{equation*}
(\textbf{q},\textbf{s})\in A\cap X_2\cap \left\{(\textbf{q},\textbf{s})\in X_2: ~ \frac{1}{q}+\frac{s}{n}=0 \right\}
\Longrightarrow
\begin{cases}
s=s_1=s_2=0,\\
~q=\infty,~\frac{1}{q_1}+\frac{1}{q_2}=1.
\end{cases}
\end{equation*}
In this case, we have
\begin{equation*}
1=\frac{1}{q_1}+\frac{s_1}{n}+\frac{1}{q_2}+\frac{s_2}{n}\Longleftrightarrow \frac{1}{q_1'}+\frac{-s_1}{n}=\frac{1}{q_2}+\frac{s_2}{n}.
\end{equation*}
Also, we have
\begin{equation*}
\frac{1}{q'}+\frac{-s}{n}=1.
\end{equation*}
So, a dual argument gives that
\begin{equation*}
l(q', -s)\ast l(q_2, s_2) \subset l(q_1', -s_1).
\end{equation*}
With the same argument as we used above, we obtain the facts
\begin{equation*}
-s_1=-s=-s_2=0,~ q'=1,~ q_2=q_1'.
\end{equation*}
We have now completed the proof for $A_{2}\supset A\cap X_{2}$.

To prove the opposite inclusion $A_{2}\subset A\cap X_{2}$, we need
to show
\begin{equation*}
(\textbf{q},\textbf{s})\in A_2\Longrightarrow l(q_1, s_1)\ast l(q_2, s_2) \subset l(q, s).
\end{equation*}
This fact can be directly verified by Proposition \ref{Sharpness of the Young's inequality, discrete form}.

\subsection{The proof of $A\cap X_3=A_3$.}$~$\\

We want to prove the inclusion\textbf{\ }$A\cap X_{3}\subset A_{3}$. We only need to show
\begin{equation*}
(\textbf{q},\textbf{s})\in A\cap X_3
\Longrightarrow
\begin{cases}
\frac{1}{q_1}+\frac{1}{q_2}=1,~s_1+s_2=0,\\
0\leq \frac{1}{q_1}+\frac{s_1}{n},\frac{1}{q_2}+\frac{s_2}{n}.
\end{cases}
\end{equation*}
In this case, we have
\begin{equation*}
1=\Big(\frac{1}{q_1}+\frac{s_1}{n}\Big)\vee 0+\Big(\frac{1}{q_2}+\frac{s_2}{n}\Big)\vee 0.
\end{equation*}
Hence, the inequalities
\begin{equation*}
1\leq \frac{1}{q_1}+\frac{s_1}{n}+\frac{1}{q_2}+\frac{s_2}{n}\leq \Big(\frac{1}{q_1}+\frac{s_1}{n}\Big)\vee 0+\Big(\frac{1}{q_2}+\frac{s_2}{n}\Big)\vee 0=1
\end{equation*}
implies
\begin{equation*}
1=\frac{1}{q_1}+\frac{s_1}{n}+\frac{1}{q_2}+\frac{s_2}{n}=
\Big(\frac{1}{q_1}+\frac{s_1}{n}\Big)\vee 0+\Big(\frac{1}{q_2}+\frac{s_2}{n}\Big)\vee 0.
\end{equation*}
So the desired conclusion holds.

To show the opposite inclusion\textbf{\ }$A_{3}\subset A\cap X_{3}$,
it suffices to show
\begin{equation}\label{for proof, 8}
(\textbf{q},\textbf{s})\in A_3\Longrightarrow l(q_1, s_1)\ast l(q_2, s_2) \subset l(q, s).
\end{equation}
In fact, we have
\begin{equation*}
1+\Big(\frac{1}{q_1'}+\frac{-s_1}{n}\Big)\vee 0<\Big(\frac{1}{q'}+\frac{-s}{n}\Big)\vee 0+\Big(\frac{1}{q_2}+\frac{s_2}{n}\Big)\vee 0
\end{equation*}
in this case.
Hence the proof of (\ref{for proof, 8}) can be completed by a dual argument and the same
method we used in the proof of $A\cap X_{1}=A_{1}$.
\subsection{The proof of $A\cap X_4=A_4$.}$~$\\
We want to prove the inclusion\textbf{\ }$A\cap X_{4}\subset A_{4}$.

Firstly, we show
\begin{equation*}
(\textbf{q},\textbf{s})\in A\cap X_4
\Longrightarrow
\frac{1}{q}\leq \frac{1}{q_1}+\frac{1}{q_2}.
\end{equation*}
The proof for case $q=\infty$ is obvious, so we only treat the case $q<\infty$.

Fix $N\in \mathbb{N}$.
Let
\begin{equation*}
a_{k,N}=
\begin{cases}
\langle k \rangle^{-s_1-\frac{n}{q_1}},&|k|\leq N,\\
0,&\text{otherwise},
\end{cases}
\hspace{10mm}
b_{k,N}=
\begin{cases}
\langle k \rangle^{-s_2-\frac{n}{q_2}},&|k|\leq N,\\
0,&\text{otherwise}.
\end{cases}
\end{equation*}
It is easy to see that
\begin{equation*}
\|\{a_{k,N}\}\|_{l(q_1,s_1)}\sim(\ln N)^{\frac{1}{q_1}},~\|\{b_{k,N}\}\|_{l(q_2,s_2)}\sim(\ln N)^{\frac{1}{q_2}}.
\end{equation*}
On the other hand,
\begin{equation*}
\begin{split}
\|\{a_{k,N}\}\ast \{b_{k,N}\}\|^q_{l(q, s)}
= &
\sum_{k\in \mathbb{Z}^n}\Big(\sum_{j\in \mathbb{Z}^n}a_{k-j,N}b_{j,N}\Big)^{q}\langle k\rangle^{sq}
\\
= &
\sum_{|k|\leq N}\Big(\sum_{|j|\leq N}a_{k-j,N}b_{j,N}\Big)^{q}\langle k\rangle^{sq}
\\
\gtrsim  &
\sum_{|k|\leq N}\Big(\sum_{|j-\frac{1}{2}k|\leq |\frac{1}{4}k|}\langle k-j\rangle^{-s_1-\frac{n}{q_1}} \langle j\rangle^{-s_2-\frac{n}{q_2}}\Big)^{q}\langle k\rangle^{sq}
\\
\gtrsim  &
\sum_{|k|\leq N}\Big(\sum_{|j-\frac{1}{2}k|\leq |\frac{1}{4}k|}\langle k\rangle^{-s_1-\frac{n}{q_1}-s_2-\frac{n}{q_2}}\Big)^{q}\langle k\rangle^{sq}
\\
\gtrsim  &
\sum_{|k|\leq N}\Big(\langle k\rangle^n\cdot\langle k\rangle^{-s_1-\frac{n}{q_1}-s_2-\frac{n}{q_2}}\Big)^{q}\langle k\rangle^{sq}.
\end{split}
\end{equation*}
Recalling
\begin{equation*}
1+\frac{1}{q}+\frac{s}{n}=\frac{1}{q_1}+\frac{s_1}{n}+\frac{1}{q_2}+\frac{s_2}{n},
\end{equation*}
we deduce that
\begin{equation*}
\|\{a_{k,N}\}\ast \{b_{k,N}\}\|^q_{l(q, s)}
\gtrsim
\sum_{|k|\leq N}\left(\langle k\rangle^{-s-{n}/{q}}\right)^{q}\langle k\rangle^{sq}
=
\sum_{|k|\leq N}\langle k\rangle^{-n}\sim \ln N,
\end{equation*}
which gives
\begin{equation*}
\|\{a_{k,N}\}\ast \{b_{k,N}\}\|_{l(q, s)}
\gtrsim (\ln N)^{{1}/{q}}.
\end{equation*}
Hence, the obtained inequality
\begin{equation*}
(\ln N)^{{1}/{q}}\lesssim (\ln N)^{{1}/{q_1}}\cdot (\ln N)^{{1}/{q_2}}
\end{equation*}
implies that
\begin{equation*}
\frac{1}{q}\leq \frac{1}{q_1}+\frac{1}{q_2}.
\end{equation*}

Now, we show
\begin{equation*}
(\textbf{q},\textbf{s})\in A\cap X_4\cap \left\{(\textbf{q},\textbf{s})\in X_4: ~s=s_1~or~s=s_2 \right\}
\Longrightarrow q\neq \infty,~q_1,q_2\neq 1.
\end{equation*}
We will prove this fact by a contradiction argument. If $(\textbf{q},\textbf{s})$ satisfies
\begin{equation}\label{for proof, 2}
\begin{cases}
0<s\leq s_1,~s=s_2,\\
1=\frac{1}{q_1}+\frac{1}{q_2}+\frac{s_1}{n},q=\infty,\\
\frac{s}{n}<\frac{1}{q_1}+\frac{s_1}{n},~q_2<\infty.\\
\end{cases}
\end{equation}
We define
\begin{equation*}
a_k=\langle k\rangle^{-{n}/{q_1}-s_1}\left(1+\ln\langle k\rangle\right)^{\epsilon_1}
\end{equation*}
for all $k\in \mathbb{Z}^n$, where
\begin{equation*}
\epsilon_1=
\begin{cases}
\text{a real number such that}~\ q_1\epsilon_1<-1,&\text{if}~q_1<\infty,
\\
0,&\text{if}~q_1=\infty.
\end{cases}
\end{equation*}
For any $N\in \mathbb{Z}^n$, we define
\begin{equation*}
b_{k,N}=
\frac{\langle N-k\rangle^{-\frac{n}{q_2}}}{\langle k\rangle^{s_2}}\left(1+\ln\langle N-k\rangle\right)^{\epsilon_2}
\end{equation*}
for all $k\in \mathbb{Z}^n$, where
$\ \epsilon_2$
is a real number to be chosen later such that $q_2\epsilon_2<-1$.
By a direct calculation, we have
$\{a_k\}_{k\in \mathbb{Z}^n}\in l(q_1,s_1)$, and
\begin{equation*}
\|\{b_{k,N}\}\|_{l(q_2,s_2)}\lesssim 1
\end{equation*}
uniformly for all $N\in \mathbb{Z}^n$.
But we find
\begin{equation*}
\begin{split}
&\sum_{|j-N|\leq {|N|}/{2}}a_{N-j}b_{j,N}\langle N\rangle^s
\\
=&
\sum_{|j-N|\leq {|N|}/{2}}\langle N-j\rangle^{-{n}/{q_1}-s_1}\left(1+\ln\langle N-j\rangle\right)^{\epsilon_1+\epsilon_2}
\cdot
\frac{\langle N-j\rangle^{-{n}/{q_2}}}{\langle j\rangle^{s_2}}
\langle N\rangle^s
\\
\sim &
\sum_{|j-N|\leq {|N|}/{2}}\langle N-j\rangle^{-{n}/{q_1}-s_1}\left(1+\ln\langle N-j\rangle\right)^{\epsilon_1+\epsilon_2}
\cdot
\langle N-j\rangle^{-{n}/{q_2}}
\\
=&
\sum_{|j|\leq {|N|}/{2}}\langle j\rangle^{-{n}/{q_1}-s_1}\left(1+\ln\langle j\rangle\right)^{\epsilon_1+\epsilon_2}
\cdot
\langle j\rangle^{-{n}/{q_2}}
\\
=&
\sum_{|j|\leq {|N|}/{2}}\langle j\rangle^{-n}\left(1+\ln\langle j\rangle\right)^{\epsilon_1+\epsilon_2}.
\end{split}
\end{equation*}
Since (\ref{for proof, 2}) implies
\begin{equation*}
\frac{1}{q_1}+\frac{1}{q_2}<1,
\end{equation*}
we can choose $\ \epsilon_1$ and $\ \epsilon_2$ such that
\begin{equation*}
\epsilon_1+\epsilon_2>-1.
\end{equation*}
On the other hand, it is easy to check that
\begin{equation*}
\|\{a_k\}\|_{l(q_1,s_1)}\cdot \|\{b_{k,N}\}\|_{l(q_2,s_2)}\lesssim 1
\end{equation*}
uniformly on $N$. This leads to a contradiction
\begin{equation*}
\begin{split}
1\gtrsim \|\{a_k\}\ast\{b_{k,N}\}\|_{l(\infty,s)}
\geq &
(\{a_k\}\ast\{b_{k,N}\})(N)\langle N\rangle^s
\\
=&
\sum_{j\in \mathbb{Z}^n}a_{N-j}b_{j,N}\langle N\rangle^s
\\
\gtrsim &
\sum_{|j-N|\leq \frac{|N|}{2}}a_{N-j}b_{j,N}\langle N\rangle^s
\\
\gtrsim &
\sum_{|j|\leq \frac{|N|}{2}}\langle j\rangle^{-n}\left(1+\ln\langle j\rangle\right)^{\epsilon_1+\epsilon_2}\rightarrow \infty
\end{split}
\end{equation*}
as $|N|\rightarrow \infty.$

If $\ s=s_1$, $q=\infty$, one can also deduce a contradiction by the same argument as above.
Also, the case $q_1=1$ or $q_2=1$ can be handled by a dual argument.
\\

To complete the proof of \ $A_{4}=A\cap X_{4}$, it now remains to
show $A_{4}\subset A\cap X_{4}$.

Firstly, we show
\begin{equation*}
(\textbf{q},\textbf{s})\in A_4\cap \{(\textbf{q},\textbf{s})\in A_4:q\neq \infty, q_1,q_2\neq 1\}\Longrightarrow l(q_1, s_1)\ast l(q_2, s_2) \subset l(q, s).
\end{equation*}
This conclusion will be obtained by using the fact that the discrete form of
the Young-type inequalities can be deduced from the continuous form of
Young-type inequalities. To this end, we establish the following
proposition which will also play a pivotal action in the proof of Theorem
\ref{Sharpness of weighted Young's inequality, power weight}.
\begin{proposition}[Implication method]\label{Implication method}
Suppose $1\leq q, q_1, q_2\leq \infty$, $s, s_1, s_2\in \mathbb{R}$. Then
\begin{equation*}
\mathbb{L}(q_1, s_1)\ast \mathbb{L}(q_2, s_2) \subset \mathbb{L}(q, s)
\end{equation*}
implies
\begin{equation*}
l(q_1, s_1)\ast l(q_2, s_2) \subset l(q, s).
\end{equation*}
\end{proposition}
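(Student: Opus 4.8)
The plan is to transfer the discrete inequality to the continuous one by representing sequences as superpositions of small, mutually disjoint bumps placed at a \emph{shifted} integer lattice, the shift being chosen so that no input bump, and no bump produced by the convolution, is centered at the origin --- the only place where the weight $|x|^{s}$ misbehaves, since there it is singular and not comparable to $\langle 0\rangle^{s}=1$.

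First I would fix a nonnegative $\varphi\in\mathscr{S}(\mathbb{R}^{n})$ with $\varphi\not\equiv 0$ and $\operatorname{supp}\varphi\subseteq B(0,\varepsilon_{0})$ for some small $\varepsilon_{0}<1/6$, set $\Psi=\varphi\ast\varphi$ (so $\Psi\geq 0$, $\Psi\not\equiv 0$, $\operatorname{supp}\Psi\subseteq B(0,2\varepsilon_{0})$), and choose the shifts $v=w=(1/3,0,\dots,0)$. Then $|k+v|_{\infty},|j+w|_{\infty},|m+v+w|_{\infty}\geq 1/3$ for all $k,j,m\in\mathbb{Z}^{n}$, while $|k+v|\sim\langle k\rangle$, $|j+w|\sim\langle j\rangle$, $|m+v+w|\sim\langle m\rangle$ uniformly. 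Since it suffices to treat nonnegative data, for nonnegative finitely supported sequences $a=\{a_{k}\}$ and $b=\{b_{j}\}$ I would put
\begin{equation*}
f=\sum_{k\in\mathbb{Z}^{n}}a_{k}\,\varphi(\cdot-k-v),\qquad g=\sum_{j\in\mathbb{Z}^{n}}b_{j}\,\varphi(\cdot-j-w).
\end{equation*}
The centers of distinct bumps are at distance $\geq 1>2\varepsilon_{0}$, so these translates have pairwise disjoint supports; and on $\operatorname{supp}\varphi(\cdot-k-v)$ one has $|x|\sim\langle k\rangle$ (this is exactly where $|k+v|\geq 1/3>\varepsilon_{0}$ enters), so a change of variables yields $\|f\|_{\mathbb{L}(q_{1},s_{1})}\sim\|\varphi\|_{L^{q_{1}}}\|a\|_{l(q_{1},s_{1})}\sim\|a\|_{l(q_{1},s_{1})}$ and likewise $\|g\|_{\mathbb{L}(q_{2},s_{2})}\sim\|b\|_{l(q_{2},s_{2})}$, with the obvious modification when $q_{i}=\infty$.

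Next I would use bilinearity of the convolution and $\varphi\ast\varphi=\Psi$ to get
\begin{equation*}
f\ast g=\sum_{k,j}a_{k}b_{j}\,\Psi(\cdot-k-j-v-w)=\sum_{m\in\mathbb{Z}^{n}}(a\ast b)_{m}\,\Psi(\cdot-m-v-w),
\end{equation*}
where the translates $\Psi(\cdot-m-v-w)$ again have pairwise disjoint supports (centers $\geq 1$ apart, radii $2\varepsilon_{0}<1/3$) and each avoids the origin because $|m+v+w|\geq 1/3>2\varepsilon_{0}$. Hence
\begin{equation*}
\|f\ast g\|_{\mathbb{L}(q,s)}^{q}=\sum_{m}(a\ast b)_{m}^{q}\,\big\|\Psi(\cdot-m-v-w)\big\|_{\mathbb{L}(q,s)}^{q}\sim\sum_{m}(a\ast b)_{m}^{q}\,\langle m\rangle^{sq}=\|a\ast b\|_{l(q,s)}^{q},
\end{equation*}
again with the natural change for $q=\infty$, since $\int|\Psi(y)|^{q}|y+m+v+w|^{sq}\,dy\sim\langle m\rangle^{sq}\|\Psi\|_{L^{q}}^{q}$ with $\|\Psi\|_{L^{q}}\in(0,\infty)$. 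Plugging $f,g$ into the hypothesis $\|f\ast g\|_{\mathbb{L}(q,s)}\lesssim\|f\|_{\mathbb{L}(q_{1},s_{1})}\|g\|_{\mathbb{L}(q_{2},s_{2})}$ gives $\|a\ast b\|_{l(q,s)}\lesssim\|a\|_{l(q_{1},s_{1})}\|b\|_{l(q_{2},s_{2})}$ for all nonnegative finitely supported $a,b$; I would finish by truncating and passing to the limit (monotone convergence) to cover arbitrary $a\in l(q_{1},s_{1})$, $b\in l(q_{2},s_{2})$, and by $|a\ast b|\leq|a|\ast|b|$ to remove the sign restriction.

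The only delicate point is the singularity of $|x|^{s}$ at $x=0$: a naive encoding by unit cubes centered at the lattice points forces one cube onto the origin, where $\int_{Q_{0}}|x|^{sq}\,dx$ need not even be finite and is not comparable to $\langle 0\rangle^{sq}=1$. Choosing $v,w\notin\mathbb{Z}^{n}$ with $v+w\notin\mathbb{Z}^{n}$, and taking the bumps small enough that all of the supports above remain inside $\{|x|\geq 1/3-2\varepsilon_{0}>0\}$, is precisely what makes the three weighted $\mathbb{L}$-norms simultaneously comparable to the corresponding weighted $l$-norms; everything else is a routine change of variables and limiting argument.
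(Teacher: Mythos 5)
Your proof is correct, and it follows the same overall strategy as the paper: encode the sequences as superpositions of disjointly supported bumps indexed by the lattice, check that the three weighted $\mathbb{L}$-norms are comparable to the corresponding weighted $l$-norms, and feed the companion functions into the continuous inequality. The implementations differ in a genuine way, though. The paper keeps the bumps centered at the integer points and uses the indicator of the punctured cube $Q\setminus\frac18 Q$; the hole removes the singularity of $|x|^{s}$ at the origin, but since the convolution of two such indicators is not itself a clean superposition of disjoint bumps, the paper must bound $\|f\ast g\|_{\mathbb{L}(q,s)}$ from below by restricting to $\tfrac12Q+k$ and using the geometric overlap estimate $\bigl|(\tfrac12Q\setminus\tfrac18Q)\cap(x+Q\setminus\tfrac18Q)\bigr|\gtrsim 1$. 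You instead shift the lattice by $v,w$ with $v+w\notin\mathbb{Z}^{n}$ and use small bumps, so that $f\ast g=\sum_{m}(a\ast b)_{m}\,\Psi(\cdot-m-v-w)$ is an exact identity with pairwise disjoint supports all staying away from the origin; this makes the convolution step an exact (two-sided) norm comparison rather than a one-sided overlap estimate, at the price of the small bookkeeping with $\varepsilon_{0}<1/6$ and the distances $|k+v|,|m+v+w|\geq 1/3$, which you carry out correctly, including the $q=\infty$ modifications and the limiting argument from finitely supported nonnegative data. Either route yields the proposition; yours is marginally cleaner at the convolution step, the paper's avoids introducing the lattice shift.
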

\begin{proof}
We denote the unit cube
\begin{equation*}
Q=\left\{x\in \mathbb{R}^n:~-1/2\leq x_j \leq 1/2, j=1,2,\cdots,n \right\}.
\end{equation*}

Given two positive sequences $\{a_k\}_{k\in \mathbb{Z}^n}$ and $\{b_k\}_{k\in \mathbb{Z}^n}$ defined on $\mathbb{Z}^n$,
we define the companion functions
\begin{equation*}
f(x)=\sum_{k\in \mathbb{Z}^n}a_k\chi_{Q\backslash \frac{1}{8}Q}(x-k)
\end{equation*}
and
\begin{equation*}
g(x)=\sum_{k\in \mathbb{Z}^n}b_k\chi_{Q\backslash \frac{1}{8}Q}(x-k).
\end{equation*}
It is easy to check that
\begin{equation*}
\|f\|_{\mathbb{L}(q_1, s_1)}
\sim
\left(\sum_{k\in \mathbb{Z}^n}a_k^{q_1}\langle k\rangle^{s_1q_1} \right)^{{1}/{q_1}}
=\|\{a_k\}\|_{l_k(q_1,s_1)}
\end{equation*}
and
\begin{equation*}
\|g\|_{\mathbb{L}(q_2, s_2)}
\sim
\left(\sum_{k\in \mathbb{Z}^n}b_k^{q_2}\langle k\rangle^{s_2q_2} \right)^{{1}/{q_2}}
=\|\{b_k\}\|_{l_k(q_2,s_2)}.
\end{equation*}
On the other hand,
\begin{eqnarray*}
&&\Vert f\ast g\Vert _{\mathbb{L}(q,s)}^{q}=\int_{\mathbb{R}^{n}}\Big|\int_{%
\mathbb{R}^{n}}f(x-y)g(y)dy\Big|^{q}|x|^{sq}dx \\
&=&\sum_{k\in \mathbb{Z}^{n}}\int_{Q+k}\Big|\sum_{j\in \mathbb{Z}%
^{n}}\int_{Q+j}f(x-y)g(y)dy\Big|^{q}|x|^{sq}dx \\
&=&\sum_{k\in \mathbb{Z}^{n}}\int_{Q}\Big|\sum_{j\in \mathbb{Z}%
^{n}}\int_{Q}f(k-j+x-y)g(y+j)dy\Big|^{q}|x+k|^{sq}dx \\
&\gtrsim &\sum_{k\in \mathbb{Z}^{n}}\int_{\frac{1}{2}Q}\Big|\sum_{j\in
\mathbb{Z}^{n}}\int_{\frac{1}{2}Q}f(k-j+x-y)g(y+j)dy\Big|^{q}|x+k|^{sq}dx \\
&=&\sum_{k\in \mathbb{Z}^{n}}\int_{\frac{1}{2}Q}\Big|\sum_{j\in \mathbb{Z}%
^{n}}b_{j}\int_{\frac{1}{2}Q\backslash \frac{1}{8}Q}f(k-j+x-y)dy\Big|%
^{q}|x+k|^{sq}dx \\
&\geq &\sum_{k\in \mathbb{Z}^{n}}\int_{\frac{1}{2}Q}\Big|\sum_{j\in \mathbb{Z%
}^{n}}a_{k-j}b_{j}\int_{(\frac{1}{2}Q\backslash \frac{1}{8}Q)\cap
(x+Q\backslash \frac{1}{8}Q)}dy\Big|^{q}|x+k|^{sq}dx \\
&=&\sum_{k\in \mathbb{Z}^{n}}\int_{\frac{1}{2}Q}\Big|\sum_{j\in \mathbb{Z}%
^{n}}a_{k-j}b_{j}|(\frac{1}{2}Q\backslash \frac{1}{8}Q)\cap (x+Q\backslash
\frac{1}{8}Q)|\Big|^{q}|x+k|^{sq}dx \\
&\gtrsim &\sum_{k\in \mathbb{Z}^{n}}\int_{\frac{1}{2}Q}\Big|\sum_{j\in
\mathbb{Z}^{n}}a_{k-j}b_{j}\Big|^{q}|x+k|^{sq}dx \\
&\gtrsim &\sum_{k\in \mathbb{Z}^{n}}\Big|\sum_{j\in \mathbb{Z}%
^{n}}a_{k-j}b_{j}\Big|^{q}\langle k\rangle ^{sq}=\Vert \{a_{k}\}\ast
\{b_{k}\}\Vert _{l(q,s)}^{q}.
\end{eqnarray*}
\end{proof}
Now the desired conclusion $l(q_1, s_1)\ast l(q_2, s_2) \subset l(q, s)$ follows directly from the above proposition and Theorem A.

Next, we show
\begin{equation*}
(\textbf{q},\textbf{s})\in A_4\cap \{(\textbf{q},\textbf{s})\in A_4:~q=\infty,~s<s_1,~s<s_2\}
\end{equation*}
implies
\begin{equation*}
l(q_1, s_1)\ast l(q_2, s_2) \subset l(\infty, s).
\end{equation*}
In this case, we have
\begin{equation*}
s>0,~\frac{1}{q_1}+\frac{1}{q_2}<1.
\end{equation*}
We denote
\begin{equation*}
\frac{1}{t}=1-\frac{1}{q_1}-\frac{1}{q_2}=\frac{s_1+s_2-s}{n}.
\end{equation*}
For any $k\in \mathbb{Z}^n$, we use Proposition \ref{Integral capability of weight, discrete form} to deduce
\begin{equation*}
\begin{split}
\sum_{j\in \mathbb{Z}^n}a_{k-j}b_j\langle k\rangle^s
= &
\sum_{j\in \mathbb{Z}^n}a_{k-j}\langle k-j\rangle^{s_1}b_j\langle j\rangle^{s_2}\frac{\langle k\rangle^s}{\langle k-j\rangle^{s_1}\langle j\rangle^{s_2}}
\\
\lesssim  &
\|\{a_k\langle k\rangle^{s_1}\}\|_{l^{q_1}}
\cdot
\|\{b_k\langle k\rangle^{s_2}\}\|_{l^{q_2}}
\cdot
\left\|\left\{\frac{\langle k\rangle^s}{\langle k-j\rangle^{s_1}\langle j\rangle^{s_2}}\right\} \right\|_{l_j^t}
\\
\lesssim  &
\|\{a_k\}\|_{l(q_1,s_1)}
\cdot
\|\{b_k\}\|_{l(q_2,s_2)}.
\end{split}
\end{equation*}
Finally, by a dual argument, we can deduce the conclusion in the case
$q_{1}=1$ or $q_{2}=1$, $s<s_{1},s<s_{2}.$
The proof of Theorem \ref{Sharpness of weighted Young's inequality, discrete form} is completed.

\section{Continuous weighted Young's inequality---Proof of Theorem \ref{Sharpness of weighted Young's inequality, power weight}  and Theorem \ref{Sharpness of weighted Young's inequality, continuous form}. }

\subsection{The proof of Theorem \ref{Sharpness of weighted Young's inequality, power weight}.}$~$\\
We start our proof by showing the necessity \textbf{\ }
\begin{equation*}
\mathbf{\mathbb{L}(q_{1},s_{1})\ast \mathbb{L}(q_{2},s_{2})\subset \mathbb{L}
(q,s)\Longrightarrow (\mathbf{q},\mathbf{s})\in }A_{2}\cup A_{4}\mathbf{.}
\end{equation*}
Using a dilation argument, we first deduce
\begin{equation}\label{for proof, 3}
1+\frac{1}{q}+\frac{s}{n}=\frac{1}{q_1}+\frac{s_1}{n}+\frac{1}{q_2}+\frac{s_2}{n}.
\end{equation}
Then, we choose
\begin{equation*}
f(x)=\chi_{B(-2,1)},~g(x)=\chi_{B(2,1)}.
\end{equation*}
Observing
\begin{equation*}
(f\ast g)(x)\gtrsim 1
\end{equation*}
for $x\in B(0,{1}/{2}),$
we obtain
\begin{equation*}
\begin{split}
\int_{B(0,{1}/{2})}|x|^{qs}dx
\lesssim &
\int_{B(0,{1}/{2})}(f\ast g)^q(x)|x|^{qs}dx
\\
\lesssim &
\|f\ast g\|^q_{\mathbb{L}(q, s)}\lesssim \|f\|^q_{\mathbb{L}(q_1, s_1)}\cdot\|g\|^q_{\mathbb{L}(q_2, s_2)}\lesssim 1
\end{split}
\end{equation*}
for $q<\infty.$
It clearly yields
\begin{equation}\label{for proof, 4}
\begin{cases}
\frac{1}{q}+\frac{s}{n}>0, & \text{if~}q<\infty,
\\
s\geq 0, & \text{if~}q=\infty.
\end{cases}
\end{equation}
On the other hand, by Proposition \ref{Implication method} and Theorem \ref{Sharpness of weighted Young's inequality, discrete form}, we know
\begin{equation}\label{for proof, 5}
(\mathbf{q},\mathbf{s})\in \bigcup_{i=1}^4A_i.
\end{equation}
Combining with (\ref{for proof, 3}), (\ref{for proof, 4}) and (\ref{for proof, 5}), we conclude
\begin{equation*}
(\mathbf{q},\mathbf{s})\in A_2\cup A_4.
\end{equation*}

To prove the sufficiency of Theorem \ref{Sharpness of weighted Young's inequality, power weight}\textbf{\ }
\begin{equation*}
\mathbf{(\mathbf{q},\mathbf{s})\in }A_{2}\cup A_{4}\mathbf{\Longrightarrow
\mathbb{L}(q_{1},s_{1})\ast \mathbb{L}(q_{2},s_{2})\subset \mathbb{L}(q,s),}
\end{equation*}
we only need to handle the case $q=\infty$ or $q_1=1$ or $q_2=1$ in $\mathcal {A}_4.$
The other cases can be deduced directly by the classical Young's inequality and Kerman's result (See Theorem A).

By a dual method, we only need to show the proof for $q=\infty.$ In this case, we have
\begin{equation*}
s<s_1,~s<s_2
\end{equation*}
and
\begin{equation*}
1=\frac{1}{q_1}+\frac{1}{q_2}+\frac{s_1+s_2-s}{n}.
\end{equation*}
We now use Proposition \ref{Integral capability of weight, power weight} to deduce
\begin{equation*}
\begin{split}
&\int_{\mathbb{R}^n}f(x-y)g(y)|x|^sdy
\\
= &
\int_{\mathbb{R}^n}f(x-y)|x-y|^{s_1}g(y)|y|^{s_2}\frac{|x|^s}{|x-y|^{s_1}|y|^{s_2}}dy
\\
\lesssim &
\|f(x)|x|^{s_1}\|_{L_x^{q_1}}\cdot \|g(x)|x|^{s_2}\|_{L_x^{q_2}}\cdot \left\|\frac{|x|^s}{|x-y|^{s_1}|y|^{s_2}} \right\|_{L_y^{{n}/{(s_1+s_2-s)}}}
\\
\lesssim &
\|f\|_{\mathbb{L}(q_1, s_1)}\cdot \|g\|_{\mathbb{L}(q_2, s_2)}.
\end{split}
\end{equation*}
\subsection{The proof of Theorem \ref{Sharpness of weighted Young's inequality, continuous form}.}$~$\\
Firstly, we introduce some notations. Denote, for $i=1,2,3,4,$
\begin{equation*}
\begin{split}
B&=\left\{(\textbf{q},\textbf{s})\in [1,\infty]^3\times \mathbb{R}^3: ~ L(q_1, s_1)\ast L(q_2, s_2) \subset L(q, s)  \right\},\\
B_i&=A_i\cap \left\{(\textbf{q},\textbf{s})\in [1,\infty]^3\times \mathbb{R}^3: ~1+\frac{1}{q}\geq \frac{1}{q_1}+\frac{1}{q_2}  \right\}.
\end{split}
\end{equation*}
Using the same strategy for the proof of Theorem \ref{Sharpness of weighted Young's inequality, discrete form}, we only need to show
\begin{equation*}
B=\bigcup_{i=1}^4B_i.
\end{equation*}
The inclusion $B\subset \bigcup_{i=1}^{4}B_{i}$ is based on the following
two propositions.
\begin{proposition}[Relationship between discrete and continuous weighted Young's inequality]\label{Relationship between discrete weighted Young's inequality and continuous weighted Young's inequality}
Let $1\leq q, q_i \leq \infty, s_i\in \mathbb{R}$, for $i=1,2$.
Then the inclusion
\begin{equation*}
L(q_1, s_1)\ast L(q_2, s_2) \subset L(q, s)
\end{equation*}
implies
\begin{equation*}
l(q_1, s_1)\ast l(q_2, s_2) \subset l(q, s).
\end{equation*}
\end{proposition}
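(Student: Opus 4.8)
The plan is to follow the template of Proposition~\ref{Implication method}, the point being that the absence of a singularity in the weight $\langle x\rangle$ lets us replace the punctured cubes $Q\setminus\tfrac18Q$ used there by full unit cubes, which streamlines the whole argument. As noted in the introduction it suffices to prove the discrete inclusion for nonnegative sequences, and if $\|\{a_k\}\|_{l(q_1,s_1)}$ or $\|\{b_k\}\|_{l(q_2,s_2)}$ is infinite there is nothing to prove; so fix nonnegative $\{a_k\}_{k\in\mathbb{Z}^n}\in l(q_1,s_1)$ and $\{b_k\}_{k\in\mathbb{Z}^n}\in l(q_2,s_2)$, put $Q=[-1/2,1/2]^n$, and define the companion functions $f=\sum_{k}a_k\chi_Q(\cdot-k)$ and $g=\sum_k b_k\chi_Q(\cdot-k)$. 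Since the interiors of the cubes $Q+k$ are pairwise disjoint and $\langle x\rangle\sim\langle k\rangle$ uniformly for $x\in Q+k$, and since $\langle x\rangle$ is bounded below by $1$ everywhere (this is precisely where no neighbourhood of the origin needs to be excised, in contrast to the power-weight case), a direct computation gives $\|f\|_{L(q_1,s_1)}\sim\|\{a_k\}\|_{l(q_1,s_1)}$ and $\|g\|_{L(q_2,s_2)}\sim\|\{b_k\}\|_{l(q_2,s_2)}$, with the usual supremum modification when $q_1=\infty$ or $q_2=\infty$.

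The crux is a pointwise lower bound for $f\ast g$ on the central half of each cube. For $m\in\mathbb{Z}^n$ and $x\in\tfrac12Q+m$, I would write
\[
(f\ast g)(x)=\sum_{k,j\in\mathbb{Z}^n}a_kb_j\int_{\mathbb{R}^n}\chi_Q(x-y-k)\chi_Q(y-j)\,dy,
\]
keep only the terms with $k+j=m$ (the remaining terms are nonnegative and only help), and use that for those terms a change of variables gives $\int_{\mathbb{R}^n}\chi_Q(x-y-k)\chi_Q(y-j)\,dy=\bigl|Q\cap\bigl((x-m)+Q\bigr)\bigr|\geq(3/4)^n$ whenever $|x-m|_\infty\leq\tfrac14$. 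Hence $(f\ast g)(x)\gtrsim\sum_{k+j=m}a_kb_j=(\{a_k\}\ast\{b_k\})(m)$. Integrating $\langle x\rangle^{sq}\sim\langle m\rangle^{sq}$ over the pairwise disjoint sets $\tfrac12Q+m$ and summing over $m$ (or taking suprema when $q=\infty$) yields $\|\{a_k\}\ast\{b_k\}\|_{l(q,s)}\lesssim\|f\ast g\|_{L(q,s)}$. Chaining this with the hypothesis $\|f\ast g\|_{L(q,s)}\lesssim\|f\|_{L(q_1,s_1)}\|g\|_{L(q_2,s_2)}$ and the two norm comparisons above gives $\|\{a_k\}\ast\{b_k\}\|_{l(q,s)}\lesssim\|\{a_k\}\|_{l(q_1,s_1)}\|\{b_k\}\|_{l(q_2,s_2)}$, which is the asserted discrete inclusion.

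The only point requiring any care---a mild one---is the uniform cube-overlap bound $\bigl|Q\cap((x-m)+Q)\bigr|\geq(3/4)^n$ for $x\in\tfrac12Q+m$: one must check that shrinking the domain of integration from $Q+m$ to $\tfrac12Q+m$ still leaves a definite overlap of translated cubes. Coordinatewise this reduces to the elementary identity $\bigl|[-\tfrac12,\tfrac12]\cap[t-\tfrac12,t+\tfrac12]\bigr|=1-|t|\geq\tfrac34$ for $|t|\leq\tfrac14$, and it is precisely the fact that here we may work with the full cube $Q$---rather than the punctured cube needed in Proposition~\ref{Implication method}---that keeps this step trivial.
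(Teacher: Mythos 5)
Your argument is correct, and it is a genuinely different route from the one the paper actually takes for this proposition. The paper remarks that the statement ``can be verified by the implication method'' of Proposition \ref{Implication method} but then deliberately gives another proof through modulation spaces: choosing window functions with $\phi=\phi_1\phi_2$, using the identity $V_{\phi}(fg)(x,\xi)=\big(V_{\phi_1}f(x,\cdot)\ast V_{\phi_2}g(x,\cdot)\big)(\xi)$ together with Minkowski's and H\"older's inequalities and the hypothesis $L(q_1,s_1)\ast L(q_2,s_2)\subset L(q,s)$ in the $\xi$-variable to obtain $\|fg\|_{M_{1,q}^{s}}\lesssim \|f\|_{M_{2,q_1}^{s_1}}\|g\|_{M_{2,q_2}^{s_2}}$, and then invoking (in advance) the necessity part of Proposition \ref{Relationship between the product on modulation spaces and the discrete weighted Young's inequality} to convert that product bound into $l(q_1,s_1)\ast l(q_2,s_2)\subset l(q,s)$. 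You instead carry out the discretization directly: companion step functions on full unit cubes, the observation $\langle x\rangle\sim\langle k\rangle$ on $Q+k$ (valid precisely because $\langle x\rangle$ has no singularity, so the puncturing $Q\setminus\tfrac18Q$ of Proposition \ref{Implication method} is unnecessary), the coordinatewise overlap bound $|Q\cap((x-m)+Q)|\geq(3/4)^n$ for $x\in\tfrac12Q+m$, and the resulting pointwise estimate $(f\ast g)(x)\gtrsim(\{a_k\}\ast\{b_k\})(m)$; restricting the sum to $k+j=m$ is legitimate since all terms are nonnegative, and your reductions (nonnegative sequences, finite norms, $q_i=\infty$ modifications) are all sound. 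What your proof buys is self-containedness and symmetry with Proposition \ref{Implication method}, at the cost of redoing the cube computation; what the paper's proof buys is economy (it reuses Proposition \ref{Relationship between the product on modulation spaces and the discrete weighted Young's inequality}, whose proof is independent of the other results) and it displays the link between the continuous weighted Young inequality and products on modulation spaces, which is the theme of Section 5.
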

\begin{proof}
One can verify this conclusion by the implication method, which we have used in Proposition \ref{Implication method}.
However, we here give another proof based on the pointview of the modulation spaces.
In the definition of $V_{\phi }f$, since the choice of the window function is flexible, we
choose two window functions $\phi _{1}$ and $\phi _{2}$  so that
$\phi =\phi _{1}\phi _{2}$ is also a window function. An easy
computation gives that
\begin{equation*}
V_{\phi}(fg)(x,\xi)=\big(V_{\phi_1}f(x,\cdot)\ast V_{\phi_2}g(x,\cdot)\big)(\xi).
\end{equation*}
By the Minkowski inequality and H\"{o}lder's inequality, we obtain
\begin{equation*}
\begin{split}
\|fg\|_{M_{1,q}^s}=&\|V_{\phi}(fg)(x,\xi)\|_{L_{\xi}(q,s)(L_{x}^1)}
\\
=&\|\big(V_{\phi_1}f(x,\cdot)\ast V_{\phi_2}g(x,\cdot)\big)(\xi)\|_{L_{\xi}(q,s)(L_{x}^1)}
\\
\leq &
\Big\|\int_{\mathbb{R}^n}\|V_{\phi_1}f(x,\xi-\eta) V_{\phi_2}g(x,\eta)\|_{L_x^1} d\eta\Big\|_{L_{\xi}(q,s)}
\\
\leq &
\Big\|\int_{\mathbb{R}^n}\|V_{\phi_1}f(x,\xi-\eta)\|_{L_x^{2}} \|V_{\phi_2}g(x,\eta)\|_{L_x^2} d\eta\Big\|_{L_{\xi}(q,s)}.
\end{split}
\end{equation*}
We now use the continuous weighted Young's inequality to deduce
\begin{equation*}
\begin{split}
&\Big\|\int_{\mathbb{R}^n}\|V_{\phi_1}f(x,\xi-\eta)\|_{L_x^{2}} \|V_{\phi_2}g(x,\eta)\|_{L_x^2} d\eta\Big\|_{L_{\xi}(q,s)}
\\
&\qquad\lesssim
\left\|\|V_{\phi_1}f(x,\xi)\|_{L_x^{2}}\right\|_{L_{\xi}(q_1,s_1)}\cdot\left\|\|V_{\phi_2}g(x,\xi)\|_{L_x^{2}}\right\|_{L_{\xi}(q_2,s_2)}
\\
&\qquad=
\|f\|_{M_{2,q_1}^{s_1}}\cdot \|g\|_{M_{2,q_2}^{s_2}}.
\end{split}
\end{equation*}
In the next section, we will show
Proposition \ref{Relationship between the product on modulation spaces and the discrete weighted Young's inequality},
which says that the boundedness
$\left\Vert fg\right\Vert _{M_{1,q}^{s}} \lesssim \left\Vert f\right\Vert _{M_{1,q_{1}}^{s_{1}}}\left\Vert
g\right\Vert _{M_{1,q_{2}}^{s_{2}}}$
implies
$l(q_{1},s_{1})\ast l(q_{2},s_{2})\subset l(q,s).$
Since the proof for Proposition
\ref{Relationship between the product on modulation spaces and the discrete weighted Young's inequality}
is independent of all other theorems, here we bring it in advance to obtain Proposition 4.1.
\end{proof}
\begin{proposition}\label{Continuous weighted Young's inequality, near the origin}
Let $1\leq q, q_i \leq \infty, s_i\in \mathbb{R}$, for $i=1,2$.
Then
\begin{equation*}
L(q_1, s_1)\ast L(q_2, s_2) \subset L(q, s)
\end{equation*}
implies
\begin{equation*}
1+\frac{1}{q}\geq \frac{1}{q_1}+\frac{1}{q_2}.
\end{equation*}
\end{proposition}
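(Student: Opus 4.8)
The plan is to localize near the origin, where the weight $\langle x\rangle^{s}$ is comparable to a constant, so that the hypothesized inequality restricted to functions supported in a small ball becomes a \emph{local, unweighted} Young inequality; the index relation $1+1/q\geq 1/q_1+1/q_2$ then falls out of a one-parameter scaling argument.

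Concretely, for $0<\epsilon<1$ I would test the inequality $\|f\ast g\|_{L(q,s)}\lesssim \|f\|_{L(q_1,s_1)}\|g\|_{L(q_2,s_2)}$ on $f=g=\chi_{B(0,\epsilon)}$. Since $\langle x\rangle\sim 1$ on $B(0,\epsilon)$ uniformly for $\epsilon<1$, one gets $\|f\|_{L(q_1,s_1)}\sim |B(0,\epsilon)|^{1/q_1}\sim \epsilon^{n/q_1}$ and likewise $\|g\|_{L(q_2,s_2)}\sim \epsilon^{n/q_2}$, with the convention $n/\infty=0$ covering the endpoint indices. For the left-hand side one has $(f\ast g)(x)=|B(0,\epsilon)\cap(x+B(0,\epsilon))|\gtrsim \epsilon^{n}$ whenever $|x|\leq \epsilon/2$, and again $\langle x\rangle\sim 1$ on that set, so $\|f\ast g\|_{L(q,s)}\gtrsim \epsilon^{n}\cdot\epsilon^{n/q}$ (the factor $\epsilon^{n/q}$ being $\sim 1$ when $q=\infty$).

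Since the implied constant in the hypothesis is independent of $f$ and $g$, combining the three estimates gives $\epsilon^{\,n+n/q}\lesssim \epsilon^{\,n/q_1+n/q_2}$ uniformly in $\epsilon\in(0,1)$; letting $\epsilon\to 0^{+}$ forces $n+n/q\geq n/q_1+n/q_2$, that is, $1+1/q\geq 1/q_1+1/q_2$, which is the desired conclusion. There is no real obstacle here: the only points requiring minor care are the two-sided comparison for the measure of the intersection of two balls and a uniform treatment of the cases $q,q_1,q_2=\infty$, both entirely routine, and the weights never enter because everything is confined to a fixed-radius neighborhood of the origin.
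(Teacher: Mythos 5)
Your proposal is correct and is essentially identical to the paper's own proof: the paper also tests the inequality on $f=g=\chi_{B(0,a)}$ with $0<a<1$, uses $(f\ast g)(x)\gtrsim a^{n}$ on $B(0,a/2)$ and $\langle x\rangle\sim 1$ there to get $a^{\,n+n/q}\lesssim a^{\,n/q_1+n/q_2}$, and lets $a\to 0$. No substantive difference.
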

\begin{proof}
For $0<a<1$, we define
\begin{equation*}
f(x)=g(x)=\chi_{B(0,a)}.
\end{equation*}
It is easy to verify that
\begin{equation*}
(f\ast g)(x)\gtrsim a^n
\end{equation*}
for $x\in B(0,\frac{a}{2})$.
Also, a direct calculation shows that
\begin{equation*}
\|f\|_{L(q_1,s_1)}\sim a^{{n}/{q_1}},~\|g\|_{L(q_2,s_2)}\sim a^{{n}/{q_2}}
\end{equation*}
and
\begin{equation*}
\|f\ast g\|_{L(q,s)}\gtrsim  a^{n+{n}/{q}}.
\end{equation*}
as $a\rightarrow 0$.
Hence, the inequality
\begin{equation*}
\|f \ast g\|_{L(q,s)}\lesssim \|f\|_{L(q_1,s_1)}\cdot \|g\|_{L(q_2,s_2)}
\end{equation*}
implies
\begin{equation*}
a^{n+{n}/{q}}\lesssim a^{{n}/{q_1}}\cdot a^{{n}/{q_2}}.
\end{equation*}
The conclusion of the proposition now trivially follows by letting $a \rightarrow 0$ in the above inequality.
\end{proof}
By Proposition \ref{Relationship between discrete weighted Young's inequality and continuous weighted Young's inequality},
Proposition \ref{Continuous weighted Young's inequality, near the origin}
and Theorem \ref{Sharpness of weighted Young's inequality, discrete form}, we now obtain the inclusion
\begin{equation*}
B \subset \bigcup_{i=1}^4B_i.
\end{equation*}
Next, we want to show the opposite inclusion
\begin{equation*}
\bigcup_{i=1}^{4}B_{i}\subset B\mathbf{.}\newline
\end{equation*}%
To this end, we need to show
\begin{equation*}
(\textbf{q},\textbf{s})\in \bigcup_{i=1}^4B_i\Longrightarrow L(q_1, s_1)\ast L(q_2, s_2) \subset L(q, s).
\end{equation*}
First, we show
\begin{equation}\label{for proof, 10}
(\textbf{q},\textbf{s})\in B_1\Longrightarrow L(q_1, s_1)\ast L(q_2, s_2) \subset L(q, s).
\end{equation}
Similar to the proof of Theorem \ref{Sharpness of weighted Young's inequality, discrete form}, we divide the proof into three cases.\\
Case 1: $s\geq 0, s_1\geq 0, s_2\geq 0;$ \\
Case 2: $s<0, s_1> 0, s_2>0;$\\
Case 3: $s<0, s_1\leq 0, s_2\geq 0$ or $s<0, s_1\geq 0, s_2\leq 0$.\\

Case 2 and Case 3 actually can be reduced to Case 1 with the following
arguments.

In Case 2, we choose
\begin{equation*}
\frac{1}{\bar{q}}=\max\left\{\frac{1}{q}+\frac{s}{n},~\frac{1}{q_1}+\frac{1}{q_2}-1,~0\right\}.
\end{equation*}
Then
\begin{equation*}
\frac{1}{q}\geq \frac{1}{\bar{q}},~
1+\frac{1}{\bar{q}}\geq \frac{1}{q_1}+\frac{1}{q_2}.
\end{equation*}
We can further choose $\bar{s}>0$ such that $1/q+s/n<1/\bar{q}+\bar{s}/n$, and then by Proposition \ref{Sharpness of embedding, continuous form}, we have
\begin{equation}\label{for proof, 14}
L(\bar{q},\bar{s})\subset L(q,s)
\end{equation}
and the new index $(\bar{\textbf{q}},\bar{\textbf{s}})=(\bar{q},q_1,q_2,\bar{s},s_1,s_2)$ belongs to Case 1.
So the conclusion in Case 2 follows from Case 1 and (\ref{for proof, 14}).

On the other hand, Case 3 can be reduced to Case 1 by a dual argument,
with a similar argument used in the proof of Theorem \ref{Sharpness of weighted Young's inequality, discrete form}.

Before we handle Case 1, we also need the following proposition which is
just a minor modification of Proposition \ref{For reduction, discrete form}.
So we omit its proof.
\begin{proposition}[For reduction, continuous form]\label{For reduction, continuous form}
Suppose $0< q, q_1, q_2\leq \infty$, $s>0$.
If
\begin{equation*}
\begin{cases}
s\leq s_1,~s\leq s_2,\\
\frac{1}{q}\leq\frac{1}{q_1},~\frac{1}{q}\leq\frac{1}{q_2},\\
1+\frac{1}{q}+\frac{s}{n}<\frac{1}{q_1}+\frac{s_1}{n}+\frac{1}{q_2}+\frac{s_2}{n},\\
1+\frac{1}{q}\geq \frac{1}{q_1}+\frac{1}{q_2},
\end{cases}
\end{equation*}
then we have
\begin{equation*}
L(q_1, s_1)\ast L(q_2, s_2) \subset L(q, s).
\end{equation*}
\end{proposition}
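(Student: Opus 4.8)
\emph{Proof proposal.} The plan is to transcribe the proof of Proposition~\ref{For reduction, discrete form}, replacing each discrete ingredient by its continuous counterpart, and to note that the extra hypothesis $1+1/q\ge 1/q_1+1/q_2$ (which has no analogue in the discrete statement) is precisely what is needed, since the continuous Young inequality of Proposition~\ref{Sharpness of the Young's inequality, continuous form} demands the \emph{equality} $1+1/q=1/q_1+1/q_2$ rather than an inequality. First I would set $1/r=1/q_1+1/q_2-1/q$ and $t=r'$. The hypothesis $1/q\le 1/q_2$ gives $1/r\ge 1/q_1\ge 0$, while $1+1/q\ge 1/q_1+1/q_2$ gives $1/r\le 1$; hence $r\in[1,\infty]$ and $1/t=1-1/r=1+1/q-1/q_1-1/q_2$. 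Rearranging the strict hypothesis $1+1/q+s/n<1/q_1+s_1/n+1/q_2+s_2/n$ yields $1/t<(s_1+s_2-s)/n$, that is, $t>n/(s_1+s_2-s)$ (and $s_1+s_2-s\ge s>0$). Since $s>0$, $s\le s_1$ and $s\le s_2$, Proposition~\ref{Integral capability of weight, continuous form} then gives
\[
\left\|\frac{\langle x\rangle^s}{\langle x-y\rangle^{s_1}\langle y\rangle^{s_2}}\right\|_{L_y^t}\lesssim 1
\]
uniformly in $x\in\mathbb{R}^n$; the fact that our inequality on $t$ is strict is exactly what keeps us inside the (possibly half-open) admissible range in that proposition when $s=s_1$ or $s=s_2$.

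Assume first $r<\infty$. For nonnegative $f,g$ I would split
\[
(f\ast g)(x)\,\langle x\rangle^s=\int_{\mathbb{R}^n}f(x-y)\langle x-y\rangle^{s_1}\,g(y)\langle y\rangle^{s_2}\cdot\frac{\langle x\rangle^s}{\langle x-y\rangle^{s_1}\langle y\rangle^{s_2}}\,dy,
\]
apply H\"older in $y$ with exponents $r$ and $t$, and use the uniform bound above to obtain
\[
(f\ast g)(x)\,\langle x\rangle^s\lesssim\Big(\big(f\langle\cdot\rangle^{s_1}\big)^r\ast\big(g\langle\cdot\rangle^{s_2}\big)^r\Big)(x)^{1/r}.
\]
Taking the $L_x^q$-norm and applying Proposition~\ref{Sharpness of the Young's inequality, continuous form} to $L^{q_1/r}\ast L^{q_2/r}\subset L^{q/r}$ --- whose requirements $1+r/q=r/q_1+r/q_2$ and $r/q\le r/q_1,\,r/q\le r/q_2$ hold by the choice of $r$ and by $1/q\le 1/q_1,\,1/q\le 1/q_2$ --- gives
\[
\|f\ast g\|_{L(q,s)}\lesssim\big\|(f\langle\cdot\rangle^{s_1})^r\big\|_{L^{q_1/r}}^{1/r}\big\|(g\langle\cdot\rangle^{s_2})^r\big\|_{L^{q_2/r}}^{1/r}=\|f\|_{L(q_1,s_1)}\|g\|_{L(q_2,s_2)}.
\]

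For the remaining case $r=\infty$, the constraints $1/q\le 1/q_1$, $1/q\le 1/q_2$ combined with $1/q=1/q_1+1/q_2$ force $q=q_1=q_2=\infty$ and $t=1$; then the same pointwise splitting, followed by the trivial bounds $f(x-y)\langle x-y\rangle^{s_1}\le\|f\|_{L(\infty,s_1)}$ and $g(y)\langle y\rangle^{s_2}\le\|g\|_{L(\infty,s_2)}$ together with the $L_y^1$-estimate already recorded, yields $(f\ast g)(x)\langle x\rangle^s\lesssim\|f\|_{L(\infty,s_1)}\|g\|_{L(\infty,s_2)}$ immediately.

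I do not anticipate a real obstacle: the proof is essentially bookkeeping. The one point that needs care is verifying that the four hypotheses translate exactly into (i) $r\ge 1$ (from $1+1/q\ge 1/q_1+1/q_2$), (ii) $1/q\le 1/q_i$ for the continuous Young step, (iii) $t>n/(s_1+s_2-s)$ (from the strict weight inequality) so that Proposition~\ref{Integral capability of weight, continuous form} applies, and (iv) the degeneration $r=\infty$, $q=q_1=q_2=\infty$ at the endpoint. It is worth flagging that, in contrast to Proposition~\ref{For reduction, discrete form}, here one cannot afford to cap $1/r$ at $1$, because the continuous Young inequality insists on the equality $1+1/q=1/q_1+1/q_2$; this is the structural reason the hypothesis $1+1/q\ge 1/q_1+1/q_2$ appears in the statement.
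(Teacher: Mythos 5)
Your proof is correct and is exactly the route the paper intends: the paper omits the argument, stating that this proposition is a minor modification of Proposition \ref{For reduction, discrete form}, and your write-up carries out precisely that modification (with Propositions \ref{Integral capability of weight, continuous form} and \ref{Sharpness of the Young's inequality, continuous form} replacing their discrete counterparts). Your remark that the hypothesis $1+1/q\ge 1/q_1+1/q_2$ replaces the capping of $1/r$ at $1$, because continuous Young forces equality, correctly identifies the only point where the adaptation is not literal.
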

Now we proceed the proof in Case 1.

If $s>0$, we set
\begin{equation*}
\widetilde{q_i}=
\begin{cases}
q_i, &\text{if~} \frac{1}{q}\leq \frac{1}{q_i}
\\
q, &\text{if~} \frac{1}{q}> \frac{1}{q_i}
\end{cases}
\end{equation*}
for $i=1,2$. Then we choose $\ \widetilde{s_i}\ $ such that $\tilde{s_i}=s_i$ for $1/q\leqslant 1/q_i$,
\begin{equation*}
1/\tilde{q_i}+\tilde{s_i}/n< 1/q_i+s_i/n
\end{equation*}
when $1/q>1/q_i$,
and
\begin{equation*}
\begin{cases}
s\geq 0,~s\leq \widetilde{s_1},~s\leq \widetilde{s_2},\\
\frac{1}{q}\leq\frac{1}{\widetilde{q_1}},~\frac{1}{q}\leq\frac{1}{\widetilde{q_2}},\\
1+\frac{1}{q}+\frac{s}{n}<\frac{1}{\widetilde{q_1}}+\frac{\widetilde{s_1}}{n}+\frac{1}{\widetilde{q_2}}+\frac{\widetilde{s_2}}{n},\\
1+\frac{1}{q}\geq \frac{1}{\widetilde{q_1}}+\frac{1}{\widetilde{q_2}}.
\end{cases}
\end{equation*}
Using Proposition \ref{Sharpness of embedding, continuous form}, we obtain following embedding relations:
\begin{equation}\label{for proof, 15}
L(q_1,s_1)\subset L(\widetilde{q_1},\widetilde{s_1}),~L(q_2,s_2)\subset L(\widetilde{q_2},\widetilde{s_2}).
\end{equation}

The conclusion (\ref{for proof, 10}) follows from the embedding relations (\ref{for proof, 15}) and
Proposition \ref{For reduction, continuous form}.

If $s=0,~s_{1},s_{2}>0$, on can choose a small positive constant $\tilde{s}$,
such that the new index group $(q,q_1,q_2,\tilde{s},s_1,s_2)$ belongs to the previous case $s>0$.
Thus, we have $L(q_1, s_1)\ast L(q_2, s_2) \subset L(q,\tilde{s})\subset L(q, s)$.

If $s=s_1=0,~s_2>0$, we have
\begin{equation*}
\begin{cases}
s=s_1=0,~s_2>0,\\
1+\frac{1}{q}<\frac{1}{q_1}+\frac{1}{q_2}+\frac{s_2}{n},\\
\frac{1}{q}\leq \frac{1}{q_1},~\frac{1}{q}<\frac{1}{q_2}+\frac{s_2}{n},\\
1+\frac{1}{q}\geq\frac{1}{q_1}+\frac{1}{q_2}.
\end{cases}
\end{equation*}

Choose
\begin{equation*}
\frac{1}{r}=1+\frac{1}{q}-\frac{1}{q_1},
\end{equation*}
and observe that
\begin{equation*}
\frac{1}{q_2}\leq \frac{1}{r}\leq 1,~\frac{1}{r}<\frac{1}{q_2}+\frac{s_2}{n}.
\end{equation*}
Clearly, $r\in [1,\infty]$.
We then use Young's inequality and the embedding relation $L(q_2,s_2)\subset L^r$
to deduce that
\begin{equation*}
\begin{split}
\|f\ast g\|_{L^q}\lesssim &\|f\|_{L^{q_1}}\cdot\|g\|_{L^r}
\\
\lesssim &
\|f\|_{L^{q_1}}\cdot \|g\|_{L(q_2,s_2)}.
\end{split}
\end{equation*}
The case $s=s_2=0,~s_1>0$ can be handled similarly.
We have now completed the proof for (\ref{for proof, 10}).

The proof of
\begin{equation*}
(\textbf{q},\textbf{s})\in B_2\Longrightarrow L(q_1, s_1)\ast L(q_2, s_2) \subset L(q, s)
\end{equation*}
is a trivial one, we omit the details.

The proof for
\begin{equation*}
(\textbf{q},\textbf{s})\in B_3\Longrightarrow L(q_1, s_1)\ast L(q_2, s_2) \subset L(q, s)
\end{equation*}
can proceed following the same method used in the proof of $A\cap X_3=A_3$, we also omit the details.

Finally, we show
\begin{equation*}
(\textbf{q},\textbf{s})\in B_4\Longrightarrow L(q_1, s_1)\ast L(q_2, s_2) \subset L(q, s).
\end{equation*}
If $q=\infty$ or $q_1=1$ or $q_2=1$,
we can get the conclusion by the same method used in the proof of $A\cap X_4=A_4$ and Proposition \ref{Integral capability of weight, continuous form}. We only give the proof for $q=\infty$, since the other cases can be handled by a dual argument.
If $q=\infty$, we have $s>0, s<s_1, s<s_2$.
Take
\begin{equation*}
\frac{1}{t}=1-\frac{1}{q_1}-\frac{1}{q_2}=\frac{s_1+s_2-s}{n}.
\end{equation*}
Using Proposition \ref{Integral capability of weight, continuous form}, we deduce that
\begin{equation*}
  \begin{split}
    |\int_{\mathbb{R}^n}&f(x-y)g(y)dy\langle x\rangle^s|\\
    = &
    |\int_{\mathbb{R}^n}f(x-y)\langle x-y\rangle^{s_1}g(y)\langle y\rangle^{s_2}
    \frac{\langle x\rangle^s}{\langle x-y\rangle^{s_1}\langle y\rangle^{s_2}}dy|
    \\
    \lesssim &
    \|f\|_{l(q_1,s_1)}\|g\|_{L(q_2,s_2)}\left\|\frac{\langle x\rangle^s}{\langle x-y\rangle^{s_1}\langle y\rangle^{s_2}}\right\|_{L_y^t}
    \\
    \lesssim &
    \|f\|_{l(q_1,s_1)}\|g\|_{L(q_2,s_2)}.
  \end{split}
\end{equation*}
It implies that $\|f\ast g\|_{L(\infty,s)}\lesssim \|f\|_{l(q_1,s_1)}\|g\|_{L(q_2,s_2)}.$

Next, we consider the case for $q\neq \infty, q_1\neq 1,q_2\neq 1$.
By the symmetry of $\ s, s_1, s_2$, it suffices to handle the case $s_1, s_2\geq 0.$
In this case, we have
\begin{equation*}
|x|^{s_1}\leq \langle x\rangle^{s_1},~|x|^{s_2}\leq \langle x\rangle^{s_2},
\end{equation*}
and
\begin{equation*}
\|f\|_{\mathbb{L}(q_1,s_1)}\leq \|f\|_{L(q_1,s_1)},~\|g\|_{\mathbb{L}(q_2,s_2)}\leq \|g\|_{L(q_2,s_2)}.
\end{equation*}
On the other hand, we write
\begin{equation*}
\begin{split}
\|f\ast g\|_{L(q,s)}\leq
\bigg(&\int_{B(0,1)}|(f\ast g)(x)|^q\langle x \rangle^{sq} dx\bigg)^{{1}/{q}}
\\
&+\bigg(\int_{(B(0,1))^c}|(f\ast g)(x)|^q\langle x \rangle^{sq} dx\bigg)^{{1}/{q}}
\\
=:&
\widetilde{I}+\widetilde{I\!I}.
\end{split}
\end{equation*}
Using Theorem A, we have
\begin{equation*}
\begin{split}
\widetilde{I\!I}\sim \bigg(\int_{B^c(0,1)}|(f\ast g)(x)|^q|x|^{sq} dx\bigg)^{{1}/{q}}
\lesssim &\|f \ast g\|_{\mathbb{L}(q,s)}
\\
\lesssim &
\|f\|_{\mathbb{L}(q_1,s_1)}\cdot \|g\|_{\mathbb{L}(q_2,s_2)}
\\
\lesssim &
\|f\|_{L(q_1,s_1)}\cdot \|g\|_{L(q_2,s_2)}.
\end{split}
\end{equation*}
For $\widetilde{I}$, if
\begin{equation*}
1+\frac{1}{q}>\frac{1}{q_1}+\frac{1}{q_2},
\end{equation*}
one can choose
\begin{equation*}
0<\frac{1}{\widetilde{q}}<\frac{1}{q},
\end{equation*}
so that
\begin{equation*}
\begin{split}
\widetilde{I}
\lesssim
\bigg(\int_{B(0,1)}|(f\ast g)(x)|^{\widetilde{q}}\langle x \rangle^{\widetilde{sq}} dx\bigg)^{{1}/{\widetilde{q}}}
\leq
\|f\ast g\|_{L(\widetilde{q},s)},
\end{split}
\end{equation*}
and the new index
$(\widetilde{\textbf{q}},\widetilde{\textbf{s}})=(\widetilde{q},q_1,q_2,s,s_1,s_2)$ belongs to $B_1$.
By the fact that $(\textbf{q},\textbf{s})\in B_1\Longrightarrow L(q_1, s_1)\ast L(q_2, s_2) \subset L(q, s)$, we deduce the inequality
\begin{equation*}
\|f\ast g\|_{L(\widetilde{q},s)}
\lesssim
\|f\|_{L(q_1,s_1)}\cdot \|g\|_{L(q_2,s_2)}.
\end{equation*}
Combining $\widetilde{I}$ with $\widetilde{I\!I}$, we obtain
\begin{equation*}
\|f\ast g\|_{L(q,s)}
\lesssim
\widetilde{I}+\widetilde{I\!I}
\lesssim
\|f\|_{L(q_1,s_1)}\cdot \|g\|_{L(q_2,s_2)}.
\end{equation*}
If
\begin{equation*}
1+\frac{1}{q}=\frac{1}{q_1}+\frac{1}{q_2},
\end{equation*}
we have $s=s_1+s_2$, then $s=s_1=s_2=0$. It is a trivial case.

We complete the proof of Theorem \ref{Sharpness of weighted Young's inequality, continuous form}.

\section{Application on modulation spaces---Proof of Theorem
\ref{Sharpness of product on modulation spaces, bilinear case} and \ref{Sharp conditions for the multi-linear Fourier multipliers on modulation spaces}}

\subsection{The proof of Theorem \ref{Sharpness of product on modulation spaces, bilinear case}.}$~$\\
We first show the relationship between the product on modulation spaces and the discrete weighted Young's inequality.
We need to establish the following proposition.
\begin{proposition}[Relationship between the product on modulation spaces and the discrete weighted Young's inequality]\label{Relationship between the product on modulation spaces and the discrete weighted Young's inequality}
Let $1\leq p, q, p_i,q_i \leq \infty, s_i\in \mathbb{R}$, for $i=1,2$.
Then
\begin{equation*}
\|fg\|_{M_{p,q}^s}\lesssim \|f\|_{M_{p_1,q_1}^{s_1}}\|g\|_{M_{p_2,q_2}^{s_2}}
\end{equation*}
holds for $f,g \in \mathscr{S}(\mathbb{R}^{n})$ if and only if
\begin{equation*}
\frac{1}{p}\leq \frac{1}{p_1}+\frac{1}{p_2}
\end{equation*}
and
\begin{equation*}
l(q_1, s_1)\ast l(q_2, s_2) \subset l(q, s).
\end{equation*}
\end{proposition}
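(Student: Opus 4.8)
The plan is to work throughout with the equivalent norm on $M_{p,q}^s$ coming from the frequency-uniform decomposition, namely $\|h\|_{M_{p,q}^s}\sim\big(\sum_{k}\langle k\rangle^{sq}\|\Box_k h\|_{p}^{q}\big)^{1/q}$, which turns the product inequality into a weighted estimate for sequences indexed by $\mathbb{Z}^n$. Two elementary facts carry the bulk of the work. First, since $\mathcal F^{-1}\sigma\in\mathscr S$, the operators $\Box_k$ are bounded on every $L^p$ uniformly in $k$. Second, by the support properties of $\{\sigma_k\}$, the function $\Box_{k_1}f\cdot\Box_{k_2}g$ has Fourier support in a ball of fixed radius centred at $k_1+k_2$, so that $\Box_k(\Box_{k_1}f\cdot\Box_{k_2}g)=0$ unless $|k-k_1-k_2|_\infty\le C_0$ for an absolute constant $C_0$.

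For the sufficiency, write $\Box_k(fg)=\sum_{|k-k_1-k_2|_\infty\le C_0}\Box_k(\Box_{k_1}f\cdot\Box_{k_2}g)$. Applying boundedness of the outer $\Box_k$, then Lemma \ref{embedding of Lp with Fourier compact support} (the product has Fourier support in a fixed ball, and $1/p\le 1/p_1+1/p_2=:1/r$ means $r\le p$), then Hölder's inequality, we get $\|\Box_k(fg)\|_p\lesssim\sum_{|k-k_1-k_2|_\infty\le C_0}\|\Box_{k_1}f\|_{p_1}\|\Box_{k_2}g\|_{p_2}$. Setting $a_k=\|\Box_k f\|_{p_1}$ and $b_k=\|\Box_k g\|_{p_2}$, the right-hand side is a finite sum of translates of the convolution $a\ast b$; so summing against $\langle k\rangle^{sq}$ and using $\langle k\rangle\sim\langle k-m\rangle$ for $|m|_\infty\le C_0$ yields $\|fg\|_{M_{p,q}^s}\lesssim\|a\ast b\|_{l(q,s)}$. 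Invoking the hypothesis $l(q_1,s_1)\ast l(q_2,s_2)\subset l(q,s)$ then gives $\|a\ast b\|_{l(q,s)}\lesssim\|a\|_{l(q_1,s_1)}\|b\|_{l(q_2,s_2)}=\|f\|_{M_{p_1,q_1}^{s_1}}\|g\|_{M_{p_2,q_2}^{s_2}}$.

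For the necessity, the condition $1/p\le 1/p_1+1/p_2$ is extracted by a dilation argument: fix $f,g\in\mathscr S$ with $\widehat f,\widehat g$ supported in a small cube around the origin and $fg\not\equiv 0$, so that $\|f\|_{M_{p_1,q_1}^{s_1}}\sim\|f\|_{p_1}$, $\|g\|_{M_{p_2,q_2}^{s_2}}\sim\|g\|_{p_2}$ and $\|fg\|_{M_{p,q}^s}\sim\|fg\|_{p}$; applying the assumed inequality to $f(\lambda\cdot),g(\lambda\cdot)$ with $\lambda\to0^+$ and comparing powers of $\lambda$ forces $1/p\le1/p_1+1/p_2$. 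To obtain the convolution inclusion it suffices, by monotone convergence (all sequences may be taken nonnegative), to prove it for finitely supported nonnegative $\{a_k\},\{b_k\}$. Pick $\psi\in\mathscr S$ with $\widehat\psi$ supported in a sufficiently small neighbourhood of the origin and set $f=\sum_k a_k e^{2\pi i k\cdot x}\psi$, $g=\sum_k b_k e^{2\pi i k\cdot x}\psi$; these are Schwartz, and by the choice of $\widehat\psi$ one checks $\Box_j f=a_j e^{2\pi i j\cdot x}\psi$, hence $\|f\|_{M_{p_1,q_1}^{s_1}}\sim\|a\|_{l(q_1,s_1)}$, and similarly for $g$. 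Since $fg=\sum_{l}(a\ast b)(l)\,e^{2\pi i l\cdot x}\psi^2$ with $\widehat{\psi^2}$ still supported near the origin, one likewise gets $\|fg\|_{M_{p,q}^s}\sim\|a\ast b\|_{l(q,s)}$, and the assumed product inequality becomes exactly $\|a\ast b\|_{l(q,s)}\lesssim\|a\|_{l(q_1,s_1)}\|b\|_{l(q_2,s_2)}$.

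The genuinely routine parts are the support bookkeeping for $\{\sigma_k\}$ and the verification that the companion functions $f,g,fg$ have the asserted modulation-space norms. The one step demanding care is the use of Lemma \ref{embedding of Lp with Fourier compact support} in the sufficiency: it is precisely this Bernstein-type gain from $L^r$ down to $L^p$ that converts the spatial exponent condition $1/p\le1/p_1+1/p_2$ into a usable estimate, and one must verify that the Fourier support of $\Box_{k_1}f\cdot\Box_{k_2}g$ lies in a ball whose radius is independent of $k_1,k_2$, so that the implied constant in the lemma is uniform.
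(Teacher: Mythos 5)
Your proposal is correct and follows essentially the same route as the paper: sufficiency via the almost-orthogonal decomposition $\Box_k(fg)=\sum_{i+j=k+l,\,|l|\le c(n)}\Box_k(\Box_i f\cdot\Box_j g)$ combined with the Bernstein-type Lemma \ref{embedding of Lp with Fourier compact support}, H\"older, and the assumed discrete weighted Young inequality; necessity via a dilation argument for $1/p\le 1/p_1+1/p_2$ and via lattice-modulated copies of a bump with small Fourier support (the paper's $f=\sum_k a_k h(x)e^{2\pi i k\cdot x}$) to convert the product inequality into $\|a\ast b\|_{l(q,s)}\lesssim\|a\|_{l(q_1,s_1)}\|b\|_{l(q_2,s_2)}$. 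The only differences are cosmetic, e.g.\ your reduction to finitely supported sequences, which handles convergence slightly more carefully than the paper's phrasing.
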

\begin{proof} We first show the necessity part.
Recalling that $\{\sigma_k\}_k$ is a smooth decomposition of $\mathbb{R}^n$ as defined in Section 2.
We can choose a function  $\hat{h}(\xi)$ in $C_c^{\infty}(\mathbb{R}^n)$ such that $\hat{h}\subset B(0,{1}/{8})$,
and that $\| h\|_{L^{p_1}}, \| h\|_{L^{p_2}}$ and $\| h\ast h\|_{L^{p}}$ are not equal to 0.
Define
\begin{equation*}
\hat{v}_{\lambda}=\hat{h}(\xi/\lambda),~0\leqslant \lambda\leqslant 1.
\end{equation*}
Observing that $\Box_0v_{\lambda}=v_{\lambda}$ and $\Box_kv_{\lambda}=v_{\lambda}$ for $k\neq 0$.
Then
\begin{equation*}
\|v_{\lambda}\|_{M_{p,q}^s}
=
\left(\sum_{k\in \mathbb{Z}^n}\|\Box_kv_{\lambda}\|^q_{L^p}\right)^{1/q}
=
\|\Box_0v_{\lambda}\|_{L^p}
=
\|v_{\lambda}\|_{L^p}.
\end{equation*}
Similarly, we obtain
\begin{equation*}
\|v_{\lambda}\cdot v_{\lambda}\|_{M_{p,q}^s}
=
\|v_{\lambda}\cdot v_{\lambda}\|_{L^p}.
\end{equation*}

Then we use the assumption
\begin{equation*}
\|v_{\lambda}\cdot v_{\lambda}\|_{M_{p,q}^s}\lesssim \|v_{\lambda}\|_{M_{p_1,q_1}^{s_1}}\|v_{\lambda}\|_{M_{p_2,q_2}^{s_2}}
\end{equation*}
to deduce
\begin{equation*}
  \begin{split}
    \lambda^{n(2-{1}/{p})}\|h^2\|_{L^p}\sim &\|v_{\lambda}\cdot v_{\lambda}\|_{L^p}
    \\
    \lesssim &
    \|v_{\lambda}\|_{L^{p_1}}\|v_{\lambda}\|_{L^{p_2}}
    \sim
    \lambda^{n(2-1/p_1-1/p_2)}.
  \end{split}
\end{equation*}
Letting $\lambda \rightarrow 0$, we obtain
\begin{equation*}
\frac{1}{p}\leq \frac{1}{p_1}+\frac{1}{p_2}.
\end{equation*}
Denote
\begin{equation*}
\hat{h_k}(\xi)=\hat{h}(\xi-k).
\end{equation*}
Let $a_k,b_k\in \mathbb{R}^{+}$ for $k\in \mathbb{Z}^n$.
By assuming that the following two series converge, we define two functions
\begin{equation*}
f(x)=\sum_{k\in \mathbb{Z}^n}a_kh(x)e^{2\pi i k\cdot x},\hspace{6mm}g(x)=\sum_{k\in \mathbb{Z}^n}b_kh(x)e^{2\pi i k\cdot x}.
\end{equation*}
We have
\begin{equation*}
(fg)(x)=\sum_{j,l\in \mathbb{Z}^n}a_jb_lh^2(x) e^{2\pi i (j+l)\cdot x}.
\end{equation*}
Observing that
\begin{eqnarray*}
\Vert \Box _{k}(fg)\Vert _{L^{p}} &=&\Vert
\sum_{j+l=k}a_{j}b_{l}h^{2}(x)e^{2\pi i(j+l)\cdot x}\Vert _{L^{p}(dx)}\simeq
\sum_{j+l=k}a_{j}b_{l}, \\
\Vert \Box _{k}(f)\Vert _{L^{p_1}} &\simeq &a_{k},\text{ \ }\Vert \Box
_{k}(g)\Vert _{L^{p_2}}\simeq b_{k}\text{ \ for all \ }k\in \mathbb{Z}^{n},
\end{eqnarray*}
by the definition of modulation space, we use the assumption
\begin{equation*}
\|fg\|_{M_{p,q}^s}\lesssim \|f\|_{M_{p_1,q_1}^{s_1}}\|g\|_{M_{p_2,q_2}^{s_2}}
\end{equation*}
to deduce
\begin{equation*}
\|\{a_k\}\ast \{b_k\}\|_{l(q,s)}\lesssim \|\{a_k\}\|_{l(q_1,s_1)}\|\{b_k\}\|_{l(q_2,s_2)}.
\end{equation*}

We turn to show the sufficiency of the proposition. Using the almost
orthogonality of the frequency projections $\sigma _{k}$, we have that for all  \ $k\in \mathbb{Z}^{n}$,
\begin{equation*}
\Box_k(fg)=\sum_{i,j\in \mathbb{Z}^n}\Box_k(\Box_if\cdot \Box_jg)
=\sum_{|l|\leq c(n)}\sum_{i+j=k+l}\Box_k(\Box_if\cdot \Box_jg),
\end{equation*}
where \ $c(n)$ \ is a constant depending only on \ $n.$
By the fact that $\Box_k$ is an $L^p$ multiplier, we use H\"{o}lder's inequality and Lemma \ref{embedding of Lp with Fourier compact support} to deduce that
\begin{equation}\label{for proof, 16}
\begin{split}
\|\Box_k(fg)\|_{L^p}\lesssim & \|\Box_k(fg)\|_{L^r}
\\
=&\|\sum_{|l|\leq c(n)}\sum_{i+j=k+l}\Box_k(\Box_if\cdot \Box_jg)\|_{L^r}
\\
\leq &
\sum_{|l|\leq c(n)}\sum_{i+j=k+l}\|\Box_if\|_{L^{p_1}}\|\Box_jg\|_{L^{p_2}},
\end{split}
\end{equation}
where $1/r=1/p_1+1/p_2$.
Then the discrete weighted Young's inequality implies that
\begin{equation}\label{for proof, 17}
\begin{split}
\|fg\|_{M_{p,q}^s}
= &
\big\|\{\|\Box_k(fg)\|_{L^p}\}\big\|_{l(q,s)}
\\
\lesssim &
\big\|\{\|\Box_if\|_{L^{p_1}}\}\ast \{\|\Box_jg\|_{L^{p_2}}\}\big\|_{l(q,s)}
\\
\lesssim &
\big\|\{\|\Box_kf\|_{L^{p_1}}\}\big\|_{l(q_1,s_1)}
\cdot
\big\|\{\|\Box_kg\|_{L^{p_2}}\}\big\|_{l(q_2,s_2)}
\\
=&
\|f\|_{M_{p_1,q_1}^{s_1}}\|g\|_{M_{p_2,q_2}^{s_2}}.
\end{split}
\end{equation}
Proposition \ref{Relationship between the product on modulation spaces and the discrete weighted Young's inequality} is proved.
\end{proof}
Now Theorem \ref{Sharpness of product on modulation spaces, bilinear case} can be verified immediately by the above
Proposition \ref{Relationship between the product on modulation spaces and the discrete weighted Young's inequality}
together with Theorem \ref{Sharpness of weighted Young's inequality, discrete form}.

\subsection{The proof of Theorem \ref{Sharp conditions for the multi-linear Fourier multipliers on modulation spaces}}$~$\\

To prove the necessity part, we take the special bilinear Fourier multiplier
$T(f,g)=fg$. Choose
\begin{equation*}
p=1,~p_1=2,~p_2=2.
\end{equation*}
By H\"{o}lder's inequality,
\begin{equation*}
\|fg\|_{L^p}\lesssim \|f\|_{L^{p_1}}\|g\|_{L^{p_2}}.
\end{equation*}
Hence, using the assumption of $(\mathbf{q},\mathbf{s})$ we have
\begin{equation*}
\|T(f,g)\|_{M_{p,q}^s}=\|fg\|_{M_{p,q}^s}\lesssim \|f\|_{M_{p_1,q_1}^{s_1}}\|g\|_{M_{p_2,q_2}^{s_2}}.
\end{equation*}
Then the conclusion follows by Theorem \ref{Sharpness of product on modulation spaces, bilinear case}.

For the sufficiency part, we notice that
\begin{equation*}
\Box_k\big(T(f,g)\big)=\sum_{i,j\in \mathbb{Z}^n}\Box_k\big(T(\Box_if, \Box_jg)\big)
=\sum_{|l|\leq c(n)}\sum_{i+j=k+l}\Box_k\big(T(\Box_if, \Box_jg)\big).
\end{equation*}
Recall the assumption that $T: L^{p_1}\times L^{p_2}\rightarrow L^p$, then
\begin{equation*}
\|T(\Box_if, \Box_jg)\|_{L^p}\lesssim \|\Box_if\|_{L^{p_1}}\|\Box_jg\|_{L^{p_2}}.
\end{equation*}
Using the fact that $\Box _{k}$ is an $L^{p}$ multiplier, we deduce that
\begin{equation*}
\begin{split}
\|\Box_k\big(T(f,g)\big)\|_{L^p}=&\|\sum_{|l|\leq c(n)}\sum_{i+j=k+l}\Box_k\big(T(\Box_if, \Box_jg)\big)\|_{L^p}
\\
\leq &
\sum_{|l|\leq c(n)}\sum_{i+j=k+l}\|\Box_if\|_{L^{p_1}}\|\Box_jg\|_{L^{p_2}}
\end{split}
\end{equation*}
as in the proof of (\ref{for proof, 16}).
The rest part of this proof is the same as the proof of (\ref{for proof, 17}).

\section{Fractional integral operators:
Proof of Theorem \ref{Sharpness of fractional integration operator, discrete form} and Theorem \ref{Sharpness of fractional integration operator, power weight}.}
\subsection{Proof of Theorem \ref{Sharpness of fractional integration operator, discrete form}}

Firstly, we introduce some notations for simplicity.
Denote
\begin{equation*}
C=\{(q, p, s, t) \in [1,\infty]^2\times \mathbb{R}^2:~\mathcal {I}_{\lambda}: l(p,t)\rightarrow l(q,s) \}.
\end{equation*}
Use $C_i$ to denote the set of all $(q, p, s, t) \in [1,\infty]^2\times \mathbb{R}^2$ satisfying condition $\mathcal {C}_i$
mentioned in Theorem \ref{Sharpness of fractional integration operator, discrete form} respectively, $i=1,3,4$.
Let $C_2=\emptyset$.
We use $Z$ to denote the set of all $(\mathbf{q},\mathbf{s})\in [1,\infty]^3\times \mathbb{R}^3$ satisfying
\begin{equation*}
\begin{cases}
s\leq t,\\
\frac{\lambda}{n}+\Big(\frac{1}{q}+\frac{s}{n}\Big)\vee 0\leq \Big(\frac{1}{p}+\frac{t}{n}\Big)\vee 0,\\
\frac{\lambda}{n}+\frac{1}{q}+\frac{s}{n}\leq 1,\\
(q', -s)=(1, \lambda-n)~\text{if}~ \frac{\lambda}{n}+\frac{1}{q}+\frac{s}{n}=1,\\
(p, t)=(1, \lambda-n)~\text{if}~ \frac{\lambda}{n}=\frac{1}{p}+\frac{t}{n}.
\end{cases}
\end{equation*}
We use $Z_1$ to denote the set of all $(\mathbf{q},\mathbf{s})\in Z$ satisfying
\begin{equation*}
\frac{\lambda}{n}+\Big(\frac{1}{q}+\frac{s}{n}\Big)\vee 0< \Big(\frac{1}{p}+\frac{t}{n}\Big)\vee 0.
\end{equation*}
Then $Z_1=C_1.$
We also use $Z_i ~(i=2, 3, 4)$ to denote the sets of all $(\mathbf{q},\mathbf{s})\in Z$ satisfying
\begin{equation*}
\frac{\lambda}{n}+\Big(\frac{1}{q}+\frac{s}{n}\Big)\vee 0=\Big(\frac{1}{p}+\frac{t}{n}\Big)\vee 0,
\end{equation*}
and
\begin{equation*}
\begin{cases}
\frac{1}{q}+\frac{s}{n}=0, ~\text{if}~i=2,\\
\frac{1}{q}+\frac{s}{n}<0, ~\text{if}~i=3,\\
\frac{1}{q}+\frac{s}{n}>0, ~\text{if}~i=4.
\end{cases}
\end{equation*}
To prove the theorem, we will use the same strategy as before. We first show
$C\subset Z$, then verify $C\cap Z_{j}=C_{j}$ for $j=1,2,3,4$. So, the
conclusion follows from the easy fact
\begin{equation*}
C=C\cap Z=C\cap \left( \bigcup_{i=1}^{4}Z_{i}\right)
=\bigcup_{i=1}^{4}(C\cap Z_{i})=\bigcup_{i=1}^{4}C_{i}.
\end{equation*}

To prove $C\subset Z$\textbf{, }for each positive integer $N,$ choosing
\begin{equation*}
f_N(k)=
\begin{cases}
1,&\text{for~}|k|\leq 2N
\\
0,&\text{otherwise},
\end{cases}
\end{equation*}
we then have
\begin{equation*}
\begin{split}
\sum_{j\in \mathbb{Z}^n, j\neq k}\frac{f_N(j)}{|k-j|^{n-\lambda}}
\geq &
\sum_{|j|\leq 2N, j\neq k}\frac{f_N(j)}{|k-j|^{n-\lambda}}
\\
\geq &
\sum_{|j|\leq 2N, j\neq k}\frac{1}{|k-j|^{n-\lambda}}
\\
\geq &
\sum_{|j|\leq N, j\neq 0}\frac{1}{|j|^{n-\lambda}}\sim N^{\lambda}
\end{split}
\end{equation*}
for $|k|\leq N$.
So we get
\begin{equation*}
\|\mathcal {I}_{\lambda}(f_N)\|_{l(q,s)}\gtrsim N^{\lambda}\bigg(\sum_{|k|\leq N}\langle k\rangle^{sq}\bigg)^{{1}/{q}}.
\end{equation*}
By the similar argument used in Subsection 3.2, we can use the inequalities
\begin{equation*}
N^{\lambda}\bigg(\sum_{|k|\leq N}\langle k\rangle^{sq}\bigg)^{{1}/{q}}
\lesssim \|f_N\|_{l(p,t)}
\lesssim \bigg(\sum_{|k|\leq 2N}\langle k\rangle^{tp}\bigg)^{{1}/{p}}
\end{equation*}
to deduce
\begin{equation*}
\frac{\lambda}{n}+\Big(\frac{1}{q}+\frac{s}{n}\Big)\vee 0\leq \Big(\frac{1}{p}+\frac{t}{n}\Big)\vee 0.
\end{equation*}
On the other hand,
for nonnegative $f$, we use
\begin{equation*}
(\mathcal {I}_{\lambda}f)(k)=\sum_{j\in \mathbb{Z}^n, j\neq k}\frac{f(j)}{|k-j|^{n-\lambda}}\gtrsim f(k-1),
\end{equation*}
to deduce
\begin{equation*}
\|f\|_{l(q,s)}\lesssim \|\mathcal {I}_{\lambda}(f)\|_{l(q,s)}\lesssim \|f\|_{l(p,t)},
\end{equation*}
which implies
\begin{equation*}
l(p, t)\subset l(q, s).
\end{equation*}
Finally, H\"{o}lder's inequality yields that
\begin{equation*}
\sum_{k\in \mathbb{Z}^n}\sum_{j\in \mathbb{Z}^n, j\neq k}\frac{a_j}{|k-j|^{n-\lambda}}c_k
\lesssim \|\{a_k\}\|_{l(p,t)}\|\{c_k\}\|_{l(q',-s)}.
\end{equation*}
Now we take
$a_0=1$ and $a_k=0~(k \neq 0)$
to obtain
\begin{equation*}
\sum_{k\in \mathbb{Z}^n, k\neq 0}\frac{c_k}{|k|^{n-\lambda}}
\lesssim
\|\{c_k\}\|_{l(q',-s)},
\end{equation*}
which clearly implies
\begin{equation*}
l(q',-s)\subset l(1, \lambda-n).
\end{equation*}

Then, we take
$c_0=1$ and $c_k=0~(k \neq 0)$
to obtain
\begin{equation*}
l(p,t)\subset l(1, \lambda-n).
\end{equation*}
The other conditions of $Z$ then can be verified by the above three embedding
relations and Proposition \ref{Sharpness of embedding, discrete form}.

Next, we turn to show $C\cap Z_{j}=C_{j}$ for $j=1,2,3,4$.

First, to prove $C\cap Z_{1}=C_{1}$, we only need to show
that
\begin{equation*}
(q,p,s,t)\in C_{1}\Longrightarrow \mathcal{I}_{\lambda }:l(p,t)\rightarrow
l(q,s).
\end{equation*}
We divide this part of proof into two cases.\\
\textbf{Case 1:} ${\lambda}/{n}+{1}/{q}+{s}/{n}<1$.
Using a similar argument as used in Subsection 3.3, we only need to consider the case $s\geq 0$.
Thus we have
\begin{equation*}
\begin{cases}
0\leq s\leq t, \\
1+\frac{1}{q}+\frac{s}{n}<\frac{1}{p}+\frac{t}{n}+\left(1-\frac{\lambda}{n}\right), \\
\frac{1}{q}+\frac{s}{n}<1-\frac{\lambda}{n},~\frac{1}{q}+\frac{s}{n}<\frac{1}{p}+\frac{t}{n},~1<\frac{1}{p}+\frac{t}{n}+\left(1-\frac{\lambda}{n}\right).
\end{cases}
\end{equation*}
We can choose $q_1\in [1,\infty]$ such that
\begin{equation*}
\frac{1}{q}+\frac{s}{n}<\frac{1}{q_1}+\frac{s}{n}<1-\frac{\lambda}{n},
\end{equation*}
and
\begin{equation*}
\begin{cases}
0\leq s\leq t \\
1+\frac{1}{q}+\frac{s}{n}<\frac{1}{p}+\frac{t}{n}+\frac{1}{q_1}+\frac{s}{n} \\
\frac{1}{q}+\frac{s}{n}<\frac{1}{q_1}+\frac{s}{n},~\frac{1}{q}+\frac{s}{n}<\frac{1}{p}+\frac{t}{n},~1<\frac{1}{p}+\frac{t}{n}+\frac{1}{q_1}+\frac{s}{n}.
\end{cases}
\end{equation*}
By the fact
\begin{equation*}
\{\langle k\rangle^{\lambda-n}\}_{k\in \mathbb{Z}^n}\in l(q_1,s),
\end{equation*}
we use Theorem \ref{Sharpness of weighted Young's inequality, discrete form} to deduce
\begin{equation*}
\begin{split}
\|\mathcal {I}_{\lambda}(f)\|_{l(q,s)}\lesssim &
\|\{\langle k\rangle^{\lambda-n}\}\ast f\|_{l(q,s)}
\\
\lesssim&
\|\{\langle k\rangle^{\lambda-n}\}\|_{l(q_1,s)}\|f\|_{l(p,t)}
\lesssim
\|f\|_{l(p,t)}.
\end{split}
\end{equation*}
\textbf{Case 2:} $\frac{\lambda}{n}+\frac{1}{q}+\frac{s}{n}=1$.
In this case, we have $q=\infty, s=n-\lambda$.
\begin{equation*}
\begin{split}
\|\mathcal {I}_{\lambda}(f)\|_{l(\infty,n-\lambda)}= &
\sup_{k\in \mathbb{Z}^n}\left(\sum_{j\in \mathbb{Z}^n, j\neq k}\frac{f(j)}{|k-j|^{n-\lambda}}\langle k\rangle^{n-\lambda} \right)
\\
\lesssim &
\sup_{k\in \mathbb{Z}^n}\left(\sum_{j\in \mathbb{Z}^n}f(j)\langle j\rangle^t\frac{\langle k\rangle^{n-\lambda}}{\langle k-j\rangle^{n-\lambda}\langle j\rangle^t} \right)
\\
\lesssim &
\sup_{k\in \mathbb{Z}^n}\left\|\frac{\langle k\rangle^{n-\lambda}}{\langle k-j\rangle^{n-\lambda}\langle j\rangle^t}\right\|_{l_j^{p'}}\|f\|_{l(p,t)}.
\end{split}
\end{equation*}
Observing that
\begin{equation*}
0<n-\lambda=s\leq t
\end{equation*}
and
\begin{equation*}
1=\frac{\lambda}{n}+\frac{1}{q}+\frac{s}{n}<\frac{1}{p}+\frac{t}{n}\Rightarrow p'>\frac{n}{t},
\end{equation*}
we use Proposition \ref{Integral capability of weight, discrete form} to deduce
\begin{equation*}
\sup_{k\in \mathbb{Z}^n}\left\|\frac{\langle k\rangle^{n-\lambda}}{\langle k-j\rangle^{n-\lambda}\langle j\rangle^t}\right\|_{l_j^{p'}}\lesssim 1.
\end{equation*}
The desired conclusion $C\cap Z_{1}=C_{1}$ then follows.

Next, we claim $C\cap Z_{2}=\emptyset $.
In fact, we have a special embedding relationship in this case.
Firstly, we have ${1}/{p}+{t}/{n}={\lambda}/{n}$, then $p=1, t=\lambda-n$.
For positive integer $N,$ define
\begin{equation*}
a_{k,N}=
\begin{cases}
1,&\text{for~}|k|\leq 2N,\\
0,&\text{otherwise}.
\end{cases}
\end{equation*}
It is easy to check that
\begin{equation*}
\|\{a_{k,N}\}\|_{l(p,t)}\sim N^{{n}/{p}+t}=N^{\lambda}
\end{equation*}
and
\begin{equation*}
\begin{split}
\sum_{k\in \mathbb{Z}^n}\sum_{j\in \mathbb{Z}^n, j\neq k}\frac{a_{j,N}}{|k-j|^{n-\lambda}}c_k
\gtrsim &
\sum_{|k|\leq N}c_k\sum_{|j|\leq 2N, j\neq k}\frac{a_{j,N}}{|k-j|^{n-\lambda}}
\\
\gtrsim &
\sum_{|k|\leq N}c_k\sum_{|j|\leq N, j\neq 0}\frac{1}{|j|^{n-\lambda}}
\\
\gtrsim &
N^{\lambda}\sum_{|k|\leq N}c_k.
\end{split}
\end{equation*}
Then we use the assumption
\begin{equation*}
\sum_{k\in \mathbb{Z}^n}\sum_{j\in \mathbb{Z}^n, j\neq k}\frac{a_{j,N}}{|k-j|^{n-\lambda}}c_k
\lesssim
\|\{a_{k,N}\}\|_{l(p,t)}\|\{c_k\}\|_{l(q',-s)}
\end{equation*}
to deduce that
\begin{equation*}
N^{\lambda}\sum_{|k|\leq N}c_k
\lesssim
N^{\lambda}\|\{c_k\}\|_{l(q',-s)}
\end{equation*}
as $N\rightarrow \infty$. It gives the embedding
\begin{equation*}
l(q',-s)\subset l(1,0).
\end{equation*}
Using the fact ${1}/{q'}+{(-s)}/{n}=1$ and Proposition \ref{Sharpness of embedding, discrete form}, we obtain
\begin{equation*}
q=\infty, s=0,
\end{equation*}
which contradicts the fact
\begin{equation*}
t=\lambda-n, s\leq t.
\end{equation*}

To prove $C\cap Z_{3}=C_{3}$,\textbf{\ }we first easily see that
\begin{equation*}
C\cap Z_{3}\subset C_{3}
\end{equation*}
by the fact that $\frac{\lambda }{n}=\frac{1}{p}+\frac{t}{n}$, then $
p=1,t=\lambda -n.$

To verify $C_3\subset C\cap Z_3$, it is sufficient to show
\begin{equation*}
(q, p, s, t)\in C_3\Longrightarrow \mathcal {I}_{\lambda}: l(p,t) \rightarrow l(q,s).
\end{equation*}
However, this conclusion can be reduced to the $\mathcal{C}_{1}$ condition
case by a dual argument. In fact, $(p',q',-t,-s)$ belongs to $\mathcal {C}_1$.
It implies that
\begin{equation*}
  \mathcal {I}_{\lambda}: l(q',-s) \rightarrow l(p',-t).
\end{equation*}
The desired conclusion $\mathcal {I}_{\lambda}: l(p,t) \rightarrow l(q,s)$ follows by a dual argument.

Our last step is to show $C\cap Z_{4}=C_{4}$.

To prove
\begin{equation*}
C\cap Z_4\subset C_4,
\end{equation*}
we first verify
\begin{equation*}
(q, p, s, t)\subset C\cap Z_4
\Longrightarrow
\frac{\lambda}{n}+\frac{1}{q}+\frac{s}{n}\neq 1.
\end{equation*}
In fact, if ${\lambda}/{n}+{1}/{q}+{s}/{n}=1$, then we have
\begin{equation*}
\frac{1}{p'}+\frac{-t}{n}=0,~\frac{1}{q'}+\frac{-s}{n}=\frac{\lambda}{n}
\end{equation*}
which can be reduced to the proof of $C\cap Z_2= \emptyset$ by a dual argument.

Second, we need to show
\begin{equation*}
(q, p, s, t)\subset C\cap Z_4
\Longrightarrow
\frac{1}{q}\leq \frac{1}{p}.
\end{equation*}
We define
\begin{equation*}
f_N(k)=
\begin{cases}
\langle k \rangle^{-t-{n}/{p}},&\text{for~}|k|\leq N,
\\
0,&\text{otherwise}.
\end{cases}
\end{equation*}
It is easy to check that
\begin{equation*}
\|\{f_N(k)\}\|_{l(p,t)}=(\ln N)^{{1}/{p}}.
\end{equation*}
On the other hand
\begin{equation*}
\begin{split}
\|\mathcal {I}_{\lambda}(f_N)\|_{l(q,s)}= &
\left(\sum_{k\in \mathbb{Z}^n}\bigg(\sum_{j\in \mathbb{Z}^n, j\neq k}\frac{f_N(j)}{|k-j|^{n-\lambda}} \bigg)^q\langle k\rangle^{sq}\right)^{{1}/{q}}
\\
\gtrsim  &
\left(\sum_{|k|\leq |N|}\bigg(\sum_{|j-k|\leq \frac{|k|}{2}, j\neq k}\frac{f_N(j)}{|k-j|^{n-\lambda}} \bigg)^q\langle k\rangle^{sq}\right)^{{1}/{q}}
\\
\gtrsim  &
\left(\sum_{k\leq |N|}\bigg(\sum_{|j-k|\leq \frac{|k|}{2}, j\neq k}\frac{\langle k \rangle^{-t-\frac{n}{p}}}{|k-j|^{n-\lambda}} \bigg)^q\langle k\rangle^{sq}\right)^{{1}/{q}}
\\
\gtrsim  &
\left(\sum_{k\leq |N|}\bigg(\langle k \rangle^{\lambda}\langle k \rangle^{-t-\frac{n}{p}} \bigg)^q\langle k\rangle^{sq}\right)^{{1}/{q}}
\sim
(\ln N)^{{1}/{q}}.
\end{split}
\end{equation*}
Then, we deduce the inequality
\begin{equation*}
(\ln N)^{{1}/{q}}\lesssim (\ln N)^{{1}/{p}},
\end{equation*}
which implies the desired conclusion $1/q\leqslant 1/p$.\\
Finally, we want show that
\begin{equation*}
(q, p, s, t)\in C\cap Z_4\cap \left\{(q, p, s, t)\in Z_4: ~s=t \right\}
\end{equation*}
implies $q\neq \infty, p\neq 1.$

If $s=t, q=\infty$, we have
\begin{equation*}
\frac{\lambda}{n}=\frac{1}{p}.
\end{equation*}
For any $N\in \mathbb{Z}^n$, we define
\begin{equation*}
g_N(k)=
\frac{\langle k\rangle^{-t}}{\langle N-k\rangle^{\lambda}}\left(1+\ln\langle N-k\rangle\right)^{\epsilon}
\end{equation*}
for all $k\in \mathbb{Z}^n$, where
$\ \epsilon\ $
is a real number that satisfies $p\epsilon<-1$ and $\epsilon\geq -1$.
Now
\begin{equation*}
\begin{split}
\|\mathcal {I}_{\lambda}(g_N)\|_{l(\infty,s)}= &
\sup_{k\in \mathbb{Z}^n}\bigg(\sum_{j\in \mathbb{Z}^n, j\neq k}\frac{g_N(j)}{|k-j|^{n-\lambda}}\langle k\rangle^{s} \bigg)
\\
\gtrsim  &
\sum_{|j-N|\leq \frac{|N|}{2}, j\neq k}\frac{g_N(j)}{|N-j|^{n-\lambda}}\langle N\rangle^{s}
\\
\sim  &
\sum_{|j-N|\leq \frac{|N|}{2}, j\neq k}\frac{\langle j\rangle^{-t}}{\langle N-j\rangle^{\lambda}}\left(1+\ln\langle N-j\rangle\right)^{\epsilon}
\frac{1}{|N-j|^{n-\lambda}}\langle N\rangle^{s}
\\
\sim  &
\sum_{|j-N|\leq \frac{|N|}{2}, j\neq k}\frac{1}{\langle N-j\rangle^{n}}\left(1+\ln\langle N-j\rangle\right)^{\epsilon}
\rightarrow \infty
\end{split}
\end{equation*}
as $N\rightarrow \infty.$ This contradicts the fact that
\begin{equation*}
\|g_N\|_{l(p,t)}\lesssim 1
\end{equation*}
uniformly as $N\rightarrow \infty.$

The case $s=t, p=1$ can be handled by a dual argument.

For the proof of
\begin{equation*}
C_4\subset C\cap Z_4,
\end{equation*}
we only need to show
\begin{equation*}
(q, p, s, t)\in C_4\Longrightarrow \mathcal {I}_{\lambda}: l(p,t) \rightarrow l(q,s).
\end{equation*}
Based on the complete result (see Theorem 5 in \cite{Strichartz}),
using an adaptation of the implication method to the setting of boundedness of fractional integrals,
we get this conclusion. In the endpoint case $p=1$ or $q=\infty$, we can also use Proposition \ref{Integral capability of weight, discrete form}
to verify this conclusion, just like we handled the same case in the proof of Theorem \ref{Sharpness of weighted Young's inequality, discrete form}.

\subsection{Proof of Theorem \ref{Sharpness of fractional integration operator, power weight}}
Since the sufficiency has been obtained by Strichartz \cite{Strichartz},
we only show that
\begin{equation*}
I_{\lambda}: \mathbb{L}(p,t) \rightarrow \mathbb{L}(q,s) \Longrightarrow (q, p, s, t)\in C_4 .
\end{equation*}
By a dilation argument, we obtain
\begin{equation}\label{for proof, 6}
\frac{\lambda}{n}+\frac{1}{q}+\frac{s}{n}=\frac{1}{p}+\frac{t}{n}.
\end{equation}
Hence, the only thing that we need to clarify is:
\begin{equation*}
\frac{1}{q}+\frac{s}{n}\geq 0.
\end{equation*}
Set
\begin{equation*}
f(x)=\chi_{B(2,1)}(x).
\end{equation*}
Then we have
\begin{equation*}
\|f\|_{\mathbb{L}(p,t)}\lesssim 1.
\end{equation*}
On the other hand,
\begin{equation*}
\begin{split}
\|I_{\lambda}(f)\|_{\mathbb{L}(q,s)}
\gtrsim &
\|I_{\lambda}(f)\chi_{B(0,1)}\|_{\mathbb{L}(q,s)}
\\
\gtrsim &
\|\chi_{B(0,1)}\|_{\mathbb{L}(q,s)}.
\end{split}
\end{equation*}
So we should have
\begin{equation*}
\|\chi_{B(0,1)}(x)\|_{\mathbb{L}(q,s)}\lesssim 1,
\end{equation*}
which implies
\begin{equation}\label{for proof, 7}
\frac{1}{q}+\frac{s}{n}\geq 0.
\end{equation}
According to Theorem \ref{Sharpness of fractional integration operator, discrete form}, we can use an implication argument (as Proposition
\ref{Implication method}) to deduce
\begin{equation*}
(q, p, s, t)\in \bigcup_{i=1,2,4}C_i .
\end{equation*}
Combining (\ref{for proof, 6}) with (\ref{for proof, 7}), we obtain
\begin{equation*}
(q, p, s, t)\in C_4 .
\end{equation*}

Now, we complete the proof of Theorem \ref{Sharpness of fractional integration operator, power weight}.

\subsection*{Acknowledgements}
The authors sincerely appreciate the anonymous referee for checking this paper very carefully
and giving very detailed comments, which greatly improved this article.
This work was supported by the National Natural Sciences Foundation of China (Nos. 11371295, 11471041, 11471288, 11671414 and 11601456).

\end{document}